\documentclass[11pt]{article}
\usepackage{latexsym}
\topmargin 0.25in
\textheight 8in
\textwidth 6.25in
\evensidemargin 0in
\oddsidemargin 0in

\usepackage{graphicx} 
\usepackage{color} 
\usepackage{amsmath, amsthm, amssymb}
\usepackage{enumerate}
\usepackage{float}
\usepackage{url}
\usepackage{stackrel}
\usepackage{mathrsfs,dsfont}
\usepackage{subcaption}
\usepackage{wrapfig}
\usepackage{bbm}
\usepackage{adjustbox}

\usepackage{tikz}
\usetikzlibrary{calc,arrows}

\usepackage[font=scriptsize]{caption}

\usepackage{chemfig,siunitx}
\setcompoundsep{7em} 

\newcommand{\invtPoly}{\mathcal{P}} 


\newcommand{\Rplus}{\mathbb{R}_{>0}}

\newtheorem{theorem}{Theorem}[section]

\newtheorem{proposition}[theorem]{Proposition}
\newtheorem{lemma}[theorem]{Lemma}
\newtheorem{example}[theorem]{Example}

\newtheorem{definition}[theorem]{Definition}

\newtheorem{remark}[theorem]{Remark}


\newcommand{\be}{\begin{equation}}
\newcommand{\ee}{\end{equation}}
\newcommand{\bd}{\begin{displaymath}}
\newcommand{\ed}{\end{displaymath}}
\newcommand{\been}{\begin{enumerate}}
\newcommand{\enen}{\end{enumerate}}
\newcommand{\beit}{\begin{itemize}}
\newcommand{\enit}{\end{itemize}}
\def\ba#1\ea{\begin{align}#1\end{align}}

\def\UUU{\mathfrak U}
\def\FFF{\mathfrak F}

\def\fff{\mathfrak f}
\def\SS{\mathcal S}
\def\CC{\mathcal C}
\def\RR{\mathcal R}

\def\S{\mathbb S}

\def\va{{\vect a}}
\def\vb{{\vect b}}
\def\vu{{\vect u}}
\def\vv{{\vect v}}
\def\vy{{\vect y}}
\def\vx{{\vect x}}
\def\vm{{\vect m}}
\def\vk{{\vect k}}
\def\valpha{{\boldsymbol \alpha}}
\def\v0{{\vect 0}}

\def\Net{(\SS,\CC,\RR)}

\def\TD{{\mathfrak T}_{RNDB}}
\def\TS{{\mathfrak T}_{MCDB}}

\def\Om{\Omega}

\newcommand{\R}{\mathbb{R}}

\newcommand{\Z}{\mathbb{Z}}


\def\ra{\rightarrow}
\def\lra{\leftrightarrow}

\def\rlas{\rightleftarrows}
\def\ds{\displaystyle}

\providecommand{\abs}[1]{\lvert#1\rvert}

\def\vect{\bf }

\newcommand{\specialcell}[2][c]{\begin{tabular}[#1]{@{}c@{}}#2\end{tabular}}

\begin{document}

\title{
A detailed balanced reaction network is sufficient but not necessary for its Markov chain to be detailed balanced}
\author{Badal Joshi}
\date{}

\maketitle

\begin{abstract}
Certain chemical reaction networks (CRNs) when modeled as a deterministic dynamical system taken with mass-action kinetics have the property of reaction network detailed balance (RNDB) which is achieved by imposing network-related constraints on the reaction rate constants. Markov chains (whether arising as models of CRNs or otherwise) have their own notion of detailed balance, imposed by the network structure of the graph of the transition matrix of the Markov chain. When considering Markov chains arising from chemical reaction networks with mass-action kinetics, we will refer to this property as Markov chain detailed balance (MCDB). Finally, we refer to the  stochastic analog of RNDB as Whittle stochastic detailed balance (WSDB). It is known that RNDB and WSDB are equivalent. We prove that WSDB and MCDB are also intimately related but are not equivalent. While RNDB implies MCDB, the converse is not true. The conditions on rate constants that result in networks with MCDB but without RNDB are stringent, and thus examples of this phenomenon are rare, a notable exception is a network whose Markov chain is a birth and death process. We give a new algorithm to find conditions on the rate constants that are required for MCDB. 

~\\ \vskip 0.02in
{\bf Keywords:} detailed balance, chemical reaction networks, stochastic models, stationary distribution
\end{abstract}

\section{Introduction} \label{sec:introduction}

\begin{figure}
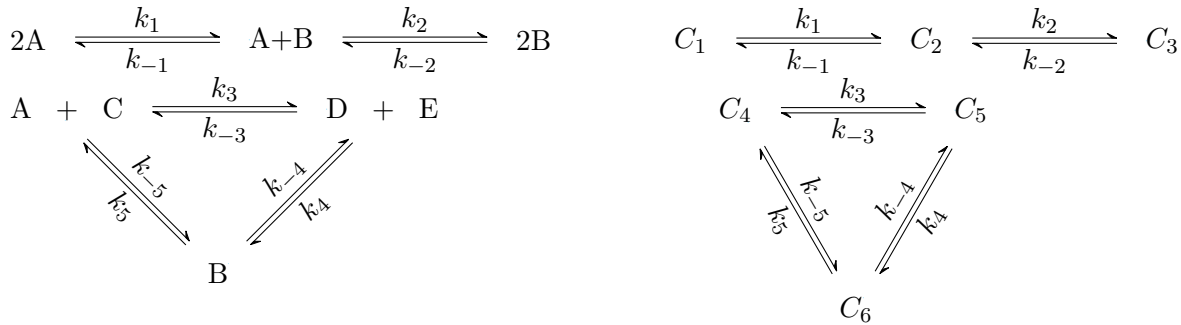

\begin{subfigure}[b]{0.30\textwidth}
\centering
\schemestart
 2A 
 \arrow{<=>[$k_{1}$][$k_{-1}$]}
 A+B 
  \arrow{<=>[$k_{2}$][$k_{-2}$]}
 2B 
 \schemestop
 
 \schemestart
 A \+ C
 \arrow{<=>[$k_{3}$][$k_{-3}$]}
  D \+ E 
 \arrow{<=>[$k_{-4}$][$k_{4}$]}[225]  
 B 
  \arrow{<=>[$k_{-5}$][$k_{5}$]}[135] 
\schemestop
\end{subfigure}
\hspace{1.5in}
\begin{subfigure}[b]{0.30\textwidth}
\centering
\schemestart
$C_1$
 \arrow{<=>[$k_{1}$][$k_{-1}$]}
$C_2$
  \arrow{<=>[$k_{2}$][$k_{-2}$]}
$C_3$
 \schemestop
 
\schemestart
$C_4 $
 \arrow{<=>[$k_{3}$][$k_{-3}$]}
$ C_5$
 \arrow{<=>[$k_{-4}$][$k_{4}$]}[240]  
$C_6$
  \arrow{<=>[$k_{-5}$][$k_{5}$]}[120] 
\schemestop
\end{subfigure}
\caption{Example of a reversible chemical reaction network with 5 chemical species ($\{A,B,C,D,E\}$), 6 complexes ($\{2A, A+B, 2B, A+C, D+E, B \}$) and 5 reversible reaction pairs (edges of the network). On the right, we depict the same reaction network as on the left, but we suppress the species composition of the complexes. Some of the conditions for detailed balance only depend on the network structure shown on the right, while other conditions require the knowledge of the constituents of the complexes $C_i$.} 
\label{fig:crn_ex}
\end{figure}

The concept of detailed balance (see \cite{aris1965prolegomena, aris1968prolegomena, feinberg1989necessary, horn1972general}) arose early in the history of chemical reaction network theory (see for instance \cite{FeinLectures, gunawardena2003chemical, horn1972general}). As an example, consider the reversible chemical reaction network (CRN) in figure \ref{fig:crn_ex}. A CRN is prescribed a kinetic scheme (such as mass-action kinetics) which describes through a system of ODEs the dynamics of variables, in this case the time-dependent concentrations of the chemical species. For a reversible reaction network (a reaction network is reversible if every reaction is reversible), the kinetic scheme assigns a unique function to each reaction pair. For instance, mass-action kinetics assigns the binomial $k_1c_A^2 - k_{-1} c_A c_B$ to the reaction pair $2A \stackrel[k_{-1}]{k_1}{\rlas} A+B$, where $c_A$ and $c_B$ represent the concentrations of the species $A$ and $B$ respectively. An equilibrium of the dynamical system is said to be detailed balanced, if at the steady state concentration, the function corresponding to each reaction pair vanishes. Thus, if $c_A^*$ and $c_B^*$ represent the equilibrium concentrations of the species $A$ and $B$ respectively, then one of the conditions required for the mass-action equilibrium to be detailed balanced is that $k_1c_A^{*2} = k_{-1} c_A^* c_B^*$. The full set of conditions arising from the five reaction pairs in the network in figure \ref{fig:crn_ex} is as follows:
\begin{align} \label{eq:crn_ex}
k_1c_A^{*2} &= k_{-1} c_A^* c_B^* \nonumber \\
k_2 c_A^*c_B^* &= k_{-2} c_B^{*2} \nonumber \\
k_3 c_A^* c_C^* &= k_{-3} c_D^* c_E^* \nonumber \\
k_4 c_D^* c_E^* &= k_{-4} c_B^* \nonumber \\
k_5 c_B^* &= k_{-5} c_A^* c_C^*
\end{align}
Assuming that a detailed balanced equilibrium exists for this network, the set of conditions \eqref{eq:crn_ex} can be simultaneously satisfied only if the reaction rate constants are appropriately constrained. Some algebra then leads us to identify the constraints as follows
\begin{align} 
\frac{k_1}{k_{-1}} &= \frac{k_2}{k_{-2}} \label{eq:crn_ex_cons1} \\
k_3 k_4 k_5 &= k_{-3} k_{-4} k_{-5} \label{eq:crn_ex_cons2}
\end{align}
Note that the second constraint \eqref{eq:crn_ex_cons2} does not require the precise knowledge of the species constituents of the complexes, but only the fact that the triplet of complexes $\{C_4 := A+C, C_5 := D+E, C_6:=B\}$ forms a cycle as is evident in the depiction on the right of figure \ref{fig:crn_ex}. On the other hand, \eqref{eq:crn_ex_cons1} cannot be obtained without knowing that $C_1=2A$, $C_2=A+B$ and $C_3=2B$. The set of constraints on the rate constants, such as the one in \eqref{eq:crn_ex_cons1} and \eqref{eq:crn_ex_cons2}, is not only necessary but also sufficient in the sense that if the constraints are satisfied for a reversible CRN with mass-action kinetics, then every equilibrium of the CRN is detailed balanced. Thus we say that the CRN is detailed balanced when its equilibria are detailed balanced or equivalently when the rate constants satisfy the appropriate constraints, called {\em circuit conditions} in \cite{feinberg1989necessary}. If a CRN has this form of detailed balance, then we will say that the CRN satisfies Reaction Network Detailed Balance (RNDB).

\begin{figure}[h!]
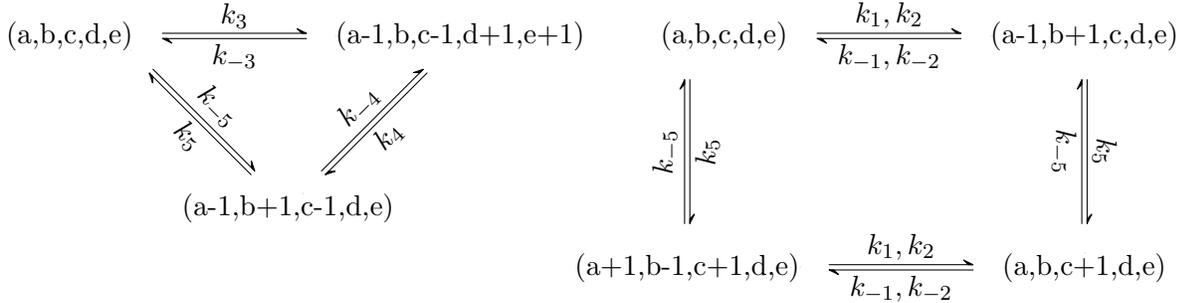

\begin{subfigure}[b]{0.30\textwidth}
\centering

 \schemestart
 (a,b,c,d,e)
 \arrow(1--2){<=>[$k_{3}$][$k_{-3}$]}
 (a-1,b,c-1,d+1,e+1)
 \arrow(2--3){<=>[$k_{-4}$][$k_{4}$]}[-135]  
 (a-1,b+1,c-1,d,e)
\arrow(3--4){<=>[$k_{-5}$][$k_{5}$]}[135]  
\schemestop
\end{subfigure}
\hspace{1in}
\begin{subfigure}[b]{0.30\textwidth}
\centering
 \schemestart
 (a,b,c,d,e)
 \arrow(1--2){<=>[$k_{1}, k_2$][$k_{-1}, k_{-2}$]}
 (a-1,b+1,c,d,e)
 \arrow(2--3){<=>[$k_5$][$k_{-5}$]}[-90]  
 (a,b,c+1,d,e)
\arrow(3--4){<=>[$k_{1},k_{2}$][$k_{-1},k_{-2}$]}[180] 
 (a+1,b-1,c+1,d,e)
\arrow(4--1){<=>[$k_{-5}$][$k_{5}$]}[90]  
\schemestop
\end{subfigure}
\caption{Two possible cycle types that arises in the graph of the Markov chain of the chemical reaction network depicted in figure \ref{fig:crn_ex}. The cycle type on the left owes its topology to the reaction cycle $\{A+C \lra D+E \lra B \lra A+C \}$. However, the cycle type on the right does not arise from a reaction cycle.} \label{fig:crn_ex_mc}
\end{figure}

An unrelated notion of detailed balance arose in the literature on continuous-time Markov chains, also in its early history \cite{durrett2010probability,kelly1979reversibility}. The equilibrium of a Markov chain (MC), if it exists, is a  stationary measure. A Markov chain is defined via the transition rates $\rho(x,y)$ between states $x$ and $y$. A stationary measure $\mu$ (which is a non-negative, countably additive function that satisfies $\sum_{x} \mu(x) \rho(x,y) = \mu(y)$) is detailed balanced if every transition is reversible ($\rho(y,x) > 0$ if $\rho(x,y) > 0$ for all states $x$ and $y$) and $\mu(x) \rho(x,y) = \mu(y) \rho(y,x)$ for all states $x$ and $y$. A Markov chain has a detailed balanced stationary measure $\mu$ if and only if every transition is reversible and the Markov chain satisfies the {\em Kolmogorov cycle condition}, which states that the product of the transition rates over every cycle in the graph of the Markov chain is independent of the direction in which the cycle is traversed. Thus a Markov chain which satisfies the Kolmogorov cycle condition over every cycle can be considered a detailed balanced Markov chain.

Despite the fact that the two notions of detailed balance discussed in the previous paragraphs are unrelated, and the two arise independently in distinct contexts, there is a natural reason that they possess the same name. A CRN has a natural graph structure associated with it (see figure \ref{fig:crn_ex}) and so does a Markov chain. In the case of CRNs, the nodes are chemical complexes (which can be thought of as multisets of species, for instance $2A, A+B, 2B, A+C, D+E, B$ are complexes in \ref{fig:crn_ex}). The directed edges are the reactions, where mass-action kinetics provides a positive valued labeling to the edges called the reaction rate constant. For instance, mass-action kinetics associates the rate constant $k_2$ and the reaction rate $k_2 c_A c_B$ with the directed edge $A+B \stackrel[]{k_2}{\to} 2B$ in figure \ref{fig:crn_ex}. Detailed balance here refers to the property that at equilibrium (which is the fixed point of the dynamical system), the reaction rate for each reaction is balanced by the reaction rate for the corresponding reverse reaction. A Markov chain (in any setting, not necessarily that of CRN) has a natural graph structure associated with it as well. The nodes are the states and the directed edges are the transitions that occur at a positive rate. If at equilibrium (which is the stationary measure of the Markov chain), the flow $\mu(a) \rho(a,b)$ from state $a$ to state $b$ is balanced by the reverse flow $\mu(b) \rho(b,a)$ for every pair of states $\{a,b\}$ then we say the Markov chain is detailed balanced. Thus it is appropriate to consider the two kinds of detailed balance discussed here are homonyms, which acquire their precise definition from the context of the graphical structure on which they are defined.  

When we consider a stochastic model of a chemical reaction network (which is a continuous-time Markov chain), both graphical structures co-exist. Specifically, the graph structure of the CRN itself, such as the one depicted in figure \ref{fig:crn_ex}, is present in the background. The graph structure of the Markov chain is defined as follows: the nodes are elements of $\Z_{\ge 0}^s$ where the $i$-th component of each node represents the number of molecules of the $i$-th species. A directed edge $(a \to b)$ represents a positive rate of transition between nodes $a$ and $b$, which in turn exists if there is a reaction in the CRN that allows this transition. Thus the graph structure in the foreground of the stochastic model is the one associated with the Markov chain and it inherits some structural elements from the graph of the CRN. When considering a stochastic model of a CRN with mass-action kinetics, if the corresponding Markov chain has the property of detailed balance, we say that the CRN satisfies Markov chain detailed balance (MCDB). 

There are, however, some critical distinctions between the two graphs in the stochastic model of a CRN. Firstly, the nodes in the CRN graph are chemical complexes, while the nodes in the corresponding Markov chain represent the number of molecules of the species. Secondly, the graph of a CRN is finite, while the graph of the corresponding Markov chain usually has an infinite state space. Thirdly, some cycles in the Markov chain are inherited from the cycles in the CRN, but certain cycles arise in the Markov chain from a set of reactions that do not themselves form a cycle in the CRN. For instance, the cycle on the left in figure \ref{fig:crn_ex_mc} is clearly inherited from the reaction cycle $\{A+C \lra D+E \lra B \lra A+C \}$ in figure \ref{fig:crn_ex}, but there is no equivalent reaction cycle in figure \ref{fig:crn_ex} of the cycle on the right in figure \ref{fig:crn_ex_mc}.  

The question then emerges: does detailed balance in a Markov chain model of a CRN with mass-action kinetics have any relation with detailed balance in the ODE model of a CRN with mass-action kinetics? Having considered the distinctions mentioned in the previous paragraph, it seems rather surprising that RNDB and MCDB are even related. However, it appears to be a ``folk theorem" that the conditions for RNDB and MCDB are equivalent. The primary purpose of this article is to lay to rest  this false notion and establish that RNDB and MCDB are not equivalent. A possible reason for the misunderstanding arising in the first place, may be that a concept of stochastic detailed balance specific to CRNs (which is not the usual Markov chain detailed balance) was introduced by Whittle \cite{whittle1986systems} (see Definition \ref{def:wsdb}), as an analogue of RNDB. In order to disambiguate, we refer to this notion as Whittle Stochastic Detailed Balance (WSDB). Unfortunately, WSDB and MCDB are often confused with each other (see Theorem 4.5 in \cite{anderson2010product}). We emphasize that WSDB is not equivalent to MCDB for general CRNs, furthermore, the content of the latter is sensible for all Markov chains, while the former is specific to CRNs. The main result of this article may be summarized as follows:  \\
{\em For reversible reaction networks, 
\begin{align*}
\boxed{WSDB \iff RNDB \implies MCDB}
\end{align*}}

We show that the conditions on the rate constants required for RNDB imply MCDB, however the converse is not true in general. Strikingly however, in a vast majority of CRNs, the conditions for RNDB do coincide with those for MCDB, which is perhaps another reason that the two types of detailed balance are often confused. Despite this, it does not take much effort to construct an example of a CRN where RNDB and MCDB do not require identical conditions on the rate constants. Consider, for instance, the CRN $2A \lra A+B \lra 2B$, which satisfies MCDB independent of rate constants, however does not satisfy RNDB for almost any set of randomly chosen rate constants. We study this CRN and two of its variants in section \ref{sec:motexamples}. 

This article is organized as follows:  section \ref{sec:motexamples} presents certain examples of networks that motivate the main results of the paper; section \ref{sec:intro_CRN} provides an overview of chemical reaction network theory and mass-action kinetics in the context of deterministic and stochastic models of chemical reaction networks; section \ref{sec:detbal} describes detailed balance as a collection of ideas and then in specific contexts of deterministic and stochastic models of CRN; section \ref{sec:mainthm} elucidates the relation between reaction network detailed balance and Markov chain detailed balance, and states the main theorem that RNDB implies MCDB; section \ref{sec:applications} presents a new algorithm to find constraints on the rate constants that result in MCDB and gives an explicit formulation of the stationary distribution of the stochastic model when the network has reaction network detailed balance. 

\section{Motivational examples}\label{sec:motexamples}

We present some examples that will motivate the main results in this article. The first three networks are very similar to each other, however, the relation between RNDB and MCDB is quite different for the three networks. 


\begin{table}[h!]
\centering
 \begin{adjustbox}{max width=1\textwidth}
\begin{tabular}{|c || c |c | c |c|}
\hline
~ & Network & RNDB holds when & MCDB holds when & Relation  \\
\hline
\hline

1 & \specialcell{$0 \stackrel[k_{-1}]{k_1}{\rlas}  A , ~~ B \stackrel[k_{-4}]{k_4}{\rlas} 0$ \\ $2A \stackrel[k_{-2}]{k_2}{\rlas} A+B  \stackrel[k_{-3}]{k_3}{\rlas} 2B  $} 
& 
\specialcell{$\ds \frac{k_2}{k_{-2}} = \frac{k_3}{k_{-3}}$ \\   
$k_1 k_3 k_4 = k_{-1} k_{-3} k_{-4}$}   
& 
\specialcell{$\ds \frac{k_2}{k_{-2}} = \frac{k_3}{k_{-3}}$ \\   
$k_1 k_3 k_4 = k_{-1} k_{-3} k_{-4}$}   
&
RNDB $\iff$ MCDB \\
\hline

2 & \specialcell{$0 \stackrel[k_{-1}]{k_1}{\rlas}  A$  \\ $2A \stackrel[k_{-2}]{k_2}{\rlas} A+B  \stackrel[k_{-3}]{k_3}{\rlas} 2B $} 
& 
$\ds \frac{k_2}{k_{-2}} = \frac{k_3}{k_{-3}}$   
& 
$\ds \frac{k_2}{k_{-2}} = \frac{k_3}{k_{-3}}$   
&
RNDB $\iff$ MCDB \\

\hline
3 & \specialcell{$2A \stackrel[k_{-2}]{k_2}{\rlas} A+B  \stackrel[k_{-3}]{k_3}{\rlas} 2B $} 
& 
$\frac{k_2}{k_{-2}} = \frac{k_3}{k_{-3}}$   
& 
No conditions   
&
RNDB $\implies$ MCDB \\

\hline 
4 & \specialcell{$0 \stackrel[k_{-1}]{k_1}{\rlas} A, 2A \stackrel[k_{-2}]{k_2}{\rlas} 3A$ \\ 
$A \stackrel[k_{-3}]{k_3}{\rlas} A+B, 2B \stackrel[k_{-4}]{k_4}{\rlas} 3B$}
& 
\specialcell{$\frac{k_1}{k_{-1}} = \frac{k_2}{k_{-2}}$ \\ $\frac{k_3}{k_{-3}} = \frac{k_4}{k_{-4}}$} 
& 
$\frac{k_3}{k_{-3}} = \frac{k_4}{k_{-4}}$
&
RNDB $\implies$ MCDB \\

\hline
5 & \specialcell{$2A \stackrel[k_{-1}]{k_1}{\rlas} A+B  \stackrel[k_{-2}]{k_2}{\rlas} 2B$ \\ 
$A+C \stackrel[k_{-3}]{k_3}{\rlas} D+E  \stackrel[k_{-4}]{k_4}{\rlas}  B \stackrel[k_{-5}]{k_5}{\rlas}  A+C$} 
& 
\specialcell{$\frac{k_1}{k_{-1}} = \frac{k_2}{k_{-2}}$  \\
$k_3 k_4 k_5 = k_{-3} k_{-4} k_{-5}$ } 
& 
\specialcell{$\frac{k_1}{k_{-1}} = \frac{k_2}{k_{-2}}$  \\
$k_3 k_4 k_5 = k_{-3} k_{-4} k_{-5}$ } 
&
RNDB $\iff$ MCDB \\

\hline
6 & \specialcell{$0 \stackrel[k_{-1}]{k_1}{\rlas} A, 2A  \stackrel[k_{-2}]{k_2}{\rlas} 3A$ \\ 
$0 \stackrel[k_{-3}]{k_3}{\rlas} B, 2B \stackrel[k_{-4}]{k_4}{\rlas} 3B$} 
& 
\specialcell{$\frac{k_1}{k_{-1}} = \frac{k_2}{k_{-2}}$ \\ $\frac{k_3}{k_{-3}} = \frac{k_4}{k_{-4}}$} 
& 
No conditions
&
RNDB $\implies$ MCDB \\
\hline
\end{tabular}
\end{adjustbox}
\caption{For each example network: (1) we list the constraints on the rate constants of the chemical reaction network which results in detailed balance in deterministic system endowed with mass-action kinetics (abbreviated as RNDB), and (2) we list the constraints on the rate constants of the chemical reaction network which result in detailed balance in the Markov chain model when endowed with stochastic mass-action kinetics (abbreviated as MCDB). The examples always have RNDB $\implies$ MCDB and sometimes have RNDB $\iff$ MCDB. }
\label{tab:examples}
\end{table}

\subsection{Network 1}

Consider the following chemical reaction network
\begin{align} \label{eq:net1}
0 \stackrel[k_{-1}]{k_1}{\rlas}  A \quad, \quad 2A \stackrel[k_{-2}]{k_2}{\rlas} A+B  \stackrel[k_{-3}]{k_3}{\rlas} 2B   \quad, \quad  B \stackrel[k_{-4}]{k_4}{\rlas} 0
\end{align}

We wish to calculate conditions on the reaction rate constants which will ensure reaction network detailed balance (RNDB) (see section \ref{sec:RNDB} for definition) under the assumption of mass-action kinetics (see section \ref{sec:mak} for definition). RNDB is ensured if each reaction is reversible and if at steady state concentration the reaction rate of each forward reaction is equal to the reaction rate of the corresponding backward reaction. Applying this condition to \eqref{eq:net1}, we get the following set of constraints on the reaction rate constants:
\begin{align} \label{eq:net1cond1}
k_1 = k_{-1} c_A^*, ~ k_2 c_A^* = k_{-2} c_B^*, ~ k_3 c_A^* = k_{-3} c_B^*, ~ k_4 c_B^* = k_{-4}
\end{align}
where $c_A^*$ and $c_B^*$ represent the steady state concentrations of the chemical species $A$ and $B$, in other words concentrations which satisfy $\ds \frac{d}{dt}c_A^* = \frac{d}{dt}c_B^* = 0$. The conditions in \eqref{eq:net1cond1} can be satisfied simultaneously if and only if the following relations hold among the reaction rate constants:
\begin{align} \label{eq:net1cond2}
k_1 k_2 k_4 = k_{-1} k_{-2} k_{-4}, ~ k_1 k_3 k_4 = k_{-1} k_{-3} k_{-4}  
\end{align}
An alternate way to write these relations is as follows
\begin{align} \label{eq:net1cond3}
\frac{k_2}{k_{-2}} = \frac{k_3}{k_{-3}}, ~ 
k_1 k_3 k_4 = k_{-1} k_{-3} k_{-4}   
\end{align}
where the first relation is a spanning forest condition and the second relation is a circuit condition in the nomenclature of \cite{feinberg1989necessary}. 

\begin{figure}[h!]
\centering
\begin{tikzpicture}[scale=4]
\draw [<->, line width=2, red] (1,0) -- (0,1);
\draw [<->, line width=2, green] (0,0) -- (1,0);
\draw [<->, line width=2, blue] (0,1) -- (0,0);

\node [below left] at (0,0) {$(a,b)$};
\node [below right] at (1,0) {$(a+1,b)$};
\node [above left] at (0,1) {$(a,b+1)$};

\node [below] at (0.5,0) {$k_1 {\color{green}  ~\circlearrowleft} $};
\node [below] at (0.5,-0.1) {$k_{-1}(a+1) {\color{green} ~\circlearrowright} $};

\node [above right] at (0.4,0.6) {$k_2(a+1)a + k_3(a+1)b {\color{red} ~ \circlearrowleft} $};
\node [above right] at (0.6,0.4) {$k_{-2}a(b+1) + k_{-3}(b+1)b {\color{red} ~ \circlearrowright} $};

\node [left] at (0,0.6) {$k_4(b+1) {\color{blue} ~\circlearrowleft} $};
\node [left] at (0,0.4) {$k_{-4} {\color{blue} ~\circlearrowright} $};
\end{tikzpicture}
\caption{Rates of transition between nodes for the network $\left\{0 \stackrel[k_{-1}]{k_1}{\rlas}  A , 2A \stackrel[k_{-2}]{k_2}{\rlas} A+B  \stackrel[k_{-3}]{k_3}{\rlas} 2B   ,  B \stackrel[k_{-4}]{k_4}{\rlas} 0\right\}$. The diagram depicts one of the types of reaction cycles occurring in the graph of the Markov chain. $(a,b) \in \Z_{\ge 0}$ denotes the population numbers of the species $A$ and $B$. $\circlearrowleft$ indicates the rate of transition while going counterclockwise and $\circlearrowright$ indicates the rate of transition while going clockwise. For instance $k_1$ is the transition rate from $(a,b)$ to $(a+1,b)$ and $k_{-1}(a+1)$ is the transition rate from $(a+1,b)$ to $(a,b)$. To apply the Kolmogorov cycle condition, we simply set the product of the reaction rates on the path going clockwise $\circlearrowright$ equal to the product of the reaction rates on the path going counterclockwise $\circlearrowleft$. For this example, this results in $[k_1] [k_2(a+1)a + k_3(a+1)b] [k_4(b+1)] = [k_{-1}(a+1)] [k_{-2}a(b+1) + k_{-3}(b+1)b] [k_{-4}]$, which simplifies to $\ds \frac{k_1}{k_{-1}} \cdot \frac{k_2a + k_3b}{k_{-2}a + k_{-3}b} \cdot \frac{k_4}{k_{-4}} = 1$, a relation that holds for all $a \ge 1$ and for all $b \ge 1$.} \label{fig:kcc}
\end{figure}

We now wish to compare the results of the previous calculation with the  conditions on the reaction rate constants which ensure Markov chain detailed balance (MCDB) (see section \ref{sec:MCDB} for definition). Given a chemical reaction network with mass-action kinetics, within a stochastic framework, the state of the system at time $t$ is an element of $\Z_{\ge 0}^n$ representing the number of molecules of each of the $n$ chemical species at time $t$. Each instance of a chemical reaction changes the population numbers along the reaction vector corresponding to that reaction. This process is naturally modeled as a Markov chain which has a graph structure associated with it, where each directed edge corresponds to a transition that occurs at a non-zero rate. MCDB is ensured if the Kolmogorov cycle condition (KCC) holds. KCC is said to hold if each cycle in the graph of the Markov chain can be traversed in either direction, and for each cycle $C$ the product of the transition rates when traversing $C$ clockwise is equal to the product of transition rates when traversing $C$ counterclockwise. For the network in \eqref{eq:net1}, let $(a,b)$, which represents the number of molecules of species $A$ and species $B$ respectively, be the state of the system. For the cycle whose node set is $\{(a,b), (a+1,b), (a,b+1) \}$, KCC is equivalent to the following condition for all $a \ge 1$ and $b \ge 1$
\begin{align} \label{eq:net1kcc1}
\frac{k_1}{k_{-1}} \cdot \frac{k_2a + k_3b}{k_{-2}a + k_{-3}b} \cdot \frac{k_4}{k_{-4}} = 1
\end{align}
These conditions hold for all $a \ge 1$ and all $b \ge 1$ if and only if the following relations apply to the reaction rate constants:
\begin{align} \label{eq:net1kcc2}
k_1 k_2 k_4 = k_{-1} k_{-2} k_{-4}, ~ k_1 k_3 k_4 = k_{-1} k_{-3} k_{-4}  
\end{align}
a set of relations that is identical to \eqref{eq:net1cond2}. There are other cycle types in the graph of the Markov chain which we have not considered, for instance the cycle that is obtained by considering the transitions resulting from the reactions $\{0 \to A, 0 \to B, A \to 0, B \to 0 \}$ (see also figure \ref{fig:cycletypes}). However, it is straightforward to show that this cycle, or any other cycle, does not result in any new relations among the rate constants. Thus for the network in \eqref{eq:net1} taken with mass-action kinetics, the minimal set of relations among the rate constants that guarantees RNDB is identical to the set of relations that guarantees MCDB. In other words, one may say that RNDB is equivalent to MCDB for the network in \eqref{eq:net1}. 

\begin{figure}[h!]
\centering
\begin{tikzpicture}[scale=1.75]
\draw[help lines, dashed, line width=1] (0,0) grid (3,3);
\draw [<->, line width=1.5, black] (0,3.5) -- (0,0) -- (3.5,0);
\draw [<->, line width=1.25, red] (1,1) -- (0,2);
\draw [<->, line width=1.25, red] (2,1) -- (1,2);
\draw [<->, line width=1.25, red] (3,1) -- (2,2);
\draw [<->, line width=1.25, red] (1,2) -- (0,3);
\draw [<->, line width=1.25, red] (2,2) -- (1,3);
\draw [<->, line width=1.25, red] (3,2) -- (2,3);
\draw [<->, line width=1.25, red] (2,0) -- (1,1);
\draw [<->, line width=1.25, red] (3,0) -- (2,1);

\draw [<->, line width=1.25, green] (0,0) -- (1,0);
\draw [<->, line width=1.25, green] (1,0) -- (2,0);
\draw [<->, line width=1.25, green] (2,0) -- (3,0);
\draw [<->, line width=1.25, green] (0,1) -- (1,1);
\draw [<->, line width=1.25, green] (1,1) -- (2,1);
\draw [<->, line width=1.25, green] (2,1) -- (3,1);
\draw [<->, line width=1.25, green] (0,2) -- (1,2);
\draw [<->, line width=1.25, green] (1,2) -- (2,2);
\draw [<->, line width=1.25, green] (2,2) -- (3,2);
\draw [<->, line width=1.25, green] (0,3) -- (1,3);
\draw [<->, line width=1.25, green] (1,3) -- (2,3);
\draw [<->, line width=1.25, green] (2,3) -- (3,3);

\draw [<->, line width=1.25, blue] (0,1) -- (0,0);
\draw [<->, line width=1.25, blue] (0,2) -- (0,1);
\draw [<->, line width=1.25, blue] (0,3) -- (0,2);
\draw [<->, line width=1.25, blue] (1,1) -- (1,0);
\draw [<->, line width=1.25, blue] (1,2) -- (1,1);
\draw [<->, line width=1.25, blue] (1,3) -- (1,2);
\draw [<->, line width=1.25, blue] (2,1) -- (2,0);
\draw [<->, line width=1.25, blue] (2,2) -- (2,1);
\draw [<->, line width=1.25, blue] (2,3) -- (2,2);
\draw [<->, line width=1.25, blue] (3,1) -- (3,0);
\draw [<->, line width=1.25, blue] (3,2) -- (3,1);
\draw [<->, line width=1.25, blue] (3,3) -- (3,2);

\draw [cyan, ultra thick, dashed] (0.5,0.5) circle [radius=0.8];
\node [cyan] at (0.5,0.5) {Type 2};
\node [cyan] at (0.35,2.25) {Type 1};
\draw [cyan, ultra thick, dashed] (0.4,2.4) circle [radius=0.65];
\end{tikzpicture}
\caption{The graph of the Markov chain of a reaction network has multiple cycles.  For the network $\{0 \lra A, 0 \lra B, 2A \lra A+B \lra 2B\}$ there are infinitely many cycles. However, there are only two types of non-trivial cycles in the network. Type 1 cycle arises from the sequence of reactions $\{0 \to A, A+B \to 2B, B \to 0\}$ and its reverse sequence. Type 2 cycle arises from the sequence of reactions $\{0 \to A, 0 \to B, A \to 0, B \to 0 \}$ and its reverse sequence. Other than the cycle $(0,0) \lra (1,0) \lra (1,1) \lra (0,1)$, every other cycle in the graph of this network can be decomposed into Type 1 cycles.}
\label{fig:cycletypes}
\end{figure}
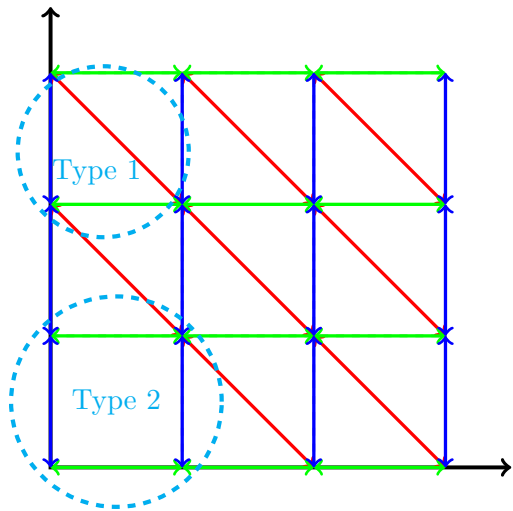

\subsection{Network 2}
Now consider the following chemical reaction network

\begin{align} \label{eq:net2}
2A \stackrel[k_{-2}]{k_2}{\rlas} A+B \stackrel[k_{-3}]{k_3}{\rlas} 2B
\end{align}
This network is similar to the network in \eqref{eq:net1}, except that the flow reactions $\{ 0 \lra A, ~ 0 \lra B \}$ are excluded. We calculate the conditions on the rate constants that ensure RNDB and the conditions on the rate constants that ensure MCDB. Using the same notation as in \eqref{eq:net1}, we find that for RNDB we need:
\begin{align} \label{eq:net2cond1}
k_2 c_A^* = k_{-2} c_B^*, ~ k_3 c_A^* = k_{-3} c_B^*
\end{align}
which implies for the rate constants
\begin{align} \label{eq:net2cond2}
\frac{k_2}{k_{-2}} = \frac{k_3}{k_{-3}} 
\end{align}
Figure \ref{fig:net2mc} depicts the graph of the Markov chain for the network in \eqref{eq:net2}. We note that there are no cycles in the graph, which means that the Kolmogorov cycle condition is vacuously satisfied which implies that MCDB holds irrespective of the values of the reaction rate constants, or indeed, MCDB holds independent of the type of kinetics. 

\begin{figure}[h!]
\centering
\begin{tikzpicture}[scale=1.25] 
\draw[help lines, dashed, line width=1] (0,0) grid (4,4);
\draw [<->, line width=1.5, black] (0,4.5) -- (0,0) -- (4.5,0);
\draw [<->, line width=2, yellow, dotted] (1,1) -- (0,2);
\node [below left, yellow] at (0.5,1.5) {$A+B \lra 2B$};
\draw [<->, line width=2, blue, dashed] (1,1) -- (2,0);
\node [below left, blue] at (1.5,0.5) {$2A \lra A+B$};
\node [above right, green] at (1.5,1.5) {$2A \lra A+B \lra 2B$};
\node [below left] at (0,0) {$(0,0)$};
\node [below] at (1,0) {$(1,0)$};
\node [below] at (2,0) {$(2,0)$};
\node [below] at (3,0) {$(3,0)$};
\node [below] at (4,0) {$(4,0)$};
\node [left] at (0,1) {$(0,1)$};
\node [left] at (0,2) {$(0,2)$};
\node [left] at (0,3) {$(0,3)$};
\node [left] at (0,4) {$(0,4)$};

\draw [<->, line width=2, yellow, dotted] (1,2) -- (0,3);
\draw [<->, line width=2, yellow, dotted] (1,3) -- (0,4);
\draw [<->, line width=2, blue, dashed] (2,1) -- (3,0);
\draw [<->, line width=2, blue, dashed] (3,1) -- (4,0);

\draw [<->, line width=2, green] (1,2) -- (2,1);
\draw [<->, line width=2, green] (1,3) -- (2,2);
\draw [<->, line width=2, green] (3,2) -- (2,3);
\draw [<->, line width=2, green] (3,1) -- (2,2);
\draw [<->, line width=2, green] (4,1) -- (3,2);
\draw [<->, line width=2, green] (4,2) -- (3,3);
\draw [<->, line width=2, green] (4,3) -- (3,4);
\draw [<->, line width=2, green] (2,3) -- (1,4);
\draw [<->, line width=2, green] (3,3) -- (2,4);
\end{tikzpicture}
\caption{The graph of the Markov chain for the network $\{2A \lra A+B \lra 2B \}$. The edges represent transitions that occur with positive probability. The yellow/dotted edges represent the transitions corresponding to the reversible reaction pair $A+B \lra 2B$, the blue/dashed edges represent the transitions corresponding to the reversible reaction pair $2A \lra A+B$, while the green/solid edges represent the transitions corresponding to both the reversible reaction pairs $2A \lra A+B \lra 2B$. Note that given an initial population $(a_0, b_0)$, the population $(a(t), b(t))$ at any future time $t$ is restricted to the line $a(t) + b(t) = a_0 + b_0$, thus the dynamics is restricted to one dimension. Since there are no cycles in the graph of the Markov chain, KCC holds vacuously implying that MCDB holds for the network irrespective of the values of the reaction rate constants.}
 \label{fig:net2mc}
\end{figure}
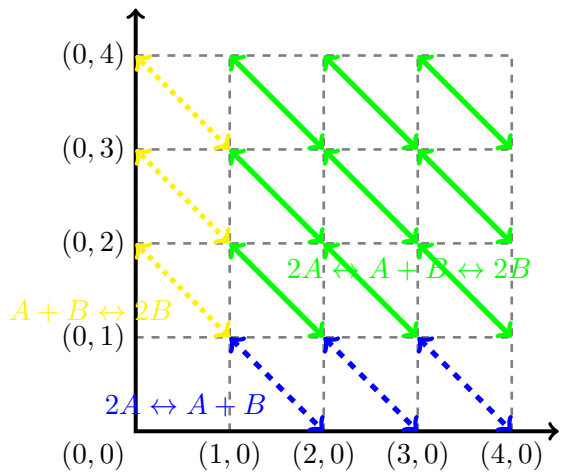

\subsection{Network 3}

On the other hand, consider the following network, which is ``halfway'' between the networks considered in \eqref{eq:net1} and \eqref{eq:net2}:
\begin{align} \label{eq:net1.5}
0 \stackrel[k_{-1}]{k_1}{\rlas}  A \quad, \quad 2A \stackrel[k_{-2}]{k_2}{\rlas} A+B  \stackrel[k_{-3}]{k_3}{\rlas} 2B   
\end{align}
While the flow reactions for species $A$ are present, the flow reactions for species $B$ are absent. For this network, the RNDB conditions are still the same as for the network \eqref{eq:net2}, $\frac{k_2}{k_{-2}} = \frac{k_3}{k_{-3}}$ and it turns out that the MCDB conditions are identical to the RNDB conditions. Thus including the flow reactions $0 \lra A$ does not result in any new conditions on the rate constants for RNDB. However, including the flow reactions $0 \lra A$ is sufficient to introduce cycles into the graph of the Markov chains, which introduces a new condition for MCDB, a condition that was already required for RNDB in the network \eqref{eq:net2} without flow reactions. Thus the introduction of the flow reactions for species $A$ is sufficient to make the conditions for RNDB and MCDB equivalent. 

This example might lead one to conjecture that the presence of cycles in the graph of a Markov chain is sufficient to guarantee that MCDB and RNDB are equivalent. However, the next example provides a counter to this claim since the graph of the Markov chain has cycles and the MCDB conditions are a nonempty, proper subset of the conditions for RNDB. 

\subsection{Network 4}
\begin{align}
0 \stackrel[k_{-1}]{k_1}{\rlas} A \quad, \quad 2A \stackrel[k_{-2}]{k_2}{\rlas} 3A  \quad, \quad
A \stackrel[k_{-3}]{k_3}{\rlas} A+B \quad, \quad 2B \stackrel[k_{-4}]{k_4}{\rlas} 3B
\end{align}
RNDB  requires that $\ds \frac{k_1}{k_{-1}} = \frac{k_2}{k_{-2}}$ and $\ds \frac{k_3}{k_{-3}} = \frac{k_4}{k_{-4}}$ while MCDB requires that $\ds \frac{k_3}{k_{-3}} = \frac{k_4}{k_{-4}}$. Thus the set of conditions required for MCDB is a nonempty, proper subset of the conditions required for RNDB.

These examples illustrate the main result stated in Theorem \ref{thm:main}, that reaction network detailed balance implies Markov chain detailed balance but the converse is not true in general. The results obtained in this section are summarized in Table \ref{tab:examples}.

\section{Chemical reaction network theory} \label{sec:intro_CRN}
\subsection{Introduction and basic definitions}

We begin with a review of the notation and basic definitions related to chemical reaction networks. An example of a {\em chemical reaction} is the following:
\begin{align} \label{eq:exreaction}
  X_{1}+ 2X_{2} ~\rightarrow~ X_2 + X_{3} ~
\end{align}
The $X_{i}$ are called chemical {\em species}, and $X_{1}+2X_{2}$ and
$X_2 + X_{3}$ are called chemical {\em complexes.}  For the reaction in \eqref{eq:exreaction}, ${\vect y} := X_{1}+2X_{2}$ is called the {\em reactant complex} and ${\vect y'} := X_2 + X_{3}$ is called the {\em product complex}, so we may rewrite the reaction as ${\vect y} \ra {\vect y'}$. We will find it convenient to think of the complexes as vectors, for instance, we may assign the reactant complex $X_{1}+2X_{2}$ to the vector $(1,2,0)$ and the product complex  $X_2 + X_{3}$ to the vector $(0,1,1)$. In other words, we are identifying the species $X_i$ with the canonical basis vector whose $i$-th component is $1$ and the other components are $0$. We let $s$ denote the total number of species in the network under consideration and we let $r$ represent the number of reactions, each reaction written as  ${\vect y}_i \rightarrow {\vect y}_i'$, for $i \in \{1,2,\dots,r\}$, and ${\vect y}_i, {\vect y}_i' \in \Z^s_{\ge 0}$, with ${\vect y}_i \ne {\vect y}_i'$. We index the entries of a 
complex vector ${\vect y}_i$ by writing ${\vect y}_i = \left( y_{i1}, y_{i2}, \dots, y_{is}\right) \in \Z^s_{\geq 0}$,
and we will call $y_{ij}$ the {\em stoichiometric coefficient} of species $j$ in 
complex ${\vect y}_i$.  
For ease of notation, when there is no need for
enumeration we typically will drop the subscript $i$ from the notation
for the complexes and reactions. We will reserve {\em boldface} fonts for vectors (usually of dimension $s$ -- the components of which refer to different species). In particular, we may sometimes need to use subscripts to denote a finite sequence of vectors, and so $({\vu_1}, \ldots, {\vu_c})$ will denote a sequence of vectors while $(u_1, \ldots, u_s)$ will denote components of a vector ${\vect u}$. The disambiguating notation is only out of abundance of caution, as the context will usually make the type of the object clear. 

The basic definitions and notations in this paper follow those in \cite{joshi2013complete,joshi2012simplifying, joshi2012atoms}; we start by defining chemical reaction networks. 

\begin{definition}   \label{def:crn}
  Let $\SS = \{X_i\}$, $\CC = \{ {\vect y}\},$ and $\RR = \{ {\vect y} \to  {\vect y'} | {\vect y'} \ne {\vect y}\}$ denote finite sets of species, complexes, and reactions, respectively.  The triple
  $\Net$ is called a {\em chemical reaction network} if it satisfies the following:
  \been
  	\item for each complex $ {\vect y} \in \CC$, there exists a reaction in $\RR$ for which $ {\vect y}$ is the reactant complex or $ {\vect y}$ is the product complex, and
	\item for each species $X_i \in \SS$, there exists a complex $ {\vect y} \in \CC$ that contains $X_i$.
  \enen
\end{definition}
For a chemical reaction network $\Net$, unless otherwise specified, we will denote the number of species by $s := \abs{\SS}$, the number of complexes by $n :=\abs{\CC}$ and the number of reactions by $r:= \abs{\RR}$. 
\begin{definition}
\been
\item
${\vect u}$ is the {\em reaction vector} of reaction $(\vy \ra \vy') \in \RR$ if ${\vect u} = {\vect y'} - {\vect y}$. We will say $\vu$ is a {\em reaction vector} if $\vu$ is the reaction vector of some reaction in $\RR$. Let $V(\RR)$ be the set of all reaction vectors, $V(\RR) := \{ {\vect y'} -  {\vect y} |  {\vect y} \ra  {\vect y'} \in \RR \}$. Let $v := \abs{V(\RR)}$ be the number of reaction vectors in $\RR$. 
\item The {\em stoichiometric subspace} of a network is the vector space over $\R$ spanned by the reaction vectors of the network, $\S_\R(\RR):=\text{span} (V(\RR))$. The {\em stoichiometric module} of a network is the $\Z$-module generated by the reaction vectors of the network, denoted by $\S_\Z(\RR)$.
\item Let $\RR({\vu}) := \{\vy \to \vy' | \vy'-\vy = \vu \}$ represent the set of reactions whose reaction vector is ${\vu}$, so that $\cup_{\vu \in V(\RR)} \RR({\vu}) = \RR$. Let $r({\vect u}) : = \abs{\RR({\vect u})}$ denote the cardinality of the set $\RR({\vect u})$, so that $\sum_{\vu \in V(\RR)} r({\vect u}) = r$. Let $\CC(\vu)$ be the set of reactant complexes for the reactions in $\RR(\vu)$ and let $\CC'(\vu)$ be the set of product complexes for the reactions in $\RR(\vu)$. 
\enen
\end{definition}
\begin{definition}
We say that the reaction $(\vy \ra \vy') \in \RR$ is {\em reversible} if $(\vy' \ra \vy) \in \RR$. We say that a chemical reaction network is reversible or that $\RR$ is reversible if all reactions in $\RR$ are reversible.
\end{definition}
\begin{remark} 
We consider only reversible reaction networks in this article, since detailed balance is only defined for reversible networks. 
\end{remark}
\begin{definition}
\begin{enumerate}
\item
For $x \in \Z_{\ge 0}$ and $y \in \Z$, we define the {\em falling factorial} as follows:
\ba
(x)_y := \begin{cases} x(x-1)\cdots (x-y+1) \quad \text{ for } \quad y \ge 1 \\ 
1 \quad \text{ for } \quad y \le 0
 \end{cases}
\ea
\item We say that $\va \ge \vy$ if $\va - \vy \in \Z_{\ge 0}^s$ and $\va > \vy$ if $\va - \vy \in \Z_{> 0}^s$. In particular, $\va > \v0$ means that $\va \in \Z_{> 0}^s$ and $\va \ge \v0$ means that $\va  \in \Z_{ \ge 0}^s$. 
\item For two vectors of the same dimension, $\va = (a_1, a_2, \ldots, a_s) \in \Z_{\ge 0}^s$ and $\vy = (y_1, y_2, \ldots, y_s) \in \Z^s$, we define the {\em power, falling factorial and factorial} as follows:
\been
\item $\ds
(\va)^{\vy} := \prod_{i=1}^s (a_i)^{y_i}  \in \Z_{\ge 0}
$
\item 
$\ds
(\va)_{\vy} := \prod_{i=1}^s (a_i)_{y_i}  \in \Z_{\ge 0}
$
\item $\ds
\va! := \prod_{i=1}^s a_i!  \in \Z_{\ge 0}
$
\enen
\end{enumerate}
\end{definition}

\subsection{Mass-action kinetics} \label{sec:mak}
For a deterministic chemical reaction network, the state of the system at any time is specified by $\vx = (x_1, x_2, \ldots, x_s) \in \R^s$ where $x_i$ are the species concentrations and for a stochastic chemical reaction network, the state of the system is specified by $\va = (a_1, a_2, \ldots, a_s) \in \R^s$ where $a_i$ are the species numbers. In general, the rate at which a reaction $(\vy \to \vy') \in \RR$ occurs is a function of the reactant complex $\vy$, the product complex $\vy'$ and either the species concentration vector $\vx$ in the deterministic case or the species number vector $\va$ in the stochastic case. Furthermore, this reaction rate function is parametrized by reaction rate constants.

\begin{definition} 
The {\em parametrized reaction rate} is defined as $\Gamma_{d, \vk}: \RR \times \R_{\ge 0}^s \to \R_{\ge 0}$ where $\Gamma_{d, \vk}((\vy \to \vy'), \vx)$ is the deterministic reaction rate or as $\Gamma_{s, \vk}: \RR \times \Z_{\ge 0}^s \to \R_{\ge 0}$ where $\Gamma_{s, \vk}((\vy \to \vy'), \va)$ is the stochastic reaction rate. In each case, $\vk$ represents the set of reaction rate constants which parametrize the reaction rates. 
\end{definition}
The choice of a certain class of functions for reaction rates is referred to as specifying kinetics. In this article, we restrict our attention to the common choice of {\em mass-action kinetics}. For mass-action kinetics, a single reaction rate constant $k_{\vy \to \vy'} > 0$ is associated with each reaction $\vy \to \vy' \in \RR$. For a chemical reaction network $\Net$ with $r = \abs{\RR}$, let the (ordered) set of positive {\em mass-action reaction rate constants} be denoted by $\vk = (k_1, k_2, \dots, k_{r}) \in \Rplus^{r} $. For a reaction vector $\vu$, let $\vk(\vu) = (k_1(\vu), k_2 (\vu), \ldots, k_{r(\vu)}(\vu))$ represent the (ordered) set of rate constants for reactions in the set $\RR(\vu)$, so that $\cup_{\vu \in V(\RR)} \vk(\vu) = \vk$ (up to ordering). For reversible networks, we will always assume {\em consistent ordering} by which we mean that $k_j(-\vu)$ is the rate constant of the backward reaction whose forward reaction has the rate constant $k_j(\vu)$. 

When the chemical reaction network is endowed with stochastic mass-action kinetics, the result is a continuous-time Markov chain model. The alternative is to specify deterministic mass-action kinetics which results in a system of ordinary differential equations. Definition \ref{def:mass-action} defines the two types of mass-action kinetics.

\begin{definition} \label{def:mass-action} 
Let $\vu$ be a reaction vector and let $\RR(\vu) = \{\vy_i(\vu) \stackrel[]{k_i(\vu)}{\longrightarrow} \vy_i(\vu) + \vu | 1 \le i \le r(\vu)\}$.  Here $\CC(\vu) = \{{\vy}_1(\vu), {\vy}_2(\vu), \ldots {\vy}_{r(\vu)}(\vu)\}$ are the reactant complexes,  $\CC'(\vu) = \{{\vect y}_1(\vu) + \vu, {\vect y}_2(\vu) + \vu, \ldots {\vect y}_{r(\vu)}(\vu) + \vu \}$ are the product complexes, and $k_i(\vu) := k_{\vy_i(\vu) \to \vy_i(\vu) + \vu}$ is the reaction rate constant for the reaction $(\vy_i(\vu) \to \vy_i(\vu) + \vu) \in \RR(\vu)$. 

\been
\item 
Let $a_i \in \Z_{\ge 0}$ represent the population of the chemical species $X_i$ and let ${\vect a} = (a_1, \ldots, a_s) \in \Z_{\ge 0}^s$ represent the state of the system. A chemical reaction network is said to be endowed with {\em stochastic mass-action kinetics} if for a state ${\va}$ and for all reaction vectors $\vu$ the transition $\va \ra \va + \vu$ happens at rate:
\ba
\rho (\va, \va+\vu) := \sum_{i=1}^{r(\vu)} k_i(\vu) (\va)_{\vy_i(\vu)} \label{eq:stotrans}
\ea
If the system is in state $\va$, the rate at which the populations of the species change is given by
\ba
\sum_{\vu \in V(\RR)} \vu \sum_{i=1}^{r(\vu)} k_i(\vu) (\va)_{\vy_i(\vu)} =: sto(\va) \label{eq:stochrate}
\ea

\item Let $x_i$ represent the concentration of the chemical species $X_i$ and let $\vx = (x_1,\ldots,x_{s}) \in \R_{\ge 0}^s$ represent the state of the system. A chemical reaction network is said to be endowed with {\em deterministic mass-action kinetics} if the time-evolution of $\vx(t)$ is governed by the following system of ordinary differential equations:
\begin{equation}
\frac{d \vx}{dt} = \sum_{\vu \in V(\RR)} \vu \sum_{i=1}^{r(\vu)} k_i(\vu) \vx^{\vy_i(\vu)}  =:  det( \vx)
  \label{eq:detrate}
\end{equation}
\enen
\end{definition}
The striking resemblance between $sto(\va)$ in \eqref{eq:stochrate} and $det(\vx)$ in \eqref{eq:detrate} is not a random coincidence. In fact, the time evolution of \eqref{eq:detrate} closely approximates the dynamical evolution of \eqref{eq:stochrate} for the special case of large population numbers.  

If the initial state for the stochastic mass-action system is $\va_0 \in \Z_{\ge 0}^s$, then for all future times the trajectory is restricted to $\invtPoly_{\va_0} := (\va_0 + \S_\Z) \cap \Z_{\ge 0}^s$. Thus $\invtPoly_{\va_0}$ is the set of states that are accessible from $\va_0$. Similarly in the case of the deterministic system, if the initial position is $\vx_0 \in \R_{\ge 0}^s$, then the trajectory is confined to $\invtPoly_{\vx_0} := (\vx_0+\S_\R) \cap \mathbb{R}^{s}_{\geq 0}$ for all positive time. In the chemical reaction network theory literature, $\invtPoly_{\vx_0}$ is referred to as the stoichiometric compatibility class containing $\vx_0$. 
\begin{definition} \label{def:stoichcompclass}
\begin{enumerate}
\item
For deterministic mass-action kinetics, we refer to $\invtPoly_{\vx_0} := (\vx_0+\S_\R) \cap \mathbb{R}^{s}_{\geq 0}$ as {\em the stoichiometric compatibility class containing $\vx_0$}. 
\item 
For stochastic mass-action kinetics, we refer to $\invtPoly_{\va_0} := (\va_0 + \S_\Z) \cap \Z_{\ge 0}^s$ as {\em the stoichiometric compatibility class containing $\va_0$}. 
\end{enumerate}
\end{definition}
 
\begin{remark} 
In the stochastic setting, the stoichiometric compatibility class containing $\va_0$ is the set of states that are accessible from $\va_0$. In a reversible chemical reaction network, for all initial states $\va_0 \in \Z^s$ the stochastic stoichiometric compatibility class $\invtPoly_{\va_0}$ is irreducible. The reason for this is that if $\va \in \invtPoly_{\va_0}$ then $\va$ is accessible from $\va_0$, on the other hand reversibility of the underlying chemical reaction network implies that $\va_0$ is accessible from $\va$. For a general CRN (not necessarily reversible), $\invtPoly_{\va_0}$ is a subset of the basin of attraction of some irreducible subset of the Markov chain. 
\end{remark}

\subsection{Topological structure of Markov chain arising from CRN}
\begin{figure}[h!]
\centering
\begin{tikzpicture}[scale=2]
\draw[help lines, dashed, line width=1] (0,0) grid (3,3);
\draw [<->, line width=1.5, black] (0,3.5) -- (0,0) -- (3.5,0);
\draw [->, line width=2, red] (1,1) -- (0,2);
\draw [->, line width=2, green] (0,0) -- (1,0);
\draw [->, line width=2, blue] (0,1) -- (0,0);
\node [below, green] at (0.5,0) {$0 \to A$};
\node [left, blue] at (0,0.5) {$B \to 0$};
\node [above right, red] at (0.5,1.5) {$A+B \to 2B$};
\node [below left] at (0,0) {$(0,0)$};
\node [below] at (1,0) {$(1,0)$};
\node [left] at (0,1) {$(0,1)$};
\node [left] at (0,2) {$(0,2)$};
\node [below right] at (1,1) {$(1,1)$};
\end{tikzpicture}
\caption{The minimal transitions in the reaction network $\{0 \lra A, 0 \lra B, A+B \lra 2B\}$. All possible transitions in the graph of the Markov chain are obtained by translating the minimal transitions to the right and above. In other words, if $\{(a_1, \ldots, a_n) \to (b_1, \ldots, b_n)\}$ is the set of minimal transitions then the set of all possible transitions is $\{(a_1, \ldots, a_n) + \Z_{\ge 0}^n \to (b_1, \ldots, b_n) + \Z_{\ge 0}^n\} $.} 
\end{figure}
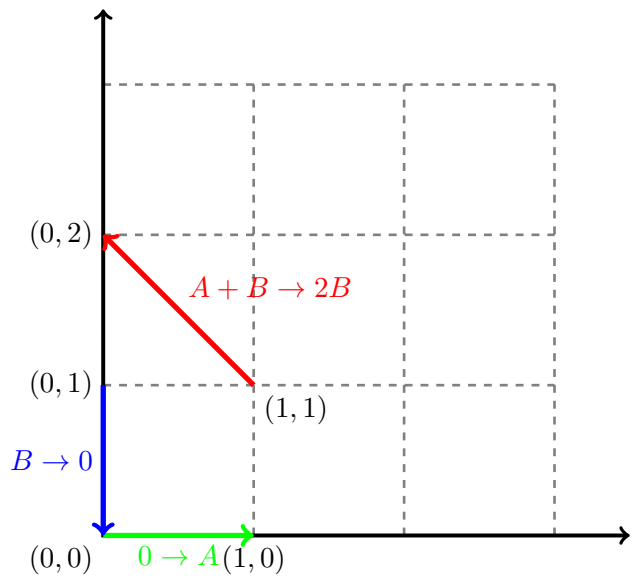

The graph corresponding to the transition (or adjacency) matrix of the Markov chain induced by a chemical reaction network has a special ``lattice structure" which we now describe. 
\begin{definition} {\em $\vu$ is said to be a reaction vector based at $\va$} if there is a $(\vy \ra \vy') \in \RR$ such that $\vy'-\vy = \vu$ and $\va \ge \vy$. 
\end{definition}
\begin{proposition} Suppose that $\vu$ is a reaction vector based at $\va$. Then for ${\vect b} \in \Z_{\ge 0}^s$, $\vu$ is a reaction vector based at $\va + {\vect b}$. 
\end{proposition}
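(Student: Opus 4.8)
The plan is to unwind the definition of ``reaction vector based at a state'' and then reuse, verbatim, the very reaction that witnesses the hypothesis. By assumption there is some $(\vy \ra \vy') \in \RR$ with $\vy' - \vy = \vu$ and $\va \ge \vy$. Since the conclusion asks only that $\vu$ be based at $\va + \vb$, I would keep this same reaction fixed and observe that the first clause of the definition (namely $\vy' - \vy = \vu$) is already satisfied and does not involve the base point at all. Thus the only thing left to verify is the inequality $\va + \vb \ge \vy$.

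To check that inequality I would recall that $\va \ge \vy$ means precisely $\va - \vy \in \Z_{\ge 0}^s$. Writing $(\va + \vb) - \vy = (\va - \vy) + \vb$, both summands lie in $\Z_{\ge 0}^s$: the first by hypothesis and the second because $\vb \in \Z_{\ge 0}^s$ by assumption. As $\Z_{\ge 0}^s$ is closed under addition, the sum lies in $\Z_{\ge 0}^s$, which is exactly the statement $\va + \vb \ge \vy$. The same reaction $(\vy \ra \vy')$ then satisfies both clauses of the definition with base point $\va + \vb$, so $\vu$ is a reaction vector based at $\va + \vb$.

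As for the main obstacle, there is none of real substance: the proposition is simply a monotonicity property of the componentwise partial order $\ge$, and its entire content is the closure of the nonnegative orthant under addition. No case analysis is required, and no appeal to reversibility, mass-action kinetics, or the lattice structure of the Markov chain is needed -- the reaction certifying the hypothesis is carried over unchanged, and only the trivial inequality must be propagated.
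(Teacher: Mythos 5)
Your proof is correct and follows exactly the route of the paper's own proof: fix the witnessing reaction $(\vy \ra \vy') \in \RR$ with $\vy' - \vy = \vu$ and $\va \ge \vy$, and observe that $\va + \vb \ge \vy$, so the same reaction witnesses that $\vu$ is based at $\va + \vb$. The only difference is that you spell out the step the paper labels ``clearly'' (closure of $\Z_{\ge 0}^s$ under addition), which is a fine amount of extra detail.
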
 
\begin{proof} Since $\vu$ is a reaction vector based at $\va$, there is at least one reaction $(\vy \ra \vy') \in \RR$ such that $\vy'-\vy = \vu$ and $\va \ge \vy$. Clearly, $\va + {\vect b} \ge \vy$ and so $\vu$ is a reaction vector based at $\va + {\vect b}$. 
\end{proof}
\begin{proposition} If at least one reaction in $\RR(\vu)$ is reversible, and $\vu$ is a reaction vector based at $\va$, then $-\vu$ is a reaction vector based at $\va + \vu$. 
\end{proposition}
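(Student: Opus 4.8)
The plan is to extract an explicit witnessing reaction from the hypothesis that $\vu$ is based at $\va$, and then to reverse it. First I would unwind the definition of ``based at'': since $\vu$ is a reaction vector based at $\va$, there is a reaction $(\vy \ra \vy') \in \RR$ with $\vy' - \vy = \vu$ and $\va \ge \vy$. The conclusion asks for a reaction whose reaction vector is $-\vu$ and whose reactant complex is dominated by $\va + \vu$, so the natural candidate is the reverse of this very reaction.

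Next I would produce that reverse reaction and verify the two requirements. Invoking reversibility of $(\vy \ra \vy')$ yields $(\vy' \ra \vy) \in \RR$, whose reaction vector is $\vy - \vy' = -\vu$, so $(\vy' \ra \vy) \in \RR(-\vu)$, and whose reactant complex is $\vy'$. It then remains to check the domination condition $\va + \vu \ge \vy'$. Since $\vy' = \vy + \vu$, and $\ge$ is invariant under translation, adding $\vu$ to both sides of $\va \ge \vy$ gives $\va + \vu \ge \vy + \vu = \vy'$. Hence $-\vu$ is a reaction vector based at $\va + \vu$, witnessed by $(\vy' \ra \vy)$.

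The one point that needs care — and which I expect to be the main obstacle — is ensuring that the reversible reaction is \emph{precisely} the reaction that witnesses $\vu$ being based at $\va$. The hypothesis only supplies \emph{some} reversible reaction in $\RR(\vu)$, whereas the definition of ``based at'' may be witnessed by a \emph{different} reaction $(\vy \ra \vy')$ whose reactant $\vy$ is dominated by $\va$ even though $\va$ fails to dominate the reactant of the reversible one; in that situation the reverse reaction above need not exist and the domination step can break. This gap is closed by the standing assumption that every network in this article is reversible (see the Remark following the definition of reversibility): then every reaction in $\RR(\vu)$ is reversible, in particular the witnessing reaction $(\vy \ra \vy')$, so the reverse $(\vy' \ra \vy)$ used above is guaranteed to exist. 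I would therefore phrase the argument directly in terms of the witnessing reaction and simply record that its reversibility is automatic in the reversible setting under consideration.
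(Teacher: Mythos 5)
Your proof is correct and follows essentially the same route as the paper's one-line argument: take the reaction $\vy \ra \vy + \vu$ witnessing that $\vu$ is based at $\va$, reverse it, and observe that $\va \ge \vy$ translates to $\va + \vu \ge \vy + \vu$. Your closing caveat is well taken and is in fact glossed over by the paper: as stated, the hypothesis only guarantees that \emph{some} reaction in $\RR(\vu)$ is reversible, which need not be the witnessing one (and one can build non-reversible counterexamples where the conclusion then fails), so the argument genuinely relies on the article's standing assumption that the whole network is reversible --- exactly the fix you supply.
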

\begin{proof}
Let $\vy \ra \vy + \vu$ be reversible, so that $( \vy + \vu \ra (\vy + \vu) - \vu ) \in \RR$, from which the result follows. 
\end{proof}
These easy propositions indicate that the topological structure of the Markov chain that arises from chemical reaction networks is highly constrained, and in particular possesses the highly symmetrical and repeating structure of a lattice.   Many properties of the graph of a Markov chain arising from a CRN are studied in \cite{pauleve2014dynamical}. 
\begin{definition} \been
\item If $\vu$ is a reaction vector based at $\va$ and $-\vu$ is a reaction vector based at $\va + \vu$, then we say that $\vu$ is a {\em reversible reaction vector based at $\va$}. 
\item A cyclically ordered nonempty set of reaction vectors $(\vu_1, \ldots, \vu_c)$  is called a {\em reaction cycle} if $\sum_{i=1}^c \vu_i = {\vect 0}$. By cyclic ordering, we mean that the reaction cycles $(\vu_1, \vu_2, \ldots, \vu_c)$ and $(\vu_2, \vu_3, \ldots, \vu_c, \vu_1)$ are considered identical. 
\beit
\item 
A reaction cycle $U$ is {\em trivial} if $U = (\vu, -\vu)$, otherwise it is {\em nontrivial}. 
\item A reaction cycle $(\vu_1, \ldots, \vu_c)$ is {\em reducible} if there  is a proper consecutive cyclically ordered subsequence of $(\vu_1, \ldots, \vu_c)$ which is a reaction cycle. Otherwise $(\vu_1, \ldots, \vu_c)$ is irreducible. 
\enit
\item A reaction cycle $(\vu_1, \ldots, \vu_c)$ is a {\em reversible reaction cycle} if each $\vu_i$ is a reversible reaction vector. 
\item A {\em reversible reaction cycle $(\vu_1, \ldots, \vu_c; \va)$ based at $\va$} is a reversible reaction cycle $(\vu_1, \ldots, \vu_c)$ along with a state $\va$ such that $\vu_i$ is a reaction vector based at $\va + \sum_{j=1}^{i-1} \vu_j$ and $-\vu_i$ is a reaction vector based at $\va + \sum_{j=1}^{i} \vu_j$ for all $i \in \{1, \ldots, c \}$. 
\enen
\end{definition}
Without explicitly mentioning it, we will always assume modular arithmetic (mod $c$) for the indices in a reaction cycle of length $c$. For instance, for the reaction cycle $(\vu_1, \ldots, \vu_c)$, the indices satisfy $c+1=1$, and the vectors $\vu_c$ and $\vu_1$ are considering adjacent. 

\section{What is detailed balance?} \label{sec:detbal}

\subsection{Detailed balance as a collection of related ideas}

Detailed balance arises in deterministic chemical reaction network theory \cite{ feinberg1989necessary,lewis1925new,  onsager1931reciprocal, wigner1954derivations}, theory of stochastic CRNs \cite{whittle1986systems}, theory of discrete-time and continuous-time Markov chains (see \cite{kelly1979reversibility} for numerous examples), and electrical networks \cite{casimir1949some}. When the context is not provided, a detailed balanced network or a detailed balanced equilibrium should be thought of as a meta-concept or as a collection of related ideas each of which has the following properties: 
\been
\item there is either an explicit or implicit underlying  network structure which is fixed in time, and has the property that for every pair of nodes $\{a,b\}$, if $(a,b)$ is an edge then $(b,a)$ is an edge,
\item there is a positive real-valued function defined on the edge set which is fixed in time, called the edge weight, 
\item there is a dynamically evolving (either in discrete or continuous-time) function, whose evolution is at least partially dictated by the edge weights, called the state variable,
\item a dynamic equilibrium can be defined and exists for the system, 
\item each directed edge has some notion of ``flow" associated with it (which is in general, a function of the edge weight, the node where the edge originates, and the state variable)
\item a detailed balanced equilibrium exists if, at this equilibrium, the flow from node $a$ to node $b$ is equal to the flow from node $b$ to node $a$ for every pair of nodes $(a,b)$ in the network. 
\item detailed balance is a property of the network taken with the edge weights rather than of any particular equilibrium of the dynamics on the network. This is because the conditions that are required to show the detailed balance property of an equilibrium are shown to be equivalent to conditions that can be written as constraints on the fixed edge weights. Such conditions include circuit conditions in the case of deterministic mass-action CRNs, and Kolmogorov cycle conditions in the theory of Markov chains.
\enen

In table \ref{tab:detbal}, we provide the specific instantiations of the above-listed properties in the contexts of (1) chemical reaction network, and (2) graph of Markov chain of the CRN. 

\begin{table}[h!]
\centering
\begin{tabular}{|c || c |c | }
\hline
Network & Chemical reaction network & Graph of MC of CRN \\
\hline
\hline
Vertex set & Chemical complexes & Species population vector \\
\hline
Edge set & Chemical reactions & Positive probability transitions \\
\hline
Size & Finite & Infinite \\
\hline
Edge weight & Reaction rate constant & Transition probability \\
\hline
State variable & Species concentration vector & Probability of state \\
\hline
Equilibrium & Steady state of ODE system & Stationary measure \\
\hline
Flow & Reaction rate & Probability flow \\
\hline
\end{tabular}
\caption{Instances of the ideas related to detailed balance in two specific contexts: (1) chemical reaction network, and (2) graph of Markov chain of the CRN.}
\label{tab:detbal}
\end{table}
We now proceed to define the two types of detailed balance that are relevant for this article. 

\subsection{Detailed balance in deterministic models of CRNs} \label{sec:RNDB}

\begin{definition}[Reaction network detailed balance] \been
\item By {\em steady state of a chemical reaction network}, we mean a steady state of the system of differential equations resulting from applying the kinetic scheme to the network. Thus for deterministic mass-action kinetics, $\vx^*$ is a steady state if $det(\vx^*) = {\vect 0}$ in equation \eqref{eq:detrate}. If $\vx^* \in \R_{>0}^s (\in \R_{\ge 0}^s) $, we say that $\vx^*$ is a {\em positive (non-negative) steady state}. 
\item A steady state of a reversible chemical reaction network is said to be {\em detailed balanced} if at the steady state, the rate of each forward reaction $(\vy \ra \vy') \in \RR$ is equal to the rate of the backward reaction $\vy' \ra \vy$. 
\item A reversible chemical reaction network is said to satisfy {\em reaction network detailed balance} if each positive steady state of the network is detailed balanced. 
\enen
\end{definition}
\begin{remark}
For mass-action kinetics, if one positive steady state of a network is detailed balanced then every positive steady state is detailed balanced. Thus, detailed balance is a property of the network itself rather than any particular steady state of the network, for details see for instance \cite{feinberg1989necessary}. 
\end{remark}

For mass-action kinetics, reaction network detailed balance can be guaranteed by specifying certain relations between the rate constants. Let $\vu$ be a reaction vector. Let \[ 
\left(\vy_i(\vu) \stackrel[]{k_i(\vu)}{\longrightarrow} \vy_i(\vu) + \vu\right) \in \RR(\vu)
\] For a reaction network detailed balanced network, if $\vx^*$ is a positive steady state, then we must have that $k_i(\vu) (\vx^*)^{\vy_i(\vu)} = k_i(- \vu) (\vx^*)^{\vy_i(\vu) + \vu}$ which simplifies to 
\ba
(\vx^*)^{\vu} = \frac{k_i(\vu)}{k_i(- \vu)} \label{eq:RNDBeq}
\ea
This condition which relates the detailed balanced steady state coordinate values to the reaction rate constants can be written simply as a relation between rate constants without invoking the steady states. We state the result in Theorem \ref{thm:RNDBrateconds}. The content of Theorem \ref{thm:RNDBrateconds} can be found in \cite{feinberg1989necessary}. However, we provide an alternate formulation that is suited to our purposes and does not involve the more technical graph-theoretic jargon of \cite{feinberg1989necessary}. We also provide a simple proof for the forward direction. 
\begin{theorem} \label{thm:RNDBrateconds} A reversible chemical reaction network satisfies reaction network detailed balance if and only if the following relations hold between the rate constants: 
\been
\item For a reaction vector $\vu$, 
\ba \label{eq:ratecond1}
\frac{k_1(\vu)}{k_1(-\vu)} = \frac{k_2(\vu)}{k_2(-\vu)} = \ldots = \frac{k_{r(\vu)}(\vu)}{k_{r(\vu)}(-\vu)}
\ea
\item For a reaction cycle $(\vu_1, \vu_2, \ldots, \vu_c)$ and for all $q(\vu_i) \in \{1, 2, \ldots, r(\vu_i)\}$
\ba \label{eq:ratecond2}
\prod_{i=1}^c \frac{k_{q(\vu_i)}(\vu_i)}{k_{q(\vu_i)}(-\vu_i)} = 1
\ea
\enen
\end{theorem}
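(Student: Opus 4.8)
The plan is to prove the two directions separately: the ``only if'' direction by a short direct computation at a detailed balanced equilibrium, and the ``if'' direction by reconstructing a positive detailed balanced steady state from the rate constants and then invoking the classical fact (recorded in the Remark following the definition) that a single detailed balanced positive steady state forces all positive steady states to be detailed balanced.

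\textbf{Forward direction.} Suppose the network satisfies reaction network detailed balance and let $\vx^*$ be a positive steady state, which is then detailed balanced by hypothesis. Fix a reaction vector $\vu$. For each $i \in \{1,\dots,r(\vu)\}$ the reaction $\vy_i(\vu) \to \vy_i(\vu) + \vu$ balances its reverse at $\vx^*$, so as in \eqref{eq:RNDBeq} we obtain $(\vx^*)^{\vu} = k_i(\vu)/k_i(-\vu)$. Since the left-hand side does not depend on $i$, all the ratios $k_i(\vu)/k_i(-\vu)$ coincide, which is exactly \eqref{eq:ratecond1}. Writing $K(\vu) := (\vx^*)^{\vu}$ for this common value, take any reaction cycle $(\vu_1,\dots,\vu_c)$, so that $\sum_{i=1}^c \vu_i = \v0$. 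Then $\prod_{i=1}^c K(\vu_i) = (\vx^*)^{\sum_i \vu_i} = (\vx^*)^{\v0} = 1$, and substituting $K(\vu_i) = k_{q(\vu_i)}(\vu_i)/k_{q(\vu_i)}(-\vu_i)$ (valid for any choice of $q(\vu_i)$ by \eqref{eq:ratecond1}) gives \eqref{eq:ratecond2}.

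\textbf{Backward direction.} Assume \eqref{eq:ratecond1} and \eqref{eq:ratecond2}. By \eqref{eq:ratecond1} the quantity $K(\vu) := k_i(\vu)/k_i(-\vu)$ is well defined independently of $i$, with $K(-\vu) = 1/K(\vu)$. Define $\ell(\vu) := \log K(\vu)$, so $\ell(-\vu) = -\ell(\vu)$ and, by \eqref{eq:ratecond2}, $\sum_{i=1}^c \ell(\vu_i) = 0$ over every reaction cycle. The crux is to promote $\ell$ to an $\R$-linear functional on $\S_\R(\RR)$: given any integer relation $\sum_j n_j \vu_j = \v0$ among reaction vectors, replace each $\vu_j$ with negative coefficient by $-\vu_j$ (a reaction vector by reversibility) using $\ell(-\vu_j) = -\ell(\vu_j)$, thereby producing a reaction cycle in which each vector is repeated with nonnegative multiplicity; \eqref{eq:ratecond2} then forces $\sum_j n_j \ell(\vu_j) = 0$. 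Hence $\ell$ respects all $\Z$-linear dependencies, so it defines a homomorphism $\S_\Z(\RR) \to (\R,+)$ and extends $\R$-linearly to $\S_\R(\RR)$. Representing this functional as $\vu \mapsto \inprod{\vu, \vy^*}$ for some $\vy^* \in \R^s$ and setting $\vx^* := \exp(\vy^*)$ (componentwise, hence positive), we get $(\vx^*)^{\vu} = \exp(\inprod{\vu,\vy^*}) = K(\vu)$ for every reaction vector. For any reaction $\vy_i(\vu) \to \vy_i(\vu) + \vu$ the backward rate at $\vx^*$ is then $k_i(-\vu)(\vx^*)^{\vy_i(\vu)+\vu} = k_i(-\vu)(\vx^*)^{\vy_i(\vu)}K(\vu) = k_i(\vu)(\vx^*)^{\vy_i(\vu)}$, the forward rate, so $\vx^*$ is detailed balanced; the $\pm\vu$ contributions to $det(\vx^*)$ cancel pairwise, so $\vx^*$ is also a steady state. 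Finally, by the classical mass-action result cited in the Remark (see \cite{feinberg1989necessary}), existence of one detailed balanced positive steady state implies every positive steady state is detailed balanced, i.e.\ reaction network detailed balance holds.

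\textbf{Main obstacle.} The forward direction is essentially a one-line computation once a positive steady state is available. The substance lies in the backward direction, and precisely in the exactness argument: condition \eqref{eq:ratecond2} says exactly that the log-ratio assignment $\vu \mapsto \log K(\vu)$ is closed on the lattice of reaction vectors, which is what permits integrating it to a potential $\vy^*$ and realizing it as $(\vx^*)^{\vu}$. The delicate point is that cycle consistency a priori only constrains sums with nonnegative multiplicities, so one must argue — via $\ell(-\vu) = -\ell(\vu)$ together with \eqref{eq:ratecond1} — that it in fact constrains all integer dependencies among reaction vectors. The concluding ``one equilibrium implies all positive equilibria are detailed balanced'' step is genuinely deeper, and is where I would defer to the classical detailed balance theory rather than reprove it.
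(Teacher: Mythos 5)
Your proposal is correct, and it actually does more than the paper's own proof. On the forward direction you and the paper run essentially the same computation: evaluate $(\vx^*)^{\vu}$ at a detailed balanced positive steady state via \eqref{eq:RNDBeq} and use $\sum_{i=1}^c \vu_i = \v0$ to obtain \eqref{eq:ratecond2}; the only cosmetic difference is that the paper deduces \eqref{eq:ratecond1} afterwards, by applying \eqref{eq:ratecond2} to the trivial cycle $(\vu,-\vu)$ with two different index choices $q(\vu_1)\neq q(\vu_2)$, whereas you read \eqref{eq:ratecond1} off directly from the $i$-independence of $(\vx^*)^{\vu}$. The genuine divergence is the converse: the paper omits it entirely and defers to \cite{feinberg1989necessary}, while you give a Wegscheider-style integration argument --- \eqref{eq:ratecond1} makes $K(\vu)=k_i(\vu)/k_i(-\vu)$ well defined, \eqref{eq:ratecond2} makes $\ell=\log K$ vanish on all integer dependencies among reaction vectors (the sign-flipping step, using reversibility and $\ell(-\vu)=-\ell(\vu)$, is exactly the right move), so $\ell$ extends to a linear functional $\inprod{\,\cdot\,,\vy^*}$ on $\S_\R(\RR)$ and $\vx^*=\exp(\vy^*)$ is a positive detailed balanced steady state. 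Your one imported fact --- that a single detailed balanced positive steady state forces all of them to be detailed balanced --- is precisely what the paper records in the Remark following the definition, with the same citation, so your reliance on it is no heavier than the paper's; net, your proof is strictly more self-contained. Two points are worth making explicit. (i) Your reduction of a dependency $\sum_j n_j\vu_j=\v0$ to a reaction cycle lists $\mathrm{sign}(n_j)\vu_j$ with multiplicity $\abs{n_j}$, so it requires the paper's cycles to admit repeated vectors; the tuple notation supports this reading, and your argument in effect shows the theorem would be false under a no-repetition reading (for the one-species network $0 \rlas 2A$, $A \rlas 0$ with reaction vectors $u=2$, $v=-1$, the necessary relation $K(u)K(v)^2=1$ arises only from the cycle $(u,v,v)$, which repeats $v$), so spelling this out strengthens the proof. (ii) Like the paper's forward argument, yours tacitly assumes a positive steady state exists (otherwise RNDB holds vacuously); this shared assumption is not a gap relative to the paper.
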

\begin{proof} Suppose first that a chemical reaction network satisfies reaction network detailed balance. For a reaction cycle $(\vu_1, \vu_2, \ldots, \vu_c)$, $\sum_{i=1}^c \vu_i = \v0$, and so 
\[
1 = (\vx^*)^\v0 = (\vx^*)^{\sum_{i=1}^c  \vu_i} = \prod_{i=1}^c  (\vx^*)^{\vu_i} = \prod_{i=1}^c \frac{k_{q(\vu_i)}(\vu_i)}{k_{q(\vu_i)}(-\vu_i)}
\]
where $1 \le q(\vu_i) \le r(\vu_i)$. We used \eqref{eq:RNDBeq} in the last step thus proving the second identity \eqref{eq:ratecond2}. The first identity \eqref{eq:ratecond1} then follows by considering the reaction cycle $(\vu_1,\vu_2) = (\vu, -\vu)$ and considering $q(\vu_1) \ne q(\vu_2)$. We will omit proof of the converse and refer the interested reader to \cite{feinberg1989necessary}. 
\end{proof}
\begin{remark}
As the proof makes it clear, the second condition for RNDB, \eqref{eq:ratecond2} subsumes the first condition \eqref{eq:ratecond1}. Thus to show RNDB, it is sufficient to show that \eqref{eq:ratecond2} holds for all reversible reaction cycles. Nevertheless, it is convenient to think of \eqref{eq:ratecond1} as a condition on each reaction vector, and \eqref{eq:ratecond2} as a condition on {\em nontrivial reaction cycles}. 
\end{remark}

We now define the notion of complex balancing in chemical reaction networks, first introduced by Horn and Jackson in \cite{horn1972general}. This idea will be useful to us when determining the stationary distribution of certain Markov chains arising from chemical reaction networks. More details on complex balancing can be found in \cite{feinberg1972complex,horn1972necessary}, also see \cite{anderson2010product, dickenstein2011far} for more recent work. Dickenstein and Mill\'{a}n explore the relation between detailed balance and complex balance in reversible networks \cite{dickenstein2011far}. 

\begin{definition} \label{def:complex-balance}
A chemical reaction network is said to be {\em complex balanced} if for each complex $\vy \in \CC$, at equilibrium, the sum of reaction rates of reactions with $\vy$ as the reactant complex is equal to the sum of reaction rates of reactions with $\vy$ as the product complex,
\ba
\sum_{\vy':(\vy \to \vy') \in \RR} \Gamma_{d,\vk}((\vy \to \vy'), \vx^*) = \sum_{\vy':(\vy' \to \vy) \in \RR} \Gamma_{d,\vk}((\vy' \to \vy), \vx^*).
\ea
\end{definition}
It is easy to see that detailed balance implies complex balance. In the case of mass-action kinetics, the complex balancing condition can be written as:
\ba
(\vx^*)^\vy \sum_{\vy':(\vy \to \vy') \in \RR} k_{\vy \to \vy'} ~~ = ~~  \sum_{\vy':(\vy' \to \vy) \in \RR}k_{\vy' \to \vy}(\vx^*)^{\vy'}.
\ea

\subsection{Detailed balance in Markov chains} \label{sec:MCDB}

The definition of Markov chain detailed balance comes from the theory of Markov chains, we will state the definition in generality since it does not depend on the particular setting of chemical reaction networks. 
\begin{definition}[Markov chain detailed balance] \label{def:stochdetbal} A continuous-time Markov chain with state space $\Om$ and transition rate matrix $\rho$ is said to satisfy {\em detailed balance} if there exists a measure  (i.e. a non-negative, countably additive function) $\mu$ on the set of states, such that for every pair of states $x, y \in \Om$, the relation $\mu (x)\rho(x,y) = \mu (y)\rho(y,x)$ holds. A reversible chemical reaction network is said to satisfy {\em Markov chain detailed balance (MCDB)} if the continuous-time Markov chain resulting from applying mass-action kinetics satisfies detailed balance. 
\end{definition}
An equivalent condition to detailed balance in a Markov chain is the Kolmogorov cycle condition, see for instance \cite{durrett2010probability,kelly1979reversibility}. We will make use of this Kolmogorov criterion for establishing Markov chain detailed balance of chemical reaction networks. 
\begin{theorem}[Kolmogorov cycle condition] Consider a Markov chain with state space $\Om$ and an irreducible transition matrix $\rho$. The Markov chain satisfies detailed balance if and only if (i) $\rho(x,y) >0$ implies that $\rho(y,x) >0$ for every pair of states $x$ and $y$, and (ii) for every sequence of states $(x_0=x_n, x_1, x_2, \ldots, x_{n-1}) \subset \Om$ with $\rho(x_i, x_{i+1}) >0$ for $0 \le i \le n-1$, the so-called Kolmogorov cycle condition (KCC) holds:
\ba
\prod_{j=1}^{n}\frac{\rho (x_{j-1}, x_j)}{\rho (x_{j}, x_{j-1})} = 1 \label{eq:kcc}
\ea
\end{theorem}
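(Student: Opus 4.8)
The plan is to prove the two implications separately: the forward direction (detailed balance $\Rightarrow$ (i) and (ii)) via a one-line telescoping computation, and the converse ((i) and (ii) $\Rightarrow$ detailed balance) by explicitly constructing the witnessing measure $\mu$ along paths and invoking KCC to guarantee it is well-defined.

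For the forward direction, I would assume a nontrivial detailed-balance measure $\mu$ exists, so that $\mu(x)\rho(x,y) = \mu(y)\rho(y,x)$ for all $x,y$. Irreducibility forces $\mu$ to be strictly positive on $\Om$: any state carrying $\mu = 0$ would, through the balance relations and the fact that every state has a positively-rated neighbor, propagate zero values throughout the communicating class, contradicting nontriviality. Positivity makes (i) immediate, since $\rho(x,y) > 0$ yields $\mu(y)\rho(y,x) = \mu(x)\rho(x,y) > 0$. For (ii), I would rewrite the balance relation on each edge of a cycle $(x_0 = x_n, x_1, \ldots, x_{n-1})$ as $\rho(x_{j-1},x_j)/\rho(x_j,x_{j-1}) = \mu(x_j)/\mu(x_{j-1})$ and take the product over $j = 1, \ldots, n$; the right-hand side telescopes to $\mu(x_n)/\mu(x_0) = 1$ because $x_0 = x_n$.

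For the converse, I would fix a reference state $x_* \in \Om$, set $\mu(x_*) := 1$, and for each $y$ pick a directed path $x_* = z_0, z_1, \ldots, z_m = y$ with all forward rates positive (available by irreducibility, with positive reverse rates guaranteed by (i)), and define
\[
\mu(y) := \prod_{i=1}^m \frac{\rho(z_{i-1},z_i)}{\rho(z_i,z_{i-1})}.
\]
This is a positive function on $\Om$, hence a measure in the sense of Definition \ref{def:stochdetbal}. The hard part — and the only place the hypotheses are really used — is showing that $\mu$ does not depend on the chosen path. Given two paths $P, Q$ from $x_*$ to $y$, I would splice $P$ with the reversal of $Q$ into a closed cycle based at $x_*$ and apply KCC \eqref{eq:kcc}: reversing $Q$ replaces each edge ratio by its reciprocal, so KCC forces the product along $P$ to equal the product along $Q$.

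Once $\mu$ is well-defined, detailed balance follows from a short consistency check. For any edge with $\rho(x,y) > 0$ (so $\rho(y,x) > 0$ by (i)), computing $\mu(y)$ along a path to $x$ extended by the edge $x \to y$ gives $\mu(y) = \mu(x)\,\rho(x,y)/\rho(y,x)$, which rearranges to $\mu(x)\rho(x,y) = \mu(y)\rho(y,x)$; for pairs with $\rho(x,y) = 0$, condition (i) forces $\rho(y,x) = 0$ and both sides vanish. I expect the path-independence of $\mu$ to be the main obstacle, since it is precisely the content that the Kolmogorov cycle condition is designed to supply, whereas the remaining steps are routine telescoping and rearrangement.
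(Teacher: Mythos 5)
Your proof is correct, but note that the paper does not actually prove this theorem: it is stated as a known result and attributed to standard references (Durrett; Kelly), so there is no in-paper argument to compare against. What you have written is essentially the classical proof of Kolmogorov's criterion found in those references: strict positivity of $\mu$ on the irreducible class, a telescoping product for the forward direction, and a path-construction of $\mu$ with KCC supplying path-independence for the converse — with path-independence correctly identified as the crux. Two small points are worth making explicit. First, the paper's Definition \ref{def:stochdetbal} literally admits the zero measure, so your standing assumption that $\mu$ is nontrivial is needed and is the right reading of the definition; your propagation argument (a state with $\mu=0$ forces $\mu=0$ on every state that can reach it, hence everywhere by irreducibility) is the correct way to close that loophole. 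Second, your splicing of $P$ with the reversal of $Q$ may produce a closed walk that repeats states; this is harmless precisely because hypothesis (ii) and equation \eqref{eq:kcc} are stated for arbitrary sequences of states with positive forward rates rather than only for simple cycles, so KCC applies to the spliced walk. With those two observations your argument is complete and fills in exactly what the paper leaves to the cited literature.
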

The transition matrix for a reversible reaction network with the state space restricted to a stoichiometric compatibility class is irreducible. Furthermore, since the reactions are reversible, if $\rho(x,y) >0$ then $\rho(y,x)>0$ for every pair of states $x$ and $y$. Thus, in order to determine conditions for MCDB to hold, we only need to find conditions for KCC to hold. We will now translate KCC to reversible chemical reaction networks with stochastic mass-action kinetics. Consider a reversible reaction cycle $(\vu_1, \vu_2, \ldots, \vu_c)$ based at $\va$. Then $(x_0 = \va, x_1 = \va + \vu_1, x_2 = \va + \vu_1 + \vu_2, \ldots, x_{c-1} = \va + \vu_1 + \vu_2 + \ldots + \vu_{c -1})$ is a cycle of states of length $c$. We replace the transition rates for chemical reaction networks from \eqref{eq:stotrans} into \eqref{eq:kcc} to get the following:
\ba
\prod_{j=1}^{c}\frac{\sum_{i=1}^{r(\vu_j)} k_i (\vu_j) (\va + \vu_1 + \ldots + \vu_{j-1})_{\vy_i(\vu_j)}}{\sum_{i=1}^{r(\vu_j)} k_i (-\vu_j) (\va + \vu_1 + \ldots + \vu_{j})_{\vy_i(-\vu_j)}} = 1 \label{eq:MCDBeq}
\ea
where we used $r(-\vu) = r(\vu)$ which holds for reversible networks. If \eqref{eq:MCDBeq} holds on every cycle $(\vu_1, \vu_2, \ldots, \vu_c)$ in the graph of the transition matrix of the Markov chain, then the network satisfies Markov chain detailed balance. On the face of it, the conditions on the reaction rates required for reaction network detailed balance--\eqref{eq:ratecond1} and \eqref{eq:ratecond2}--look quite dissimilar to the rate condition for Markov chain detailed balance \eqref{eq:MCDBeq}. We will show that the two are closely related, and in fact reaction network detailed balance implies Markov chain detailed balance. Furthermore, in a vast majority of applications the two are equivalent. 

\subsection{Detailed balance in stochastic models of CRNs}
Whittle \cite{whittle1986systems} defined a notion of detailed balance for chemical reaction networks with stochastic mass-action kinetics, which is distinct from the usual definition of detailed balance in Markov chains. In order to avoid confusion, we will refer to Whittle's notion as {\em Whittle stochastic detailed balance (WSDB)}. 
\begin{definition}[Whittle stochastic detailed balance (WSDB)] \label{def:wsdb}
A Markov chain model of a reversible chemical reaction network is said to possess {\em Whittle stochastic detailed balance (WSDB)} if the rate of each forward reaction is equal to the rate of each backward reaction. In other words, for each reaction $\left(\vy_i(\vu) \stackrel[]{k_i(\vu)}{\ra} \vy_i(\vu) + \vu\right) \in \RR(\vu)$, we have that
\ba
k_i(\vu) (\va)_{\vy_i(\vu)} = k_i(-\vu) (\va + \vu)_{\vy_i(\vu) + \vu}
\ea
\end{definition}

\section{Relation between RNDB, WSDB and MCDB} \label{sec:mainthm}

\subsection{Preliminaries}

In this section, we will flesh out the precise relation between RNDB, WSDB and MCDB. First we state a theorem due to Whittle \cite{whittle1986systems}:
\begin{theorem}[Whittle] A reversible chemical reaction network possesses reaction network detailed balanced if and only if it possesses Whittle stochastic detailed balance. 
\end{theorem}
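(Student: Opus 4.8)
The plan is to prove the equivalence by translating both conditions into comparable algebraic statements about the rate constants and a putative steady state, and then showing they coincide reaction-by-reaction. The key observation is that WSDB (Definition \ref{def:wsdb}) is a condition that must hold at \emph{every} accessible state $\va$, whereas RNDB is a condition at a \emph{single} positive steady-state concentration vector $\vx^*$. So the heart of the matter is to connect the state-dependent falling factorials $(\va)_{\vy_i(\vu)}$ appearing in the stochastic rates with the monomials $(\vx^*)^{\vy_i(\vu)}$ appearing in the deterministic detailed balance relation \eqref{eq:RNDBeq}.

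First I would show the direction RNDB $\implies$ WSDB. Assume RNDB holds, so by Theorem \ref{thm:RNDBrateconds} we have the rate-constant relations \eqref{eq:ratecond1} and \eqref{eq:ratecond2}, and there exists a positive steady state $\vx^*$ satisfying $(\vx^*)^{\vu} = k_i(\vu)/k_i(-\vu)$ for each reaction vector $\vu$ and each index $i$. The WSDB identity to be established, $k_i(\vu)(\va)_{\vy_i(\vu)} = k_i(-\vu)(\va+\vu)_{\vy_i(\vu)+\vu}$, can be rearranged into
\begin{align*}
\frac{k_i(\vu)}{k_i(-\vu)} = \frac{(\va+\vu)_{\vy_i(\vu)+\vu}}{(\va)_{\vy_i(\vu)}}.
\end{align*}
The left-hand side equals $(\vx^*)^{\vu}$ by \eqref{eq:RNDBeq}, so the task reduces to a purely combinatorial falling-factorial identity: verifying that the ratio of falling factorials on the right telescopes correctly. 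The reactant complex of the backward reaction is $\vy_i(\vu)+\vu$ evaluated at $\va+\vu$, and a direct component-by-component expansion of the falling factorial $(\va+\vu)_{\vy_i(\vu)+\vu}$ against $(\va)_{\vy_i(\vu)}$ should produce a clean cancellation. The main thing to check is that this ratio is in fact \emph{independent of $\va$} and reduces to the monomial relation, which is precisely the content of the equivalence.

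For the converse, WSDB $\implies$ RNDB, I would argue that if the WSDB identities hold at every state $\va$, then in particular the ratio $k_i(\vu)/k_i(-\vu)$ is forced to equal a fixed quantity determined by the falling-factorial ratio, and one then defines $\vx^*$ by setting its coordinates so that $(\vx^*)^{\vu}=k_i(\vu)/k_i(-\vu)$ for each reaction vector. One must check that this assignment is consistent (i.e. the cycle conditions \eqref{eq:ratecond2} hold so that the $\vx^*$ is well-defined) and that such a $\vx^*$ is genuinely a detailed-balanced positive steady state, giving RNDB. Consistency across reaction vectors sharing a reaction cycle is what reproduces the circuit conditions.

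The main obstacle I anticipate is the falling-factorial bookkeeping: making the cancellation in $(\va+\vu)_{\vy_i(\vu)+\vu}/(\va)_{\vy_i(\vu)}$ fully rigorous, since $\vu$ can have both positive and negative components and the falling factorial is defined piecewise via the convention in the definition of $(x)_y$. The cleanest route is probably to work one species coordinate at a time, reducing to the scalar identity $(a+u)_{y+u} = (a)_y \cdot (a+u)(a+u-1)\cdots$ appropriately, and to observe that the net effect over all coordinates is exactly the product $\prod_j$ of the shift monomials, which is $\va$-independent. Handling the edge cases where a coordinate of $\vy_i(\vu)$ or $\vy_i(\vu)+\vu$ is zero (so the falling factorial is $1$) requires care but is routine once the scalar identity is nailed down. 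Since this equivalence is attributed to Whittle, I would also note that the full argument appears in \cite{whittle1986systems} and restrict the written proof to the conceptual reduction plus the falling-factorial lemma.
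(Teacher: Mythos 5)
First, a point of comparison: the paper itself does not prove this theorem --- it defers entirely to Whittle's book \cite{whittle1986systems} --- so your attempt is measured against the standard argument rather than a proof in the text. Your reduction has a genuine gap at its central step. You claim that the ratio $(\va+\vu)_{\vy_i(\vu)+\vu}/(\va)_{\vy_i(\vu)}$ is independent of $\va$ and ``reduces to the monomial relation.'' It is not, and it cannot be: by Property 2 of Lemma \ref{lem:calcs} this ratio equals $(\va+\vu)!/\va!$, which varies with $\va$ whenever $\vu \neq \v0$ (and $\vu \neq \v0$ always, since $\vy \neq \vy'$ for every reaction). Concretely, for $0 \rlas A$ with $\vu = (1)$ and reactant complex $\vy_1(\vu) = (0)$, the ratio is $a+1$. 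Hence the identity $k_i(\vu)/k_i(-\vu) = (\va+\vu)_{\vy_i(\vu)+\vu}/(\va)_{\vy_i(\vu)}$ for all accessible $\va$ is unsatisfiable for \emph{any} choice of rate constants, and your direction RNDB $\implies$ WSDB would be proving something false.

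The missing idea is the stationary measure. Whittle's notion of stochastic detailed balance (which Definition \ref{def:wsdb} states too tersely, suppressing the measure) is that there exists a measure $\pi$ on states balancing every \emph{individual reaction}, not merely every net transition: $\pi(\va)\,k_i(\vu)(\va)_{\vy_i(\vu)} = \pi(\va+\vu)\,k_i(-\vu)(\va+\vu)_{\vy_i(\vu)+\vu}$ for all $i$, $\vu$, and accessible $\va$. With this reading, the factorials you were fighting cancel exactly: taking $\pi(\va) \propto (\vx^*)^{\va}/\va!$ (the product form of Theorem \ref{thm:anderson}), Property 2 of Lemma \ref{lem:calcs} gives $\pi(\va+\vu)/\pi(\va) = (\vx^*)^{\vu}\,\va!/(\va+\vu)!$, while the forward-to-backward rate ratio is $\bigl(k_i(\vu)/k_i(-\vu)\bigr)\,\va!/(\va+\vu)!$; so reaction-wise balance holds if and only if $(\vx^*)^{\vu} = k_i(\vu)/k_i(-\vu)$ for every $i$, which is precisely \eqref{eq:RNDBeq}, i.e.\ RNDB. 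Conversely, given such a $\pi$, the same computation forces $k_i(\vu)/k_i(-\vu)$ to be independent of $i$ (condition \eqref{eq:ratecond1}), and consistency of $\pi$ around reaction cycles forces the circuit conditions \eqref{eq:ratecond2}, whence Theorem \ref{thm:RNDBrateconds} yields RNDB. Your converse direction gestures at roughly this, but without the measure in the definition the two sides you are equating are of different kinds (a constant versus a state-dependent combinatorial quantity), and the ``purely combinatorial falling-factorial identity'' you hope will close the argument does not exist.
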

See Whittle \cite{whittle1986systems} for a proof. In this section we will show that in general the following holds for reversible chemical reaction networks with mass-action kinetics:
\ba
\boxed{WSDB \iff RNDB \implies MCDB}
\ea

Let ${\mathcal Z}$ be the set of reversible reaction cycles on $\RR$. Let $\UUU: {\mathcal Z} \times \Z^s \to \R$ be defined via 
\ba \label{eq:functional}
\UUU((\vu_1, \ldots, \vu_c;\va)) := \prod_{j=1}^{c}\frac{\sum_{i=1}^{r(\vu_j)} k_i (\vu_j) (\va + \vu_1 + \ldots + \vu_{j-1})_{\vy_i(\vu_j)}}{\sum_{i=1}^{r(\vu_j)} k_i (-\vu_j) (\va + \vu_1 + \ldots + \vu_{j})_{\vy_i(-\vu_j)}}
\ea
where $(\vu_1 ,\ldots, \vu_c;\va)$ is a reversible reaction cycle based at $\va$. If for all $\va \in \Z_{\ge 0}^s$ and for all reaction cycles $U \in {\mathcal Z}$ based at $\va$, we have $\UUU(U; \va)=1$, then the network satisfies Markov chain detailed balance. In order to find conditions on the rate constants of $\RR$ for Markov chain detailed balance, the first order of business is to bring \eqref{eq:functional} into a more workable form. We start with a lemma which facilitates manipulation of the falling factorials. 
\begin{lemma} \label{lem:calcs}
The following identities hold for $\vx\in \Z_{\ge 0}^s$ and $\va, \vb, \vu \in \Z^s$:
\been[Property 1.]
\item For $\vx \ge \va, \vb$, ~
$ \displaystyle
\frac{(\vx)_\va}{(\vx)_\vb} = \frac{(\vx-\vb)_{\va-\vb}}{(\vx-\va)_{\vb-\va}} 
$. \\ In particular if $\vx \ge \va \ge \vb$, then $ \displaystyle
\frac{(\vx)_\va}{(\vx)_\vb} = (\vx-\vb)_{\va-\vb}
$.
\item 
For $\vx \ge \va$ and $\vx + \vu \ge {\vect 0}$, $ \displaystyle
\frac{(\vx)_\va}{(\vx+\vu)_{\va+\vu}} = \frac{(\vx)_{-\vu}}{(\vx+\vu)_{\vu}} =  \frac{\vx!}{(\vx+\vu)!} 
$.
\enen
\end{lemma}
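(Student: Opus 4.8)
The plan is to reduce both identities to their one-coordinate versions and then settle those by elementary factorial bookkeeping. Since $(\va)_\vy = \prod_{i=1}^s (a_i)_{y_i}$ and $\va! = \prod_{i=1}^s a_i!$, and since each vector inequality such as $\vx \ge \va$ holds precisely when it holds in every coordinate, every quantity appearing in Property~1 and Property~2 factors as a product over the $s$ coordinates. Hence it suffices to prove each identity for scalars $x \in \Z_{\ge 0}$ and integers $a,b,u$ under the coordinatewise hypotheses. The two scalar facts I would record first are the factorial representation $(n)_m = n!/(n-m)!$, valid exactly when $0 \le m \le n$, together with the convention $(n)_m = 1$ for $m \le 0$; and the splitting rule $(n)_m = (n)_\ell\,(n-\ell)_{m-\ell}$ for $0 \le \ell \le m \le n$, which is immediate from the representation.

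For Property~1 I would first observe that the asserted identity is invariant under swapping $a \leftrightarrow b$ (both sides simply invert), so I may assume $a \ge b$. Then $b-a \le 0$ forces $(x-a)_{b-a} = 1$, and the claim collapses to $(x)_a/(x)_b = (x-b)_{a-b}$, which is exactly the splitting rule with $\ell = b$; this is legitimate once $0 \le b \le a \le x$. The displayed special case ``$\vx \ge \va \ge \vb$'' is then nothing but this computation read coordinatewise.

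For Property~2 I would prove the two equalities in turn. The rightmost equality $(x)_{-u}/(x+u)_u = x!/(x+u)!$ I would verify by splitting on the sign of $u$: for $u \ge 0$ the numerator is $1$ and $(x+u)_u = (x+u)!/x!$, while for $u<0$, writing $u=-w$, one has $(x)_{-u}=(x)_w = x!/(x-w)!$ and $(x+u)_u = (x-w)_{-w} = 1$; both cases give $x!/(x+u)!$. For the leftmost equality $(x)_a/(x+u)_{a+u} = x!/(x+u)!$ I would use $(x)_a = x!/(x-a)!$ and $(x+u)_{a+u} = (x+u)!/(x-a)!$, so that the common factor $(x-a)!$ cancels; this uses that $a$ and $a+u$ are nonnegative lower indices bounded by their respective bases $x$ and $x+u$.

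The one point that requires care---and the only real obstacle---is that the falling factorial here is truncated to $1$ whenever its upper index is non-positive, so the clean representation $(n)_m = n!/(n-m)!$ is available only inside the window $0 \le m \le n$. The whole argument is therefore bookkeeping to guarantee that every lower index in sight lands in that window. I would make this explicit: using the stated hypotheses ($\vx \ge \va,\vb$ for Property~1; $\vx \ge \va$ and $\vx+\vu \ge \v0$ for Property~2) together with the fact that the relevant lower indices are entries of complexes (hence in $\Z_{\ge 0}^s$) and their admissible shifts $\va+\vu$, I would check coordinatewise that each upper index lies between $0$ and its base before invoking the factorial formula. Once that range check is in place, each identity reduces to a single factorial cancellation.
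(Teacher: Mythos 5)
Your proof is correct and takes essentially the same route as the paper's: reduce to a single coordinate via the product structure of the falling factorial and the factorial, then settle the scalar identities by elementary bookkeeping with a case split on signs. The differences are in execution, and one of them is worth dwelling on. The paper writes each falling factorial as an explicit descending product $(x_i)_{a_i}=x_i(x_i-1)\cdots(x_i-a_i+1)$ and cancels common factors --- a step that silently presumes the lower indices are positive --- whereas you route everything through $(n)_m=n!/(n-m)!$ and state openly that this representation is valid only in the window $0\le m\le n$, importing the needed nonnegativity from the application (the lower indices are stoichiometric coefficients of complexes and their admissible shifts). This is not pedantry, because the lemma as literally stated, with $\va,\vb,\vu$ ranging over all of $\Z^s$, fails outside that window: for Property~1 take $s=1$, $\vx=2$, $\va=1$, $\vb=-1$ (so the hypothesis $\vx\ge\va,\vb$ holds), and the two sides are $(2)_1/(2)_{-1}=2$ and $(3)_2/(1)_{-2}=6$; for Property~2 take $\vx=2$, $\va=0$, $\vu=-1$, and the first member is $(2)_0/(1)_{-1}=1$ while the other two equal $2$. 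So both your argument and the paper's in fact prove the lemma under the additional hypothesis that $\va$, $\vb$ and $\va+\vu$ are componentwise nonnegative --- a hypothesis satisfied in every use the paper makes of the lemma, since there the roles of $\va$, $\vb$, $\va+\vu$ are played by $\vy_i(\vu)$, $\vm(\vu)$ and $\vm(-\vu)=\vm(\vu)+\vu$ --- and yours has the merit of saying so explicitly rather than burying it in the product notation. Two smaller points in your favor: the symmetry observation that swapping $\va\leftrightarrow\vb$ inverts both sides of Property~1 cleanly halves the paper's two-case analysis, and your cancellation of $(\vx-\va)!$ for the first equality of Property~2 is a compact alternative to the paper's term-by-term cancellation.
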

\begin{proof} 
If $\vx = \va$, then $\va \ge \vb$, so that $(\vx - \va)_{\vb-\va} = 1$. Similarly, if $\vx = \vb$ then $(\vx - \vb)_{\va-\vb} = 1$. In particular, this implies that for $\vx \ge \va, \vb$, we have $(\vx - \va)_{\vb-\va} \ge 1$ and $(\vx - \vb)_{\va-\vb} \ge 1$. 
So that for $1 \le i \le s$, 
\begin{align*}
\frac{(x_i)_{a_i}}{(x_i)_{b_i}} &= \frac{(x_i)(x_i-1) \ldots (x_i-a_i+1)}{(x_i)(x_i-1) \ldots (x_i-b_i+1)}  \\
&= \begin{cases} (x_i - b_i)_{a_i - b_i} \quad \mbox{ if } \quad a_i \ge b_i\\
\frac{1}{(x_i - a_i)_{b_i - a_i}} \quad \mbox{ if } \quad  a_i < b_i \end{cases} \\
&= \frac{(x_i - b_i)_{a_i - b_i}}{(x_i - a_i)_{b_i - a_i}}
\end{align*}
Then Property 1 follows by taking a product over $1 \le i \le  s$. 

Since $\vx \ge \va$ and $\vx + \vu \ge {\vect 0}$, $(\vx)_\va > 0$ and $(\vx+\vu)_{\va+\vu} > 0$. For $1 \le i \le s$, 
\begin{align*}
\frac{(x_i)_{a_i}}{(x_i+u_i)_{a_i+u_i}} &= \frac{(x_i)(x_i-1) \ldots (x_i-a_i+1)}{(x_i+u_i)(x_i+u_i-1) \ldots (x_i-a_i+1)} \\
&= \begin{cases} \frac{1}{(x_i+u_i)_{u_i}} \quad \mbox{ if } \quad u_i \ge 0 \\
(x_i)_{-u_i} \quad \mbox{ if } \quad u_i < 0 
\end{cases} \\
&= \frac{(x_i)_{-u_i}}{(x_i+u_i)_{u_i}} \\
&=  \frac{x_i!}{(x_i+u_i)!}
\end{align*}
The last equality can be checked to hold both for $u_i \ge 0$ and $u_i < 0$ when $x_i +u_i >0$. Both equations of Property 2 then follow by taking a product over $1 \le i \le s$. 
\end{proof}
Recall the definition of stochastic mass-action kinetics which gives the rate of transition $\va \ra \va + \vu$ for a reaction vector $\vu$:
\ba
\rho (\va, \va+\vu) = \sum_{i=1}^{r(\vu)} k_i(\vu) (\va)_{\vy_i(\vu)} 
\ea
Further recall that $y_{ij}(\vu)$ is the stoichiometric coefficient of species $j$ in the reactant complex $\vy_i(\vu)$. Let $m_j(\vu) := \min \{ y_{ij} (\vu) | 1 \le i \le r(\vu) \}$ be the smallest stoichiometric coefficient of species $j$ amongst all reactant complexes corresponding to the reaction vector $\vu$. Let $\vm(\vu) : = (m_1(\vu), m_2(\vu), \ldots, m_s(\vu))$. 
\begin{lemma} \label{lem:back} Let $\vu$ be a reversible reaction vector based at $\va$. Then:
\been
\item $\vy_i(-\vu) = \vy_i(\vu) + \vu$. 
\item $\vm(-\vu) = \vm(\vu) + \vu$. 
\enen
\end{lemma}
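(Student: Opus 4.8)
The plan is to prove both identities by directly unwinding the definition of $\RR(\vu)$ together with the consistent-ordering convention adopted for reversible networks, and then to obtain the second identity as a componentwise consequence of the first. This is essentially a bookkeeping lemma, so the work lies in tracking indices carefully rather than in any genuine difficulty.

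For the first identity I would begin from Definition \ref{def:mass-action}, which writes the reactions with reaction vector $\vu$ as $\RR(\vu) = \{\vy_i(\vu) \to \vy_i(\vu) + \vu : 1 \le i \le r(\vu)\}$, so that the reactant complexes are exactly the $\vy_i(\vu)$ and the product complexes are the $\vy_i(\vu) + \vu$. Since $\vu$ is a reversible reaction vector, each reaction $\vy_i(\vu) \to \vy_i(\vu) + \vu$ has a reverse reaction $\vy_i(\vu) + \vu \to \vy_i(\vu)$; this reverse reaction has reaction vector $\vy_i(\vu) - (\vy_i(\vu) + \vu) = -\vu$ and hence lies in $\RR(-\vu)$, with reactant complex $\vy_i(\vu) + \vu$. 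The consistent-ordering convention guarantees that the rate constant $k_i(-\vu)$ is assigned precisely to this reverse reaction, so the $i$-th reactant complex $\vy_i(-\vu)$ of $\RR(-\vu)$ equals $\vy_i(\vu) + \vu$, which is the claim. The only point requiring care is that consistent ordering is what keeps the index $i$ aligned between the forward and backward lists; without it the labeling would merely agree up to a permutation.

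For the second identity I would argue componentwise. Fix a species index $j$. From the first identity I have $y_{ij}(-\vu) = y_{ij}(\vu) + u_j$ for every $i$, where $u_j$ is the $j$-th component of $\vu$ and does not depend on $i$. Using $r(-\vu) = r(\vu)$ for reversible networks, I can then compute $m_j(-\vu) = \min_{1 \le i \le r(-\vu)} y_{ij}(-\vu) = \min_{1 \le i \le r(\vu)} (y_{ij}(\vu) + u_j) = u_j + \min_{1 \le i \le r(\vu)} y_{ij}(\vu) = m_j(\vu) + u_j$, the middle step being the elementary fact that adding a constant to each element of a finite set shifts its minimum by that same constant. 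Collecting these equalities over $1 \le j \le s$ gives $\vm(-\vu) = \vm(\vu) + \vu$.

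Since neither step involves an estimate or a nontrivial construction, I do not anticipate a real obstacle; the single place where an error could creep in is the index alignment in the first part, and I would state the consistent-ordering convention explicitly at that point so that the matching of $\vy_i(-\vu)$ with $\vy_i(\vu) + \vu$ is unambiguous.
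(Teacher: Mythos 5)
Your proposal is correct and follows essentially the same argument as the paper: reversibility places the reverse reaction $\vy_i(\vu)+\vu \to \vy_i(\vu)$ in $\RR(-\vu)$, consistent ordering aligns the index $i$ so that $\vy_i(-\vu)=\vy_i(\vu)+\vu$, and the second identity follows componentwise since adding the constant $u_j$ shifts the minimum by $u_j$. The only difference is expository: you spell out the index-alignment and componentwise steps in more detail than the paper does.
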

\begin{proof}
If $\vy_i(\vu)$ is a reactant complex then $(\vy_i(\vu) \ra \vy_i(\vu) + \vu) \in \RR(\vu)$. This implies that $(\vy_i(\vu) + \vu \ra \vy_i(\vu)) \in \RR(-\vu)$. Since $\RR(\vu)$ and $\RR(-\vu)$ are consistently ordered, it then follows that $\vy_i(-\vu) = \vy_i(\vu) + \vu$. 

From the definition of $m_j(\vu)$, $m_j(-\vu) =  \min \{ y_{ij} (-\vu) | 1 \le i \le r(\vu) \} = \min \{ y_{ij} (\vu) + u_j | 1 \le i \le r(\vu) \} =  \min \{ y_{ij} (\vu) | 1 \le i \le r(\vu) \} + u_j$. The result then follows.
\end{proof}

\begin{lemma} \label{lem:extract} If $\vu$ is a reaction vector based at $\va$, then the rate of transition $\va \to \va + \vu$ is
\ba
\rho (\va, \va+\vu) = (\va)_{\vm(\vu)} \sum_{i=1}^{r(\vu)} k_i(\vu) (\va -\vm(\vu))_{\vy_i(\vu) -\vm(\vu)} 
\ea
\end{lemma}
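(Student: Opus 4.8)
The plan is to show that the claimed identity holds termwise in the sum defining $\rho(\va,\va+\vu)$, after which factoring the common factor $(\va)_{\vm(\vu)}$ out of the sum gives the result. Recalling that $\rho(\va,\va+\vu)=\sum_{i=1}^{r(\vu)}k_i(\vu)(\va)_{\vy_i(\vu)}$, it suffices to prove the single factorization
\[
(\va)_{\vy_i(\vu)} = (\va)_{\vm(\vu)}\,(\va-\vm(\vu))_{\vy_i(\vu)-\vm(\vu)}
\]
for each $i$; summing against $k_i(\vu)$ and pulling out $(\va)_{\vm(\vu)}$, which is independent of $i$, then yields exactly the statement.

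Before establishing the factorization I would record two facts that make the right-hand side well defined. First, by the definition $m_j(\vu)=\min_i y_{ij}(\vu)$ we have $\vm(\vu)\le\vy_i(\vu)$ for every $i$, so the exponent $\vy_i(\vu)-\vm(\vu)$ lies in $\Z_{\ge 0}^s$. Second, since $\vu$ is a reaction vector based at $\va$ there is a reactant complex, say $\vy_{i_0}(\vu)$, with $\va\ge\vy_{i_0}(\vu)\ge\vm(\vu)$; hence $\va-\vm(\vu)\in\Z_{\ge 0}^s$ and the base of the falling factorial is admissible.

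The factorization itself is a coordinatewise splitting of the falling factorial: for integers $a\ge m$ with $0\le m\le y$ one has $(a)_y=(a)_m\,(a-m)_{y-m}$, since $(a)_m$ supplies the factors $a,a-1,\dots,a-m+1$ while $(a-m)_{y-m}$ supplies the remaining factors $a-m,\dots,a-y+1$. Taking $a=a_j$, $m=m_j(\vu)$, $y=y_{ij}(\vu)$ and multiplying over $1\le j\le s$ gives the vector identity. Equivalently, for those $i$ with $\va\ge\vy_i(\vu)$ this is precisely the ``in particular'' case of Property~1 of Lemma~\ref{lem:calcs} applied with $\vx=\va$. The one point requiring care, and the main though minor obstacle, is the boundary case $m_j(\vu)\le a_j<y_{ij}(\vu)$: here $(a_j)_{y_{ij}(\vu)}=0$ because a zero factor appears in the product, and for the same reason $(a_j-m_j(\vu))_{y_{ij}(\vu)-m_j(\vu)}=0$, so the identity persists with both sides vanishing. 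Once these degenerate coordinates are checked to match, substituting the factorization into the sum and extracting $(\va)_{\vm(\vu)}$ completes the proof.
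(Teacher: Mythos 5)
Your proof is correct and follows essentially the same route as the paper: factor $(\va)_{\vm(\vu)}$ out of each term of the mass-action sum via the coordinatewise splitting of falling factorials, i.e.\ Property~1 of Lemma~\ref{lem:calcs}. If anything you are slightly more careful than the paper's own proof, which divides by $(\va)_{\vm(\vu)}$ and invokes Property~1 uniformly even for indices $i$ with $\va \not\ge \vy_i(\vu)$, where that property does not literally apply but both sides vanish --- precisely the degenerate case you check explicitly.
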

\begin{proof} By definition of $\vm(\vu)$, $\vy_i(\vu) \ge \vm(\vu)$ for all reaction vectors $\vu$. Since $\vu$ is a reaction vector based at $\va$, there is at least one $\vy_i(\vu)$ such that $\va \ge \vy_i(\vu)$. So that 
\begin{align*}
\rho (\va, \va+\vu) &= \sum_{i=1}^{r(\vu)} k_i(\vu) (\va)_{\vy_i(\vu)}  = (\va)_{\vm(\vu)} \sum_{i=1}^{r(\vu)} k_i(\vu)\frac{(\va)_{\vy_i(\vu)}}{(\va)_{\vm(\vu)}} \\
& = (\va)_{\vm(\vu)} \sum_{i=1}^{r(\vu)} k_i(\vu) (\va -\vm(\vu))_{\vy_i(\vu) -\vm(\vu)} 
\end{align*}
where in the last step we used Property 1 of Lemma \ref{lem:calcs}. 
\end{proof}
\begin{lemma} \label{lem:fact} If $\vu$ is a reversible reaction vector based at $\va$ then 
\ba
\frac{(\va)_{\vm(\vu)}}{(\va + \vu)_{\vm(-\vu)}} = \frac{\va!}{(\va + \vu)!}
\ea
\end{lemma}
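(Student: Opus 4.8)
The plan is to reduce the identity to a single direct application of Property 2 of Lemma~\ref{lem:calcs}, after first rewriting the subscript $\vm(-\vu)$ in the denominator. First I would invoke part 2 of Lemma~\ref{lem:back}, which gives $\vm(-\vu) = \vm(\vu) + \vu$, so that the left-hand side becomes
\[
\frac{(\va)_{\vm(\vu)}}{(\va + \vu)_{\vm(\vu) + \vu}}.
\]
This is now exactly of the form treated in Property 2 of Lemma~\ref{lem:calcs}, upon setting that lemma's variable $\vx$ to our state $\va$, its subscript vector to $\vm(\vu)$, and keeping its shift equal to $\vu$.

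To apply Property 2, I must verify its two hypotheses, namely $\va \ge \vm(\vu)$ and $\va + \vu \ge \v0$. Both follow from the assumption that $\vu$ is a reaction vector based at $\va$: there is a reaction $(\vy \ra \vy') \in \RR(\vu)$, say with reactant complex $\vy = \vy_i(\vu)$, such that $\va \ge \vy_i(\vu)$. Since $\vm(\vu)$ is the componentwise minimum of the reactant stoichiometric coefficients over $\RR(\vu)$, we have $\vy_i(\vu) \ge \vm(\vu)$, and hence $\va \ge \vy_i(\vu) \ge \vm(\vu)$, giving the first hypothesis. For the second, adding $\vu$ to $\va \ge \vy_i(\vu)$ yields $\va + \vu \ge \vy_i(\vu) + \vu = \vy_i(\vu)'$, which is the product complex and therefore lies in $\Z_{\ge 0}^s$; thus $\va + \vu \ge \v0$.

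With both hypotheses established, Property 2 of Lemma~\ref{lem:calcs} gives directly
\[
\frac{(\va)_{\vm(\vu)}}{(\va + \vu)_{\vm(\vu) + \vu}} = \frac{\va!}{(\va + \vu)!},
\]
which is the claimed identity. The argument is essentially bookkeeping: the only content is the substitution from Lemma~\ref{lem:back} together with the verification of the two nonnegativity conditions, both of which are immediate consequences of $\vu$ being based at $\va$. I therefore do not anticipate any genuine obstacle, since the statement is a specialization of the already-proved Property 2 rather than an independent computation; the one point requiring minor care is the notational clash between our state $\va$ and the dummy subscript in Lemma~\ref{lem:calcs}, which the substitution above resolves.
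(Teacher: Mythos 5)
Your proof is correct and follows essentially the same route as the paper: substitute $\vm(-\vu) = \vm(\vu) + \vu$ via Lemma \ref{lem:back}, check the hypotheses $\va \ge \vm(\vu)$ and $\va + \vu \ge \v0$, and apply Property 2 of Lemma \ref{lem:calcs}. The only (inessential) difference is how the nonnegativity $\va + \vu \ge \v0$ is justified — you add $\vu$ to $\va \ge \vy_i(\vu)$ and cite nonnegativity of the product complex, whereas the paper notes that $-\vu$ is a reaction vector based at $\va + \vu$; both are immediate.
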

\begin{proof}
By Lemma \ref{lem:back}, $\vm(-\vu) = \vm(\vu) + \vu$. Since $\vu$ is a reaction vector based at $\va$, there is at least one $\vy_i(\vu)$ such that $\va \ge \vy_i(\vu) \ge \vm(\vu)$. Further since $\vu$ is a reversible reaction vector based at $\va$, it must be that $-\vu$ is a reaction vector based at $\va + \vu$ and so $\va + \vu \ge \v0$. The result then follows immediately from Property 2 of Lemma \ref{lem:calcs}. 
\end{proof}
\begin{lemma} \label{lem:prodone} For a reversible reaction cycle $(\vu_1, \vu_2, \ldots, \vu_{c})$ based at $\va$, 
\ba
\prod_{j=1}^c \frac{(\va+ \vu_1 + \ldots + \vu_{j-1})_{\vm(\vu_j)}}{(\va+ \vu_1 + \ldots + \vu_{j})_{\vm(-\vu_j)}} = 1
\ea
\end{lemma}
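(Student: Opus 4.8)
The plan is to recognize the product as telescoping once each factor is rewritten via Lemma \ref{lem:fact}. First I would introduce the shorthand $\va_j := \va + \vu_1 + \cdots + \vu_j$ for $0 \le j \le c$, so that $\va_0 = \va$ and, crucially, $\va_c = \va + \sum_{i=1}^c \vu_i = \va$, since $(\vu_1, \ldots, \vu_c)$ is a reaction cycle and hence $\sum_{i=1}^c \vu_i = \v0$. With this notation the $j$-th factor of the product is exactly $\dfrac{(\va_{j-1})_{\vm(\vu_j)}}{(\va_j)_{\vm(-\vu_j)}}$, because $\va_{j-1} + \vu_j = \va_j$.

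The key step is to apply Lemma \ref{lem:fact} to each factor. Since $(\vu_1, \ldots, \vu_c; \va)$ is a reversible reaction cycle based at $\va$, by definition $\vu_j$ is a reaction vector based at $\va_{j-1}$ and $-\vu_j$ is a reaction vector based at $\va_{j-1} + \vu_j = \va_j$; that is, $\vu_j$ is a reversible reaction vector based at $\va_{j-1}$, which is precisely the hypothesis required by Lemma \ref{lem:fact}. Applying that lemma with reaction vector $\vu_j$ and base point $\va_{j-1}$ yields
\[
\frac{(\va_{j-1})_{\vm(\vu_j)}}{(\va_j)_{\vm(-\vu_j)}} = \frac{\va_{j-1}!}{\va_j!}.
\]
Substituting into the product collapses it telescopically:
\[
\prod_{j=1}^c \frac{(\va_{j-1})_{\vm(\vu_j)}}{(\va_j)_{\vm(-\vu_j)}} = \prod_{j=1}^c \frac{\va_{j-1}!}{\va_j!} = \frac{\va_0!}{\va_c!} = \frac{\va!}{\va!} = 1,
\]
where the final equality uses the cyclic closure $\va_c = \va_0 = \va$.

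Since the analytic content is entirely carried by Lemma \ref{lem:fact}, there is no real obstacle here; the only points requiring care are bookkeeping ones. I would verify at each index $j$ that the hypotheses of Lemma \ref{lem:fact} genuinely hold --- namely that $\vu_j$ is a reversible reaction vector based at the running partial sum $\va_{j-1}$ --- which is exactly what the definition of a reversible reaction cycle based at $\va$ supplies, term by term. The one genuinely essential structural input is the cyclic closure $\va_c = \va_0$ coming from $\sum_i \vu_i = \v0$; this is what forces the telescoped product to equal $1$ rather than some residual boundary ratio $\va_0!/\va_c!$.
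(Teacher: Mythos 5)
Your proof is correct and follows the same route as the paper: apply Lemma \ref{lem:fact} factor by factor (its hypotheses being supplied term by term by the definition of a reversible reaction cycle based at $\va$), turning each factor into $\va_{j-1}!/\va_j!$, so the product telescopes to $1$ via the cyclic closure $\sum_i \vu_i = \v0$. You merely spell out the telescoping and closure more explicitly than the paper does.
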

\begin{proof} By definition of a reversible reaction cycle based at $\va$, for all $1 \le j \le c$, $\vu_j$ is a reversible reaction vector based at $\va+\vu_1+ \ldots + \vu_{j-1}$. So by Lemma \ref{lem:fact}, 
\[
\prod_{j=1}^c \frac{(\va+ \vu_1 + \ldots + \vu_{j-1})_{\vm(\vu_j)}}{(\va+ \vu_1 + \ldots + \vu_{j})_{\vm(-\vu_j)}} = \prod_{j=1}^c \frac{(\va+ \vu_1 + \ldots + \vu_{j-1})!}{(\va+ \vu_1 + \ldots + \vu_{j})!}  = 1. 
\]
\end{proof}
\begin{lemma} \label{lem:simpuuu} For a reversible reaction cycle $(\vu_1, \ldots, \vu_c)$ based at $\va$
\ba \label{eq:simpuuu}
\UUU((\vu_1, \ldots, \vu_c; \va)) = \prod_{j=1}^{c}\frac{\sum_{i=1}^{r(\vu_j)} k_i (\vu_j) (\va + \vu_1 + \ldots + \vu_{j-1} - \vm(\vu_j))_{\vy_i(\vu_j) - \vm(\vu_j)}}{\sum_{i=1}^{r(\vu_j)} k_i (-\vu_j) (\va + \vu_1 + \ldots + \vu_{j-1} - \vm(\vu_j))_{\vy_i(\vu_j) - \vm(\vu_j)}}
\ea
\end{lemma}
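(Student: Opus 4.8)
The plan is to handle each of the $c$ factors in the definition \eqref{eq:functional} of $\UUU$ separately, extract a falling-factorial prefactor from its numerator and denominator via Lemma \ref{lem:extract}, and then show that these prefactors telescope to $1$ over the whole cycle using Lemma \ref{lem:prodone}. Abbreviate the base point of the $j$-th transition by ${\vect w}_{j-1} := \va + \vu_1 + \cdots + \vu_{j-1}$ and its target by ${\vect w}_j := {\vect w}_{j-1} + \vu_j$. Then the $j$-th numerator in \eqref{eq:functional} is the forward rate $\rho({\vect w}_{j-1}, {\vect w}_{j-1} + \vu_j)$, while the $j$-th denominator is the reverse rate, namely the rate of $-\vu_j$ based at ${\vect w}_j$. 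Because $(\vu_1, \ldots, \vu_c; \va)$ is a reversible reaction cycle based at $\va$, each $\vu_j$ is a reversible reaction vector based at ${\vect w}_{j-1}$, so $-\vu_j$ is a reaction vector based at ${\vect w}_j$ and Lemma \ref{lem:extract} applies to both the numerator and the denominator.

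First I would apply Lemma \ref{lem:extract} to the numerator, writing it as $({\vect w}_{j-1})_{\vm(\vu_j)}$ times the residual sum $\sum_i k_i(\vu_j)({\vect w}_{j-1} - \vm(\vu_j))_{\vy_i(\vu_j) - \vm(\vu_j)}$, and apply it again to the denominator (with reaction vector $-\vu_j$ based at ${\vect w}_j$), writing that as $({\vect w}_j)_{\vm(-\vu_j)}$ times $\sum_i k_i(-\vu_j)({\vect w}_j - \vm(-\vu_j))_{\vy_i(-\vu_j) - \vm(-\vu_j)}$. Next I would feed in Lemma \ref{lem:back}, which supplies $\vy_i(-\vu_j) = \vy_i(\vu_j) + \vu_j$ and $\vm(-\vu_j) = \vm(\vu_j) + \vu_j$. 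Substituting these, the additive shift by $\vu_j$ cancels in both the base and the falling-factorial index of the denominator's residual sum: ${\vect w}_j - \vm(-\vu_j) = {\vect w}_{j-1} - \vm(\vu_j)$ and $\vy_i(-\vu_j) - \vm(-\vu_j) = \vy_i(\vu_j) - \vm(\vu_j)$. Hence the residual sum in the denominator carries exactly the same falling factorial $({\vect w}_{j-1} - \vm(\vu_j))_{\vy_i(\vu_j) - \vm(\vu_j)}$ as the one in the numerator, and the two residual sums differ only in whether the rate constant is $k_i(\vu_j)$ or $k_i(-\vu_j)$.

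With this rewriting, each factor of $\UUU$ splits as the product of a pure falling-factorial ratio $({\vect w}_{j-1})_{\vm(\vu_j)} / ({\vect w}_j)_{\vm(-\vu_j)}$ and the ratio of the two residual sums. Taking the product over $j = 1, \ldots, c$ and separating these two pieces, the product of the falling-factorial ratios is precisely the left-hand side of Lemma \ref{lem:prodone}, and therefore equals $1$. What survives is exactly the product of residual-sum ratios appearing on the right-hand side of \eqref{eq:simpuuu}, which completes the argument.

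The whole computation is bookkeeping assembled from the earlier lemmas, so no single step is a serious obstacle; the only point demanding care is the justification for applying Lemma \ref{lem:extract} to the \emph{reverse} rate. Concretely, one must be sure that $-\vu_j$ really is a reaction vector based at ${\vect w}_j$ (so that Lemma \ref{lem:extract} is applicable there) and that ${\vect w}_{j-1} \ge \vm(\vu_j)$ (so the extracted falling factorials are positive and the division is legitimate) --- both of which are built into the definition of a reversible reaction cycle based at $\va$ and were already exploited in proving Lemma \ref{lem:prodone}.
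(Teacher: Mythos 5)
Your proposal is correct and follows essentially the same route as the paper: apply Lemma \ref{lem:extract} to both numerator and denominator of each factor, use Lemma \ref{lem:back} to identify the denominator's residual sum with the numerator's falling factorials, and cancel the extracted prefactors across the cycle via Lemma \ref{lem:prodone}. The only difference is the (immaterial) order in which Lemma \ref{lem:back} and Lemma \ref{lem:prodone} are invoked, and your closing remark on the applicability of Lemma \ref{lem:extract} to the reverse rates matches what the paper leaves implicit.
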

\begin{proof} By definition of $\UUU$, 
\begin{align*}
& \UUU((\vu_1, \ldots, \vu_c; \va)) = \prod_{j=1}^{c}\frac{\sum_{i=1}^{r(\vu_j)} k_i (\vu_j) (\va + \vu_1 + \ldots + \vu_{j-1} )_{\vy_i(\vu_j) }}{\sum_{i=1}^{r(\vu_j)} k_i (-\vu_j) (\va + \vu_1 + \ldots + \vu_{j} )_{\vy_i(-\vu_j) }} \\
&= \prod_{j=1}^{c}\frac{(\va + \vu_1 + \ldots + \vu_{j-1})_{\vm(\vu_j)}\sum_{i=1}^{r(\vu_j)} k_i (\vu_j) (\va + \vu_1 + \ldots + \vu_{j-1} -\vm(\vu_j))_{\vy_i(\vu_j) -\vm(\vu_j) }}{(\va + \vu_1 + \ldots + \vu_{j})_{\vm(-\vu_j)}\sum_{i=1}^{r(\vu_j)} k_i (-\vu_j) (\va + \vu_1 + \ldots + \vu_{j} -\vm(-\vu_j))_{\vy_i(-\vu_j) -\vm(-\vu_j) }} \\
&= \prod_{j=1}^{c}\frac{\sum_{i=1}^{r(\vu_j)} k_i (\vu_j) (\va + \vu_1 + \ldots + \vu_{j-1} -\vm(\vu_j))_{\vy_i(\vu_j) -\vm(\vu_j) }}{\sum_{i=1}^{r(\vu_j)} k_i (-\vu_j) (\va + \vu_1 + \ldots + \vu_{j} -\vm(-\vu_j))_{\vy_i(-\vu_j) -\vm(-\vu_j) }} \\
&= \prod_{j=1}^{c}\frac{\sum_{i=1}^{r(\vu_j)} k_i (\vu_j) (\va + \vu_1 + \ldots + \vu_{j-1} - \vm(\vu_j))_{\vy_i(\vu_j) - \vm(\vu_j)}}{\sum_{i=1}^{r(\vu_j)} k_i (-\vu_j) (\va + \vu_1 + \ldots + \vu_{j-1} - \vm(\vu_j))_{\vy_i(\vu_j) - \vm(\vu_j)}}
\end{align*}
The first line is equation \eqref{eq:functional}, the second line follows from Lemma \ref{lem:extract}, the third line from Lemma \ref{lem:prodone} and the last line from Lemma \ref{lem:back}.  
\end{proof}

We introduce some notation which will simplify the appearance of $\UUU((\vu_1, \ldots, \vu_c; \va))$. 

\beit
\item To each reaction vector $\vu$ based at $\va$ we associate a vector of length $r(\vu)$ via the following definition:
\ba
& \FFF(\vu, \va) : =  \left<\fff_1(\vu,\va), \fff_2(\vu,\va), \ldots, \fff_{r(\vu)}(\vu,\va)  \right> \nonumber \\
& := \left< (\va - \vm(\vu))_{\vy_1(\vu) - \vm(\vu)}, (\va - \vm(\vu))_{\vy_2(\vu) - \vm(\vu)}, \ldots, (\va - \vm(\vu))_{\vy_{r(\vu)}(\vu) - \vm(\vu)} \right>
\ea
We state some simple but useful properties of $\FFF$ as a lemma.
\begin{lemma} \label{lem:propsofFFF}
For a reversible reaction vector $\vu$ based at $\va$, 
\been[(i)]
\item $\FFF(-\vu, \va+\vu) = \FFF(\vu,\va)$.  
\item If $r(\vu) = r(-\vu) =1$, then $\FFF(\vu,\va) = \left<1\right> $ for all $\va \in \Z_{\ge 0}^s$. 
\enen
\end{lemma}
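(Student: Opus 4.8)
The plan is to verify each of the two properties by direct substitution into the definition of $\FFF$, leaning entirely on the bookkeeping identities already established in Lemma \ref{lem:back}. Both are essentially computational, so the work is to track the shift-by-$\vu$ in the vector arguments carefully.

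For part (i), I would write down the $i$-th component of $\FFF(-\vu, \va + \vu)$ straight from the definition, namely $\fff_i(-\vu, \va+\vu) = (\va + \vu - \vm(-\vu))_{\vy_i(-\vu) - \vm(-\vu)}$, and then apply the two conclusions of Lemma \ref{lem:back}. Substituting $\vm(-\vu) = \vm(\vu) + \vu$ into the base collapses $\va + \vu - \vm(-\vu)$ to $\va - \vm(\vu)$, while substituting both $\vy_i(-\vu) = \vy_i(\vu) + \vu$ and $\vm(-\vu) = \vm(\vu) + \vu$ into the subscript collapses $\vy_i(-\vu) - \vm(-\vu)$ to $\vy_i(\vu) - \vm(\vu)$. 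The two shifts by $\vu$ cancel in both slots, so the $i$-th component is exactly $\fff_i(\vu, \va)$. Since this holds entrywise for $1 \le i \le r(\vu) = r(-\vu)$, the two vectors coincide; here I use that reversibility forces $r(-\vu) = r(\vu)$, so the vectors have equal length and the consistent ordering pairs up the $i$-th entries.

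For part (ii), the key observation is that $r(\vu) = 1$ means the minimum in the definition of $\vm(\vu)$ is taken over a single reactant complex, so $m_j(\vu) = y_{1j}(\vu)$ for every $j$, i.e. $\vm(\vu) = \vy_1(\vu)$. Then the single entry of $\FFF(\vu, \va)$ is $\fff_1(\vu, \va) = (\va - \vy_1(\vu))_{\vy_1(\vu) - \vm(\vu)} = (\va - \vy_1(\vu))_{\v0}$, and the falling factorial with zero exponent vector equals $1$ by the definition of the falling factorial. Since this value never involves $\va$, it is $1$ for every $\va \in \Z_{\ge 0}^s$. The hypothesis $r(-\vu) = 1$ is automatic from reversibility and is recorded only to make explicit that $\FFF$ has length one on both sides.

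Neither part presents a genuine obstacle; the only thing to be careful about is invoking the ``based at $\va$'' hypothesis, which guarantees $\va \ge \vy_i(\vu) \ge \vm(\vu)$ for at least one $i$ so that the falling factorials are the intended quantities rather than degenerate, and then keeping the $\vu$-shifts straight when applying Lemma \ref{lem:back}. I would present each verification in a couple of lines, citing Lemma \ref{lem:back} for part (i) and the singleton-minimum computation for part (ii).
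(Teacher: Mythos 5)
Your proof is correct and follows the paper's own argument exactly: part (i) by componentwise substitution of the two identities in Lemma \ref{lem:back}, and part (ii) from the observation that $r(\vu)=1$ forces $\vm(\vu)=\vy_1(\vu)$, making the falling factorial $(\va-\vm(\vu))_{\v0}=1$. The paper states this in one line; your write-up simply makes the same bookkeeping explicit.
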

\begin{proof}
The first result follows easily from Lemma \ref{lem:back} and the second result follows from the fact that if $r(\vu)=1$ then $\vy_1(\vu) = \vm(\vu)$.
\end{proof}

\item Let $\va_j := \va + \sum_{k=1}^{j} \vu_k$. So that to each reaction cycle $(\vu_1, \vu_2, \ldots, \vu_c)$ based at $\va$ we associate the sequence $(\va_0, \va_1, \va_2, \ldots, \va_{c-1}) = (\va, \va + \vu_1, \va + \vu_1 + \vu_2, \ldots , \va + \vu_1 + \ldots + \vu_{c-1})$. 
\item These definitions result in the following compactified version of \eqref{eq:simpuuu}
\ba \label{eq:shortuuu}
\UUU((\vu_1, \ldots, \vu_c; \va)) = \prod_{j=1}^{c}\frac{\sum_{i=1}^{r(\vu_j)} k_i (\vu_j)\fff_i(\vu_j, \va_{j-1})}{\sum_{i=1}^{r(\vu_j)} k_i (- \vu_j)\fff_i(\vu_j, \va_{j-1})} = \prod_{j=1}^{c} \frac{\vk(\vu_j) \cdot \FFF(\vu_j, \va_{j-1})}{\vk(-\vu_j) \cdot \FFF(\vu_j, \va_{j-1})}
\ea
\enit
Here the symbol ``~$\cdot$~" denotes the usual scalar product. Thus the condition for MCDB is that for every reversible reaction cycle $(\vu_1, \vu_2, \ldots, \vu_c)$ based at $\va$, we must have that: 
\ba \label{eq:MCDBcond}
\UUU((\vu_1, \ldots, \vu_c; \va))  = \prod_{j=1}^{c} \frac{\vk(\vu_j) \cdot \FFF(\vu_j, \va_{j-1})}{\vk(-\vu_j) \cdot \FFF(\vu_j, \va_{j-1})} = 1
\ea
where $\FFF(\vu, \va) = \left< (\va - \vm(\vu))_{\vy_1(\vu) - \vm(\vu)},  \ldots, (\va - \vm(\vu))_{\vy_{r(\vu)}(\vu) - \vm(\vu)} \right>$. It is clear that in order to show MCDB, it suffices to check conditions \eqref{eq:MCDBcond} for all irreducible, nontrivial, reversible reaction cycles. 

%
%

\subsection{RNDB implies MCDB}

We now possess the technology required to prove the main result of the article which is that RNDB implies MCDB.

\begin{theorem} \label{thm:main}
Suppose that a reversible reaction network $(\SS, \CC, \RR)$ satisfies reaction network detailed balance. Then $(\SS, \CC, \RR)$ satisfies Markov chain detailed balance.
\end{theorem}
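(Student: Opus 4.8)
The plan is to verify the Kolmogorov cycle condition directly in the compactified form already prepared in the preliminaries. By the discussion following \eqref{eq:MCDBcond}, the network satisfies MCDB precisely when $\UUU((\vu_1, \ldots, \vu_c; \va)) = 1$ for every reversible reaction cycle $(\vu_1, \ldots, \vu_c)$ based at every $\va \in \Z_{\ge 0}^s$. The structural fact I will exploit is Lemma \ref{lem:simpuuu}: in the simplified expression \eqref{eq:shortuuu}, the $j$-th factor is $\frac{\vk(\vu_j) \cdot \FFF(\vu_j, \va_{j-1})}{\vk(-\vu_j) \cdot \FFF(\vu_j, \va_{j-1})}$, in which the numerator and denominator are paired against the \emph{same} vector $\FFF(\vu_j, \va_{j-1})$. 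It is precisely this pairing that will let the state-dependent falling factorials cancel under the RNDB hypothesis.

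First I would invoke the rate-constant characterization of RNDB, Theorem \ref{thm:RNDBrateconds}. Condition \eqref{eq:ratecond1} says that for each reaction vector $\vu_j$ the ratios $k_i(\vu_j)/k_i(-\vu_j)$ are independent of $i$; write $\lambda(\vu_j)$ for this common value, so that $k_i(\vu_j) = \lambda(\vu_j)\, k_i(-\vu_j)$ for every $1 \le i \le r(\vu_j)$. Substituting into the $j$-th factor gives
\[
\frac{\sum_{i} k_i(\vu_j)\, \fff_i(\vu_j, \va_{j-1})}{\sum_{i} k_i(-\vu_j)\, \fff_i(\vu_j, \va_{j-1})} = \frac{\lambda(\vu_j) \sum_{i} k_i(-\vu_j)\, \fff_i(\vu_j, \va_{j-1})}{\sum_{i} k_i(-\vu_j)\, \fff_i(\vu_j, \va_{j-1})} = \lambda(\vu_j),
\]
where the common sum in numerator and denominator cancels. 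I should record that this cancellation is legitimate because the denominator is strictly positive: $\vu_j$ is a reaction vector based at $\va_{j-1}$, so by the argument in Lemma \ref{lem:extract} there is at least one index $i$ with $\fff_i(\vu_j, \va_{j-1}) = (\va_{j-1} - \vm(\vu_j))_{\vy_i(\vu_j) - \vm(\vu_j)} > 0$, and all of the $k_i(-\vu_j)$ are positive.

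Consequently $\UUU((\vu_1, \ldots, \vu_c; \va)) = \prod_{j=1}^c \lambda(\vu_j) = \prod_{j=1}^c \frac{k_{q(\vu_j)}(\vu_j)}{k_{q(\vu_j)}(-\vu_j)}$ for any choice of representatives $q(\vu_j)$. Since a reversible reaction cycle based at $\va$ is in particular a reaction cycle, i.e. $\sum_{j=1}^c \vu_j = \v0$, the circuit condition \eqref{eq:ratecond2} of Theorem \ref{thm:RNDBrateconds} applies and forces this product to equal $1$. As $\va$ and the cycle were arbitrary, KCC holds on every cycle of the Markov chain and MCDB follows. The main conceptual work is therefore not in this final theorem at all but in the preceding lemmas --- above all Lemma \ref{lem:simpuuu}, whose role is to rewrite $\UUU$ so that numerator and denominator share the vector $\FFF(\vu_j, \va_{j-1})$; once that normalization is in hand, the two RNDB conditions \eqref{eq:ratecond1} and \eqref{eq:ratecond2} collapse $\UUU$ to a product that is identically one, and the state dependence disappears entirely.
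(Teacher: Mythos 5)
Your proposal is correct and follows essentially the same route as the paper's own proof: both use the compactified expression \eqref{eq:shortuuu} from Lemma \ref{lem:simpuuu}, apply condition \eqref{eq:ratecond1} to collapse each factor to the common ratio $k_i(\vu_j)/k_i(-\vu_j)$ (your $\lambda(\vu_j)$ factorization is algebraically the same as the paper's normalization by $k_1(\vu_j)$ and $k_1(-\vu_j)$), and then invoke the circuit condition \eqref{eq:ratecond2} to conclude the product is $1$. Your explicit remark that the shared denominator $\vk(-\vu_j)\cdot\FFF(\vu_j,\va_{j-1})$ is strictly positive is a small point of rigor the paper leaves implicit, but it does not change the argument.
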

\begin{proof} 
Let $(\vu_1,\vu_2, \ldots, \vu_c)$ be a reaction cycle based at $\va$. For $1 \le j \le c$, 
\ba
\frac{ \vk (\vu_j)\cdot \FFF(\vu_j,\va_{j-1})}{\vk (- \vu_j)\cdot \FFF(\vu_j,\va_{j-1})} = \frac{k_{1}(\vu_j)}{k_{1}(-\vu_j)}\frac{\frac{\vk (\vu_j)}{k_{1}(\vu_j)}\cdot \FFF(\vu_j,\va_{j-1})}{\frac{\vk (- \vu_j)}{k_{1}(- \vu_j)} \cdot \FFF(\vu_j,\va_{j-1})} =  \frac{k_{1}(\vu_j)}{k_{1}(-\vu_j)}. 
\ea
where we used RNDB condition \eqref{eq:ratecond1} from Theorem \ref{thm:RNDBrateconds} to get the last equality above.  
\ba
\UUU((\vu_1, \ldots, \vu_c; \va)) = \prod_{j=1}^{c}\frac{k_1 (\vu_j)}{k_1 (-\vu_j)} = 1
\ea
where the last equality follows from the RNDB condition \eqref{eq:ratecond2}. 
\end{proof}

Thus reaction network detailed balance in a reversible chemical reaction network endowed with mass-action kinetics implies Markov chain detailed balance. The converse is not true in general. We study some instances where the converse holds, followed by instances where the converse does not hold. 

\subsection{When does MCDB imply RNDB?}

Even though MCDB does not imply RNDB in general, the two are in fact equivalent for a vast majority of networks. There is at least one commonly occurring case where the conditions for the equivalence can be checked at a glance. This occurs when there is exactly one reaction corresponding to each reaction vector. We state the result as Theorem \ref{thm:oneveconerxn}. 

\begin{theorem}  \label{thm:oneveconerxn}
Suppose that $r(\vu) =1$ for all $\vu \in V(\RR)$ in a reversible chemical reaction network $(\SS,\CC,\RR)$. Then $(\SS,\CC,\RR)$ has reaction network detailed balance if and only if it has Markov chain detailed balance. 
\end{theorem}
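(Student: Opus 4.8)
The forward implication RNDB $\implies$ MCDB is already in hand: it is exactly Theorem \ref{thm:main}, which holds for every reversible network irrespective of the values $r(\vu)$. So the entire content to be proved is the converse under the hypothesis $r(\vu) = 1$ for all $\vu \in V(\RR)$.

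My plan is to collapse the MCDB functional into a bare product of rate-constant ratios and then recognize it as the RNDB circuit condition. Since $r(\vu) = 1$ for every reaction vector, Lemma \ref{lem:propsofFFF}(ii) gives $\FFF(\vu, \va) = \langle 1 \rangle$ for all $\va$, while $\vk(\vu) = (k_1(\vu))$ and $\vk(-\vu) = (k_1(-\vu))$ are scalars. Feeding this into \eqref{eq:MCDBcond}, the functional for any reversible reaction cycle $(\vu_1, \ldots, \vu_c)$ based at $\va$ reduces to
\[
\UUU((\vu_1, \ldots, \vu_c; \va)) = \prod_{j=1}^{c} \frac{k_1(\vu_j)}{k_1(-\vu_j)},
\]
which no longer depends on the base point $\va$. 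This expression is precisely the left-hand side of the RNDB circuit condition \eqref{eq:ratecond2}; moreover condition \eqref{eq:ratecond1} is vacuous here because each reaction vector contributes a single ratio. Thus, modulo one issue, MCDB ($\UUU = 1$ on every based cycle) and RNDB (condition \eqref{eq:ratecond2} on every reaction cycle) assert the identical family of equalities.

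The step I expect to be the crux is \emph{realizability}: to conclude RNDB from MCDB I must ensure that every abstract reaction cycle appearing in \eqref{eq:ratecond2} actually occurs as a reversible reaction cycle based at some admissible state, for otherwise MCDB would only constrain a proper subfamily. I would argue this directly. Fix a reversible reaction cycle $(\vu_1, \ldots, \vu_c)$; since the network is reversible each $\vu_j$ is a reversible reaction vector with a unique reactant complex $\vy_1(\vu_j)$. Writing $\va_{j-1} = \va + \sum_{k=1}^{j-1}\vu_k$, the forward feasibility requirement is $\va_{j-1} \ge \vy_1(\vu_j)$, and by Lemma \ref{lem:back} (which gives $\vy_1(-\vu_j) = \vy_1(\vu_j) + \vu_j$) the backward requirement $\va_j \ge \vy_1(-\vu_j)$ reduces to the same inequality. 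As the finitely many partial sums $\sum_{k=1}^{j-1}\vu_k$ are bounded below, taking each coordinate of $\va$ sufficiently large makes every inequality hold, so the cycle is based at $\va$. Hence MCDB forces $\prod_{j=1}^{c} k_1(\vu_j)/k_1(-\vu_j) = 1$ for every reaction cycle, which by Theorem \ref{thm:RNDBrateconds} is exactly RNDB, completing the converse and the proof.
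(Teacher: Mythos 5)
Your proposal is correct and follows essentially the same route as the paper: both substitute $r(\vu_j)=1$ into the simplified functional \eqref{eq:shortuuu} and invoke Lemma \ref{lem:propsofFFF} to collapse $\UUU((\vu_1,\ldots,\vu_c;\va))$ to the base-point-independent product $\prod_{j=1}^{c} k_1(\vu_j)/k_1(-\vu_j)$, which is then identified with the circuit condition \eqref{eq:ratecond2}. Your additional realizability argument --- that every abstract reaction cycle can in fact be based at a state $\va$ with sufficiently large coordinates (using Lemma \ref{lem:back} to reduce the backward feasibility inequalities to the forward ones), so that MCDB genuinely forces \eqref{eq:ratecond2} for \emph{all} reaction cycles rather than only the realized ones --- is a point the paper's proof leaves implicit, and making it explicit strengthens the converse direction.
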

\begin{proof}
Consider a reaction cycle $(\vu_1, \vu_2, \ldots, \vu_c)$ based at $\va$. Substituting $r(\vu_j) =1$ in \eqref{eq:shortuuu} and using Lemma \ref{lem:propsofFFF}, we find that 
\begin{align*}
\UUU((\vu_1, \ldots, \vu_c; \va)) = \prod_{j=1}^{c}\frac{ k_i (\vu_j)}{ k_i (- \vu_j)}
\end{align*}
which is equal to $1$ if and only if RNDB is satisfied (by RNDB condition \eqref{eq:ratecond2}). This shows that RNDB and MCDB are equivalent. 
\end{proof}

\subsection{Which networks have MCDB but do not have RNDB?}

In a reversible reaction network, if there is only one pair of reversible reaction vectors $(\vu, -\vu)$ and there is more than one reaction whose reaction vector is $\vu$, MCDB holds independent of any constraints, but RNDB fails to hold in general. 


\begin{theorem} \label{thm:b&d}
Consider a reversible reaction network $(\SS,\CC,\RR)$ with $V(\RR) = \{\vu, -\vu\}$. Then $(\SS,\CC,\RR)$ satisfies Markov chain detailed balance for all reaction rate constants. 
\end{theorem}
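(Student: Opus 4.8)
The plan is to verify the Markov chain detailed balance condition in the reduced form \eqref{eq:MCDBcond}, that is, to show $\UUU(U;\va)=1$ for every reversible reaction cycle $U=(\vu_1,\ldots,\vu_c)$ based at every $\va$, exploiting the fact that $V(\RR)=\{\vu,-\vu\}$ forces the stoichiometric module to be one-dimensional. Geometrically this means that within each stoichiometric compatibility class the Markov chain lives on a single line $\{\va_0+n\vu\}$, i.e.\ it is a birth-and-death process, so every cycle of reaction vectors is built solely from the two symbols $\vu$ and $-\vu$. First I would record the base case: for the trivial cycle $(\vu,-\vu)$ based at $\va$, equation \eqref{eq:shortuuu} gives $\UUU((\vu,-\vu;\va))=\frac{\vk(\vu)\cdot\FFF(\vu,\va)}{\vk(-\vu)\cdot\FFF(\vu,\va)}\cdot\frac{\vk(-\vu)\cdot\FFF(-\vu,\va+\vu)}{\vk(\vu)\cdot\FFF(-\vu,\va+\vu)}$, and since Lemma~\ref{lem:propsofFFF}(i) gives $\FFF(-\vu,\va+\vu)=\FFF(\vu,\va)$, the two factors are reciprocals and their product is $1$, identically in the rate constants.

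The core of the argument is then a decomposition showing that every reaction cycle reduces to a concatenation of such trivial cycles. Because each $\vu_i\in\{\vu,-\vu\}$ and $\sum_i\vu_i=\v0$ forces equally many copies of $\vu$ and $-\vu$, the cyclic sequence $(\vu_1,\ldots,\vu_c)$ contains both symbols whenever $c\ge2$; hence some cyclically consecutive pair equals $(\vu,-\vu)$, since if every $\vu$ were followed by another $\vu$ the whole cyclic sequence would be constant. When $c>2$ this is a trivial \emph{proper} consecutive sub-cycle, and I would induct on $c$ by peeling it off. The key point making the induction work is that removing the consecutive pair $(\vu_j,\vu_{j+1})=(\vu,-\vu)$, whose reaction vectors sum to $\v0$, returns the partial-sum walk to its base point $\va_{j-1}$; consequently the base points $\va_{l-1}$ attached to all surviving factors in \eqref{eq:shortuuu} are exactly those of the original product, so $\UUU$ factors cleanly as $\UUU((\vu,-\vu;\va_{j-1}))$ times $\UUU$ of the length-$(c-2)$ cycle. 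The first factor is $1$ by the base case and the second is $1$ by the inductive hypothesis, yielding $\UUU(U;\va)=1$.

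Since this holds for every reversible reaction cycle based at every $\va\in\Z_{\ge0}^s$, condition \eqref{eq:MCDBcond} is satisfied identically in the rate constants, and therefore $(\SS,\CC,\RR)$ possesses Markov chain detailed balance for all reaction rate constants. The main obstacle I anticipate is purely organizational: making the factorization of $\UUU$ under removal of a consecutive sub-cycle fully precise, in particular checking that the surviving base points coincide with the original ones so that no residual $\FFF$-dependence survives. This is exactly where the one-dimensionality of the walk and the identity $\FFF(-\vu,\va+\vu)=\FFF(\vu,\va)$ do the work, so beyond careful indexing no genuine difficulty is expected.
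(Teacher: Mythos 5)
Your proof is correct, but it takes a genuinely different route from the paper's. The paper's proof is a one-line observation: since $V(\RR)=\{\vu,-\vu\}$, from any state the chain can only move to $\va+\vu$ or $\va-\vu$, so restricted to a stoichiometric compatibility class it is a reversible birth-and-death process on a line, and such processes are classically known to satisfy detailed balance (equivalently, the graph of the Markov chain is a path, so the Kolmogorov cycle condition holds vacuously). You instead verify the cycle condition $\UUU(U;\va)=1$ algebraically inside the paper's own formalism: the base case $(\vu,-\vu)$ cancels by Lemma~\ref{lem:propsofFFF}(i), and since any reaction cycle over the alphabet $\{\vu,-\vu\}$ with zero sum must contain an \emph{interior} adjacent pair $(\vw,-\vw)$ (a constant sequence cannot sum to $\v0$, so in fact no cyclic rotation is ever needed), you peel such pairs off by induction; the factorization of $\UUU$ is clean precisely because the deleted pair returns the partial-sum walk to its base point. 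Both arguments are sound. The paper's buys brevity and leans on standard Markov chain theory; yours buys self-containedness, makes explicit \emph{why} closed walks on a line force cancellation identically in the rate constants, and uses exactly the cancellation mechanism that the paper later deploys in Lemma~\ref{lem:setup} and Theorem~\ref{thm:linindV} (of which this theorem can in fact be viewed as a degenerate case, since $\mathrm{span}(V(\RR)\setminus\{\vu,-\vu\})=\{\v0\}$ here). The only point to tighten in a final write-up is the remark you already flag: stating precisely that the surviving factors' base points are unchanged after deletion, which follows from $\vu_j+\vu_{j+1}=\v0$.
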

\begin{proof} From any state $\va$, the set of states that can be reached in one time step is a subset of $\{ \va + \vu, \va - \vu \}$, so the process is a reversible birth and death process which is known to satisfy MCDB. 
\end{proof} 
Network 2 in Section \ref{sec:motexamples} provides an example of a network which satisfies the hypotheses of Theorem \ref{thm:b&d}. Another example is the network in \eqref{eq:onerxnatom} in Section \ref{sec:b&d}. For birth and death processes, MCDB holds because the absence of cycles in the graph of the Markov chain makes the Kolmogorov cycle conditions hold vacuously. However, as Theorem \ref{thm:linindV} shows, the mere presence of cycles is not sufficient to make MCDB equivalent to RNDB. 
\begin{theorem} \label{thm:linindV}
Consider a reversible reaction network $(\SS,\CC,\RR)$ such that for all $\vu \in V(\RR)$
\been[(a)]
\item $\vu \notin  \rm{span}(V(\RR) \setminus \{\vu, -\vu \})$. 
\item If $\vv \in  \rm{span}(V(\RR) \setminus \{\vu, -\vu \})$, then $\FFF(\vu,\va) = \FFF(\vu,\va+\vv)$.  
\enen
Then $(\SS,\CC,\RR)$ satisfies MCDB for all reaction rate constants. 
\end{theorem}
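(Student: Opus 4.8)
The plan is to verify the Markov chain detailed balance criterion \eqref{eq:MCDBcond} directly: it suffices to show $\UUU((\vu_1,\ldots,\vu_c;\va)) = 1$ for every irreducible, nontrivial, reversible reaction cycle $(\vu_1,\ldots,\vu_c)$ based at an arbitrary $\va \in \Z_{\ge 0}^s$, and this must hold for \emph{every} choice of rate constants $\vk$. Write $V(\RR) = \{\pm\vv_1,\ldots,\pm\vv_p\}$, one unordered pair $\{\vv_\ell,-\vv_\ell\}$ per reversible reaction-vector pair. Hypothesis (a) says precisely that no $\vv_\ell$ lies in the span of the remaining directions, i.e. $\{\vv_1,\ldots,\vv_p\}$ is linearly independent. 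I would then fix, for each $\ell$, a linear functional $t_\ell : \R^s \to \R$ with $t_\ell(\vv_m) = \delta_{\ell m}$; such functionals exist by linear independence. Along a cycle, consecutive states satisfy $\va_j = \va_{j-1} + \vu_j$, so $t_\ell(\va_j) - t_\ell(\va_{j-1})$ equals $+1$ when $\vu_j = \vv_\ell$, equals $-1$ when $\vu_j = -\vv_\ell$, and equals $0$ otherwise; and since $\sum_i \vu_i = \v0$ we have $t_\ell(\va_c) = t_\ell(\va_0)$, so $j \mapsto t_\ell(\va_{j})$ is a closed nearest-neighbour walk on the coset $t_\ell(\va) + \Z$.

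Next I would partition the product \eqref{eq:shortuuu} according to which direction each step uses, writing $\UUU = \prod_{\ell=1}^p \prod_{j:\,\vu_j = \pm\vv_\ell} (\text{factor}_j)$, and analyze one direction $\ell$ at a time. For a forward step $\vu_j = \vv_\ell$ the factor is $\dfrac{\vk(\vv_\ell)\cdot\FFF(\vv_\ell,\va_{j-1})}{\vk(-\vv_\ell)\cdot\FFF(\vv_\ell,\va_{j-1})}$. For a backward step $\vu_j = -\vv_\ell$, Lemma \ref{lem:propsofFFF}(i) gives $\FFF(-\vv_\ell,\va_{j-1}) = \FFF(\vv_\ell,\va_{j-1}-\vv_\ell) = \FFF(\vv_\ell,\va_j)$, so the factor becomes $\dfrac{\vk(-\vv_\ell)\cdot\FFF(\vv_\ell,\va_j)}{\vk(\vv_\ell)\cdot\FFF(\vv_\ell,\va_j)}$. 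In both cases the argument of $\FFF$ is the \emph{lower} endpoint of the step in the $t_\ell$-coordinate. Now every state occurring in the cycle differs from $\va$ by an element of $\mathrm{span}(V(\RR))$, so any two cycle-states sharing the same value of $t_\ell$ differ by a vector lying in $\mathrm{span}(V(\RR)\setminus\{\vv_\ell,-\vv_\ell\})$; hypothesis (b) then forces $\FFF(\vv_\ell,\cdot)$ to take a common value $\FFF_\ell(v)$ on all cycle-states with $t_\ell = v$. Consequently a forward step crossing the level-edge from $v$ to $v+1$ contributes $R_\ell(v) := \dfrac{\vk(\vv_\ell)\cdot\FFF_\ell(v)}{\vk(-\vv_\ell)\cdot\FFF_\ell(v)}$, while a backward step crossing the same edge contributes exactly $R_\ell(v)^{-1}$.

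Finally I would invoke closed-walk balance: for a closed nearest-neighbour walk on $\Z$, each edge $\{v,v+1\}$ is crossed upward exactly as many times as it is crossed downward. Hence in the sub-product for direction $\ell$ the contributions $R_\ell(v)$ and $R_\ell(v)^{-1}$ occur in equal numbers at each level $v$, so that sub-product equals $1$; multiplying over $\ell$ gives $\UUU((\vu_1,\ldots,\vu_c;\va)) = 1$. Since $\va$, the cycle, and $\vk$ were arbitrary, \eqref{eq:MCDBcond} holds for all rate constants and the network has MCDB.

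I expect the main obstacle to be the bookkeeping in the middle step rather than any deep idea: one must be careful that (i) hypothesis (a) is used exactly to guarantee the coordinates $t_\ell$ are well defined and are left unchanged by steps in other directions, and (ii) hypothesis (b) is used exactly to certify that the $\FFF$-values attached to a forward crossing and to the matching backward crossing of the same level-edge genuinely coincide --- this is where a network failing (b) (such as Network~4) would break the pairing. Packaging the ``equal up- and down-crossings of a closed walk'' fact cleanly, and confirming that the lower endpoints of matched forward and backward crossings really do share the same $t_\ell$-value, are the points that demand care.
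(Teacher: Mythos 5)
Your proof is correct, and it takes a genuinely different route from the paper's. The paper proves the theorem via Lemma \ref{lem:setup}: fixing one pair $\{\vu,-\vu\}$, it argues from hypothesis (a) that any irreducible cycle containing $\vu$ must also contain $-\vu$, rewrites the cycle as $(\vu,\vv,-\vu,-\vv)$ with $\vv \in \mathrm{span}(V(\RR)\setminus\{\vu,-\vu\})$, and cancels the two factors involving $\vk(\vu),\vk(-\vu)$ using Lemma \ref{lem:propsofFFF}(i) and hypothesis (b); the theorem then follows because this holds for every pair. Your cancellation mechanism is identical at its core (Lemma \ref{lem:propsofFFF}(i) converts backward-step factors, (a) guarantees that the relevant state differences lie in $\mathrm{span}(V(\RR)\setminus\{\vv_\ell,-\vv_\ell\})$, and (b) then equates the $\FFF$-values), but your bookkeeping --- dual coordinates $t_\ell$, partitioning the product \eqref{eq:shortuuu} by direction, and pairing up-crossings with down-crossings of each level-edge --- is more general: it needs neither irreducibility nor the assumption that each direction $\pm\vv_\ell$ occurs only once in the cycle. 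That assumption is silently built into the paper's reduction to the form $(\vu,\vv,-\vu,-\vv)$, and it is not automatic: for instance, with linearly independent $\vv_1,\vv_2,\vv_3$ the cycle $(\vv_1,\vv_2,\vv_1,\vv_3,-\vv_1,-\vv_2,-\vv_1,-\vv_3)$ has pairwise distinct partial sums, hence is irreducible in the paper's sense while using $\pm\vv_1$ twice each; your crossing argument covers such cycles, so it actually closes a small gap in the published proof. What the paper's formulation buys in exchange is the reusable intermediate statement of Lemma \ref{lem:setup} --- that MCDB places no constraints on $\vk(\vu)$ or $\vk(-\vu)$ individually --- which is invoked again in Theorem \ref{thm:MCDBwithoutRNDB}; if you wanted your argument to serve the same purpose, you would note that your per-direction sub-product being identically $1$ yields the same conclusion.
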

We need the following lemma in order to prove Theorem \ref{thm:linindV}. 
\begin{lemma} \label{lem:setup}
Consider a reversible reaction network $(\SS,\CC,\RR)$. Suppose that $\vu \in V(\RR)$ is a reaction vector with the following properties: 
\been[(a)]
\item $\vu \notin  \rm{span}(V(\RR) \setminus \{\vu, -\vu \})$. 
\item If $\vv \in  \rm{span}(V(\RR) \setminus \{\vu, -\vu \})$, then $\FFF(\vu,\va) = \FFF(\vu,\va+\vv)$.  
\enen
Then MCDB does not require any constraints on $\vk(\vu)$ or $\vk(-\vu)$.
\end{lemma}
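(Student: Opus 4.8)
The plan is to fix an arbitrary reversible reaction cycle $(\vu_1, \ldots, \vu_c)$ based at $\va$ and to show that in the product expression \eqref{eq:shortuuu} for $\UUU$, every factor carrying $\vk(\vu)$ or $\vk(-\vu)$ cancels against another such factor, so that $\UUU((\vu_1, \ldots, \vu_c; \va))$ reduces to a product over only the steps with $\vu_j \notin \{\vu, -\vu\}$. Since the MCDB constraints \eqref{eq:MCDBcond} are precisely the equations $\UUU = 1$ ranging over all such cycles, this shows that no MCDB constraint can involve $\vk(\vu)$ or $\vk(-\vu)$.

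First I would record a balancing fact. Because $\sum_{j=1}^c \vu_j = \v0$, writing $p$ for the number of indices $j$ with $\vu_j = \vu$ and $q$ for the number with $\vu_j = -\vu$, we get $(p-q)\vu = -\sum_{\vu_j \notin \{\vu,-\vu\}} \vu_j \in \mathrm{span}(V(\RR)\setminus\{\vu,-\vu\})$; hypothesis (a) then forces $p=q$. Next I would use hypothesis (b) to collapse the $\FFF$-arguments. For each $j$, decompose the partial sum as $\va_{j-1} = \va + n_{j-1}\vu + \vv_{j-1}$, where $n_{j-1}$ is the net count of $\pm\vu$ steps among $\vu_1,\ldots,\vu_{j-1}$ and $\vv_{j-1} \in \mathrm{span}(V(\RR)\setminus\{\vu,-\vu\})$; hypothesis (b) then yields $\FFF(\vu,\va_{j-1}) = \FFF(\vu, \va + n_{j-1}\vu)$. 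For the steps with $\vu_j = -\vu$ I would additionally invoke Lemma \ref{lem:propsofFFF}(i) in the form $\FFF(-\vu,\va_{j-1}) = \FFF(\vu, \va_{j-1}-\vu)$ and apply (b) once more to obtain $\FFF(\vu, \va + (n_{j-1}-1)\vu)$.

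The upshot is that every factor of \eqref{eq:shortuuu} coming from a $\pm\vu$ step depends on the state only through the integer height $n$. Reading the cycle as a closed walk on $\Z$ in which $\vu_j=\vu$ raises the height by one, $\vu_j=-\vu$ lowers it by one, and every other step leaves it fixed, an up-step across the edge $\{m,m+1\}$ contributes $\dfrac{\vk(\vu)\cdot\FFF(\vu,\va+m\vu)}{\vk(-\vu)\cdot\FFF(\vu,\va+m\vu)}$, whereas a down-step across the same edge contributes exactly its reciprocal. The main obstacle, and the heart of the argument, is the combinatorial claim that a closed $\pm 1$ walk on $\Z$ traverses each edge $\{m,m+1\}$ equally often upward and downward; this follows from a cut/flux count, since the walk begins and ends at the same height. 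Granting this, the $\pm\vu$ factors pair off and cancel, so $\UUU$ reduces to a product over the steps with $\vu_j \notin \{\vu,-\vu\}$, which contains neither $\vk(\vu)$ nor $\vk(-\vu)$. As this holds for every reversible reaction cycle, the MCDB equations impose no constraint on $\vk(\vu)$ or $\vk(-\vu)$.
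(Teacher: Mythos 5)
Your proof is correct, and it takes a genuinely different route from the paper's. The paper first invokes its reduction of MCDB to irreducible, nontrivial, reversible cycles, and then argues that any such cycle containing $\vu$ can be written as $(\vu, \vv, -\vu, -\vv)$ --- exactly one occurrence of $\vu$, one of $-\vu$, with the two intervening blocks summing to $\pm\vv \in \mathrm{span}(V(\RR)\setminus\{\vu,-\vu\})$ --- and then cancels that single pair of factors using Lemma \ref{lem:propsofFFF}(i) together with hypothesis (b), the same two ingredients you use. Your combinatorial core is different: you make no multiplicity assumption, instead projecting an arbitrary reversible reaction cycle onto a closed height walk on $\Z$ (closedness being exactly your $p=q$ step, which is where hypothesis (a) enters) and cancelling the $\pm\vu$ factors of \eqref{eq:shortuuu} in up/down-crossing pairs across each edge $\{m,m+1\}$. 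This buys genuine extra generality: the paper's normal form is not automatic even for irreducible cycles, since an irreducible cycle can contain $\vu$ more than once --- for instance, with $\vu=(1,0)$ and ${\vect w}=(0,1)$ (the reaction vectors of Network 4 and of the disjoint-union example), the cycle $(\vu,{\vect w},\vu,{\vect w},-\vu,-\vu,-{\vect w},-{\vect w})$ visits eight distinct states, hence is irreducible, yet uses $\vu$ and $-\vu$ twice each. Your crossing argument handles such cycles (and reducible ones) uniformly, and it also dispenses with the reduction to irreducible cycles altogether, at the cost of a somewhat longer argument; the paper's version is shorter whenever its normal form applies.
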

\begin{proof} Let $\vu$ be a reaction vector that satisfies the hypotheses of the theorem. Let $U := (\vu_1=\vu, \vu_2, \ldots, \vu_c)$ be an irreducible, nontrivial, reversible reaction cycle based at $\va$. Since $\vu \notin  \rm{span}(V(\RR) \setminus \{\vu, -\vu \})$, we must have that $-\vu \in U$, so that $U = (\vu, \vu_2, \ldots, \vu_k, -\vu, \vu_{k+2}, \ldots, \vu_c)$. Since $U$ is irreducible, $\vu$ and $-\vu$ are nonconsecutive and $\vv := \sum_{i=2}^k \vu_i$ is nonzero and in $\rm{span}(V(\RR) \setminus \{\vu, -\vu \})$. So we can write the reaction cycle $U$ as $U= (\vu, \vv, -\vu, -\vv)$, where $\vv$ is a sum of reaction vectors. 

In the expression of $\UUU(U;\va)$ the factor involving $\vk(\vu)$ and $\vk(-\vu)$ is:
\begin{align*}
 \frac{\vk(\vu) \cdot \FFF(\vu, \va)}{\vk(-\vu) \cdot \FFF(\vu, \va)} \frac{\vk(-\vu) \cdot \FFF(-\vu, \va + \vu + \vv)}{\vk(\vu) \cdot \FFF(-\vu, \va +\vu+ \vv)} 
=   \frac{\vk(\vu) \cdot \FFF(\vu, \va)}{\vk(-\vu) \cdot \FFF(\vu, \va)} \frac{\vk(-\vu) \cdot \FFF(\vu, \va + \vv)}{\vk(\vu) \cdot \FFF(\vu, \va + \vv)} = 1
\end{align*}
where the first equality follows from applying Lemma \ref{lem:propsofFFF}, and the next equality follows from $\FFF(\vu,\va) = \FFF(\vu,\va+\vv)$. Since the other factors in $\UUU(U;\va)$ do not involve either $\vk(\vu)$ or $\vk(-\vu)$, it is clear that any set of conditions for MCDB do not constrain $\vk(\vu)$ or $\vk(-\vu)$. 
\end{proof}

\begin{theorem} \label{thm:MCDBwithoutRNDB}
Consider a reversible reaction network $(\SS,\CC,\RR)$. Suppose that $\vu \in V(\RR)$ is a reaction vector with the following properties: 
\been[(a)]
\item $\vu \notin  \rm{span}(V(\RR) \setminus \{\vu, -\vu \})$. 
\item If $\vv \in  \rm{span}(V(\RR) \setminus \{\vu, -\vu \})$, then $\FFF(\vu,\va) = \FFF(\vu,\va+\vv)$.  
\item $r(\vu) >1$. 
\enen
Then there exist a set of rate constants for which the reaction network possesses MCDB but does not possess RNDB. 
\end{theorem}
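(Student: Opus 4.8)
The plan is to exhibit an explicit assignment of rate constants that satisfies the Kolmogorov cycle condition on every cycle yet violates the first family of RNDB relations. The driving observation is that, by Lemma \ref{lem:setup}, hypotheses (a) and (b) already guarantee that MCDB places no constraint whatsoever on $\vk(\vu)$ or $\vk(-\vu)$; so we are free to choose these constants to break RNDB, provided the remaining rate constants are chosen so as to keep MCDB intact. Concretely, I would fix a ``symmetric'' assignment on all other reaction vectors together with an asymmetric one on $\vu$: for every reaction vector $\vv \in V(\RR)$ with $\vv \neq \pm\vu$ set $k_i(\vv) = k_i(-\vv)$ for all $1 \le i \le r(\vv)$ (for instance, all equal to $1$), and on $\vu$, using $r(\vu) \ge 2$ from hypothesis (c), choose the constants so that
\[
\frac{k_1(\vu)}{k_1(-\vu)} \neq \frac{k_2(\vu)}{k_2(-\vu)},
\]
for example $k_1(\vu) = 2$, $k_1(-\vu) = 1$, and $k_i(\vu) = k_i(-\vu) = 1$ for all $i \ge 2$.

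With this choice RNDB fails immediately. By Theorem \ref{thm:RNDBrateconds}, reaction network detailed balance requires relation \eqref{eq:ratecond1} at every reaction vector, but our assignment makes the two displayed ratios unequal, so \eqref{eq:ratecond1} is violated at $\vu$ and the network does not possess RNDB. This is exactly where hypothesis (c) is used: if $r(\vu)=1$ the relation \eqref{eq:ratecond1} would be vacuous and could not be broken.

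It then remains to verify MCDB, for which it suffices to check that $\UUU(U;\va)=1$ on every irreducible, nontrivial, reversible reaction cycle $U=(\vu_1,\ldots,\vu_c)$ based at $\va$, using the simplified form \eqref{eq:shortuuu}. Each factor there is $\frac{\vk(\vu_j)\cdot\FFF(\vu_j,\va_{j-1})}{\vk(-\vu_j)\cdot\FFF(\vu_j,\va_{j-1})}$, where numerator and denominator share the \emph{same} vector $\FFF(\vu_j,\va_{j-1})$. For every index $j$ with $\vu_j \neq \pm\vu$ the symmetric assignment gives $\vk(\vu_j)=\vk(-\vu_j)$, so numerator and denominator coincide and that factor equals $1$; this neutralizes all such factors independently of the base points $\va_{j-1}$. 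If $U$ does not involve $\pm\vu$ we are done. Otherwise hypothesis (a) forces $-\vu$ to appear in $U$ as well, and the factors carrying $\vk(\vu)$ and $\vk(-\vu)$ contribute no net dependence on these constants and in fact multiply to $1$, precisely by the computation in the proof of Lemma \ref{lem:setup} (which invokes (a), (b) and Lemma \ref{lem:propsofFFF}). Hence $\UUU(U;\va)=1$ for every such cycle, so the Kolmogorov cycle condition holds and the network satisfies MCDB while failing RNDB.

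The only delicate point, and the one doing the real work, is the cancellation of the $\vu$-factors across an arbitrary cycle through $\vu$; but this is already isolated in Lemma \ref{lem:setup}, so here it is merely invoked. The symmetric assignment on the remaining reaction vectors is what annihilates every other factor, sidestepping any dependence on the base points, and condition (c) is exactly what makes the RNDB-breaking choice on $\vu$ available.
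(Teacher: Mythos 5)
Your proof is correct and follows essentially the same route as the paper: both rest on Lemma \ref{lem:setup} to show that MCDB places no constraint on $\vk(\vu)$ or $\vk(-\vu)$, while hypothesis (c), $r(\vu)>1$, makes the RNDB relation \eqref{eq:ratecond1} at $\vu$ a nontrivial condition that can then be violated. The only difference is that you exhibit an explicit witness (the symmetric assignment $k_i(\vv)=k_i(-\vv)$ off $\pm\vu$ together with an asymmetric choice on $\vu$), which usefully makes explicit a step the paper leaves implicit, namely that the MCDB constraints on the remaining rate constants are simultaneously satisfiable.
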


\begin{proof} By Lemma \ref{lem:setup}, $\vk(\vu)$ and $\vk(-\vu)$ are unconstrained. On the other hand, if RNDB holds then $\vk(-\vu)$ is a constant multiple of $\vk(\vu)$. Since $r(\vu) >1$, this results in a nontrivial condition on the rate constants, a condition absent from the requirements for MCDB, thus proving the theorem.
\end{proof}
In the following example, we look at a network with two reaction vector pairs $(\pm 1,0)$ and $(0, \pm 1)$. The conditions of theorem \ref{thm:MCDBwithoutRNDB} hold for $(\pm 1,0)$ but not for $(0, \pm 1)$, and so we find that the MCDB constraints form a proper subset of RNDB constraints. 
\begin{example}
Consider the following example Network 4 from section \ref{sec:motexamples}. 
\begin{align*}
0 \stackrel[k_{-1}]{k_1}{\rlas} A \quad, \quad 2A \stackrel[k_{-2}]{k_2}{\rlas} 3A  \quad, \quad
A \stackrel[k_{-3}]{k_3}{\rlas} A+B \quad, \quad 2B \stackrel[k_{-4}]{k_4}{\rlas} 3B
\end{align*}
Let $\vu = (1,0)$ and $\vv=(0,1)$. It is easy to show that $\FFF(\vu,\va) = \left< 1, a(a-1)\right>$, so that $\FFF(\vu,\va) = \FFF(\vu,\va+\vv)$. Clearly, $\vu$ and $\vv$ are linearly independent, so $\vu$ satisfies the hypotheses of Theorem \ref{thm:MCDBwithoutRNDB}, and thus MCDB does not require any constraints on $\vk(\vu) = \{k_1, k_2\}$ and $\vk(-\vu) = \{k_{-1}, k_{-2}\}$. RNDB does require that $\frac{k_1}{k_{-1}} = \frac{k_2}{k_{-2}}$. Note however that $\FFF(\vv,\va) = \left< a, b(b-1)\right>$, and so $\FFF(\vv,\va) \ne \FFF(\vv,\va + \vu)$. Thus MCDB requires that $\frac{\vk(\vv)}{k_1(\vu)} = \frac{\vk(-\vv)}{k_1(-\vu)}$ or $\frac{k_3}{k_4} = \frac{k_{-3}}{k_{-4}}$ which is also necessary for RNDB. 
\end{example}
Now we are ready to prove Theorem \ref{thm:linindV}. 
\begin{proof}[Proof of Theorem \ref{thm:linindV}] Let $\vu \in V(\RR)$. Both hypotheses of Lemma \ref{lem:setup} are satisfied for $\vu$, thus implying that there are no constraints on $\vk(\vu)$ or $\vk(-\vu)$. Since this is true for all reaction vectors $\vu \in V(\RR)$, in fact no conditions are required on the rate constants for MCDB. 
\end{proof}

The following example illustrates the content of Theorem \ref{thm:linindV} and Theorem \ref{thm:MCDBwithoutRNDB} that having cycles in the graph of the MC is not sufficient to make RNDB and MCDB equivalent. 
\begin{example}
Consider a ``disjoint union" of two networks: 
\ba 
0 \stackrel[k_{-1}]{k_1}{\rlas} A \quad, \quad 2A \stackrel[k_{-2}]{k_2}{\rlas} 3A \quad, \quad 0 \stackrel[k_{-3}]{k_3}{\rlas} B \quad, \quad 2B \stackrel[k_{-4}]{k_4}{\rlas} 3B
\ea
Notice that the network decouples into a subnetwork involving only species $A$ and a subnetwork involving only species $B$. (The shared $0$ does not count as a species.) The network is easily shown to satisfy the hypotheses of Theorem \ref{thm:linindV}, thus the network satisfies MCDB for all reaction rate constants. However, RNDB is satisfied only when certain constraints on the rate parameters are satisfied.
\end{example}
\section{Applications} \label{sec:applications}

\subsection{A new algorithm for determining MCDB conditions}

Since the conditions on rate constants for RNDB guarantee MCDB, there exists a set of conditions for MCDB which is a subset of the conditions for RNDB. For a complicated reaction network, there may be a huge number of cycle types in the graph of the corresponding Markov chain, which means that finding the complete set of conditions for MCDB may be nontrivial. However, we can first determine the conditions for RNDB, which is a significantly simpler exercise, and then use these conditions to circumscribe the set of conditions for MCDB. 

As a demonstration of this technique, consider the CRN presented in figure \ref{fig:crn_ex} of section \ref{sec:introduction}. We were quite easily able to determine the conditions for RNDB given in equations \eqref{eq:crn_ex_cons1} and \eqref{eq:crn_ex_cons2}. Thus we know that the conditions for MCDB is a subset of these two conditions. By applying Kolmogorov cycle condition to the two cycle types depicted in figure \ref{fig:crn_ex_mc}, we quickly rediscover the two conditions in equations \eqref{eq:crn_ex_cons1} and \eqref{eq:crn_ex_cons2}. Even though we have not examined all possible cycle types in the MC, we know we have obtained a complete list of MCDB conditions because MCDB cannot have more constraints than RNDB. 

The observation in the previous paragraphs allows us to state the following algorithm for determining the conditions on the rate constants for MCDB. Let $(\SS,\CC,\RR)$ be a reversible reaction network. We will represent the set of constraints on $(\SS,\CC,\RR)$ for RNDB by $\TD$. 
\paragraph{Algorithm for determining the set of constraints on the rate constants that result in detailed balance in the MC model of the CRN:} ~\\

\noindent {\bf Input:} A reversible chemical reaction network $(\SS, \CC, \RR)$ with mass-action kinetics.

\noindent {\bf Output:} A set of conditions on reaction rate constants of $(\SS, \CC, \RR)$ which results in MCDB. 
\been
\item There exists a set of constraints for MCDB which is contained in $\TD$. Denote this set by $\TS \subseteq \TD$. The next steps construct $\TS$. 
\item Assume RNDB holds on the CRN and obtain all constraints on the rate constants. This will give our initial set $S_0 = \TD$. Let $T_0 = \emptyset$ initially. We will successively select elements of $S_0$ and either discard them or move them to $T_0$. The algorithm terminates when $S_0$ is empty. 
\item If $S_0$ is empty then go to step 6. Otherwise, let $z \in S_0$. $z$ is a relation involving a subset $\vk_z$ of rate constants $\vk = \cup_{\vu \in V(\RR)} \vk({\vu})$. 
\item Let $C_z$ be the set of irreducible, nontrivial, reversible cycle types in the graph of the MC that involves all the rate constants in $\vk_z$.  Let $L_0 = C_z$. 
\item If $L_0$ is empty, then discard $z$ from $S_0$ and go to step 3. Otherwise, select a cycle type from $L_0$, and apply Kolmogorov cycle condition (KCC) to this cycle. If this results in the constraint $z$, then move $z$ from $S_0$ to $T_0$ and go to step 3. If not, then discard $z$ and go back to the beginning of this step. 
\item Let $\TS = T_0$. 
\enen

\subsection{Stationary distribution of a network with RNDB} \label{sec:statdist}

Theorem 4.1 in \cite{anderson2010product} provides the stationary distribution of the Markov chain arising from a complex balanced reaction network (see Definition \ref{def:complex-balance}), where the explicit formula for the stationary distribution is in terms of the steady state of the corresponding deterministic model. A detailed balanced CRN is also complex balanced and thus the above-mentioned formula provides the stationary distribution of the Markov chain arising from a detailed balanced CRN as well. However, detailed balance allows a further simplification -- we are not required to find a steady state of the deterministic system. 

For a reaction vector $\vu \in V(\RR)$, let $k(\vu) := \frac{k_1(\vu)}{k_{1}(-\vu)}$. Then for a reversible network with RNDB, we have that $(\vx^*)^\vu = k(\vu) = \frac{k_1(\vu)}{k_{1}(-\vu)} = \frac{k_1(\vu)}{k_{1}(-\vu)} = \ldots = \frac{k_{r(\vu)}(\vu)}{k_{r(\vu)}(-\vu)}$. Let $V(\RR) := \{\vu_1, \vu_2, \ldots, \vu_v \}$. Let $\vk = (k(\vu_1), k(\vu_2), \ldots, k(\vu_v))$. For $t \ge 0$ if $\va(t)$ is the state of the Markov chain at time $t$, then there exists an ordered set $\valpha = (\alpha_1, \alpha_2, \ldots, \alpha_v)   \in \Z^v$ such that $\va(t) = \va(0) + \sum_{i=1}^v \alpha_i \vu_i$. For a pair of states $(\va, \vb)$ define $\rm{comp}_\vu(\va; \vb) :=  \valpha = (\alpha_1, \alpha_2, \ldots, \alpha_v) \in \Z^v$ to be any ordered set which satisfies $\va - \vb = \sum_{i=1}^v \alpha_i \vu_i$. Note that no uniqueness is being claimed for $\rm{comp}_\vu(\va; \vb)$.

For a state $\va$ in the Markov chain, let $\Lambda_\va$ be the set of states that communicate with $\va$. Let $\va_0 \in \Lambda_\va$ be an arbitrary reference state. Then we will show that the stationary distribution $\pi$ is given by 
\ba
\pi(\va) \propto \frac{1}{\va!} \vk^{\rm{comp}_\vu(\va; \va_0)}
\ea

The stationary distribution can be directly constructed from the reaction network detailed balance property of the chemical reaction network. Alternatively, one can use the following theorem of Anderson {\it et al}. (Theorem 4.1 in \cite{anderson2010product}). 

\begin{theorem}[Anderson et al. (2010)] \label{thm:anderson} Let $(\SS,\CC,\RR)$ be a chemical reaction network endowed with mass action kinetics. Suppose the deterministic system is complex balanced with a complex-balanced equilibrium at $\vx^* \in \R_{>0}^s$. For a communicating class of states containing the initial state $\va_0$, the stationary distribution $\pi$ in the corresponding stochastic model is given by
\ba
\pi(\va) \propto \frac{(\vx^*)^\va}{\va!}~, \quad \quad \va \in \Lambda_{\va_0}.
\ea
\end{theorem}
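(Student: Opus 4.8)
The statement to be verified is that the product-form measure $\pi(\va) \propto (\vx^*)^\va / \va!$ solves the stationary (global balance) equations for the continuous-time Markov chain defined by stochastic mass-action kinetics \eqref{eq:stotrans}. The plan is to reproduce the Anderson--Craciun--Kurtz verification: write the balance at a generic state $\va$ by summing over the individual reactions $(\vy \to \vy') \in \RR$, noting that the inflow into $\va$ arrives from states $\va - (\vy'-\vy) = \va + \vy - \vy'$, so that the condition to establish is
\[
\sum_{(\vy \to \vy') \in \RR} k_{\vy \to \vy'}\, (\va + \vy - \vy')_\vy\, \pi(\va + \vy - \vy') \;=\; \pi(\va) \sum_{(\vy \to \vy') \in \RR} k_{\vy \to \vy'}\, (\va)_\vy .
\]
Since the stoichiometric compatibility class $\Lambda_{\va_0}$ is irreducible (for a reversible network, as noted earlier; in general one works on the relevant communicating class), a nonnegative summable solution of this balance is unique up to normalization, so it suffices to check that the ansatz satisfies the displayed identity term by term after regrouping.

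First I would substitute the ansatz and clear the factorials using the elementary identity $(\va)_\vy = \va! / (\va - \vy)!$ valid for $\va \ge \vy$, a special case of Property 1 of Lemma \ref{lem:calcs}. Each outflow term then collapses to $k_{\vy \to \vy'} (\vx^*)^\va / (\va - \vy)!$, while in each inflow term the factorial in $(\va + \vy - \vy')_\vy = (\va + \vy - \vy')!/(\va - \vy')!$ cancels the denominator of $\pi(\va + \vy - \vy')$, leaving $k_{\vy \to \vy'} (\vx^*)^{\va + \vy - \vy'}/(\va - \vy')!$. Factoring $(\vx^*)^\vy$ out of both and writing $g(\vy_0) := (\vx^*)^{\va - \vy_0}/(\va - \vy_0)!$ for a complex $\vy_0$, with the convention $g(\vy_0) = 0$ whenever $\va \not\ge \vy_0$ (which is exactly when the corresponding mass-action rate $(\va)_{\vy_0}$ vanishes), the outflow term becomes $k_{\vy \to \vy'} (\vx^*)^\vy g(\vy)$ and the inflow term becomes $k_{\vy \to \vy'} (\vx^*)^\vy g(\vy')$. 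I expect this step to be the main obstacle: the zero convention is what makes the boundary states behave correctly, and the genuine care is in confirming that every contribution suppressed by a vanishing falling factorial matches precisely a term where $g$ has been declared zero, so that the reorganization that follows is valid on all of $\Lambda_{\va_0}$.

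The final step is to regroup the two sums by complex rather than by reaction: collecting the outflow sum over all reactions with a fixed \emph{reactant} complex and the inflow sum over all reactions with a fixed \emph{product} complex, the balance reduces to
\[
\sum_{\vy_0 \in \CC} g(\vy_0) \left[ \sum_{\vy : (\vy \to \vy_0) \in \RR} k_{\vy \to \vy_0} (\vx^*)^\vy \;-\; (\vx^*)^{\vy_0} \sum_{\vy' : (\vy_0 \to \vy') \in \RR} k_{\vy_0 \to \vy'} \right] = 0 .
\]
Each bracketed expression is exactly the difference of the two sides of the mass-action complex balance identity at the complex $\vy_0$ (the displayed equation following Definition \ref{def:complex-balance}), which vanishes by the complex-balanced hypothesis; hence every summand is zero and the balance holds. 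To conclude I would record that $\sum_{\va \in \Lambda_{\va_0}} (\vx^*)^\va/\va!$ is summable (it is dominated by $\prod_i e^{x_i^*} < \infty$), which furnishes the normalizing constant, and then invoke uniqueness of the stationary distribution on the irreducible class. Once the sum is reorganized by complex the complex balance hypothesis finishes the argument immediately, so the only delicate point remains the falling-factorial simplification together with the alignment of boundary terms against the zero convention.
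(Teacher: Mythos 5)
The paper does not prove this theorem at all --- it is imported verbatim as Theorem 4.1 of \cite{anderson2010product} and used as a black box in the proof of Theorem \ref{thm:RNDBstatdist} --- so the only meaningful comparison is against the cited Anderson--Craciun--Kurtz argument, and your proposal is a correct reconstruction of exactly that argument. The three steps you identify are the right ones and are carried out correctly: (i) the global balance equation with inflow from $\va + \vy - \vy'$ at rate $k_{\vy\to\vy'}(\va+\vy-\vy')_\vy$; (ii) the factorial cancellation $(\va)_\vy = \va!/(\va-\vy)!$ (indeed recoverable from Property 1 of Lemma \ref{lem:calcs}), which turns outflow terms into $k_{\vy\to\vy'}(\vx^*)^\vy g(\vy)$ and inflow terms into $k_{\vy\to\vy'}(\vx^*)^\vy g(\vy')$ with $g(\vy_0) = (\vx^*)^{\va-\vy_0}/(\va-\vy_0)!$; and (iii) the regrouping of the sum by complex rather than by reaction, after which each bracket is precisely the mass-action complex-balance identity at $\vx^*$ displayed after Definition \ref{def:complex-balance} and hence vanishes. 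Your attention to the boundary convention $g(\vy_0)=0$ when $\va \not\ge \vy_0$ is also well placed: it is consistent because $(\va)_{\vy_0}=0$ exactly in that case, so suppressed terms on both sides match, and the normalizability bound $\sum_{\va}(\vx^*)^\va/\va! \le \prod_i e^{x_i^*}$ is correct. The one point where you are slightly glib is the closing appeal to uniqueness: for a countable-state continuous-time chain, a normalized solution of the balance equations is the stationary distribution of the process only when the chain is regular (non-explosive), a fact that is not automatic and is established for complex-balanced networks in later literature rather than in the verification itself; since the theorem as cited here takes that for granted, this is a caveat rather than a gap in your reconstruction.
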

When the network satisfies RNDB, the stationary distribution can be written directly in terms of the stochastic model, and does not necessitate finding a steady state solution of the corresponding deterministic system. 

\begin{theorem} \label{thm:RNDBstatdist}
Let $(\SS,\CC,\RR)$ be a reversible chemical reaction network endowed with mass action kinetics and a fixed choice of rate parameters for which the system possesses reaction network detailed balance. For $1 \le i \le v$, let $k(\vu_i) = \frac{k_1(\vu_i)}{k_1(-\vu_i)}$ and let $\vk = (k(\vu_1), k(\vu_2), \ldots, k(\vu_v))$. For a communicating class of states containing the initial state $\va_0$, let $\rm{comp}_\vu(\va; \va_0) = \{\alpha_i \in \Z | 1 \le i \le v \}$ be such that $\va - \va_0 = \sum_{i=1}^v \alpha_i \vu_i$. Then the stationary distribution is given by
\ba \label{eq:RNDBstatdist}
\pi(\va) \propto \frac{1}{\va!} \vk^{\rm{comp}_\vu(\va; \va_0)} = \frac{1}{\va!} \prod_{i=1}^v k(\vu_i)^{\alpha_i}
\ea
\end{theorem}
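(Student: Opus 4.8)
The plan is to derive the formula from Anderson et al.'s product-form result, Theorem \ref{thm:anderson}, which already supplies stationarity, and then to rewrite its exponential factor $(\vx^*)^\va$ in terms of the rate-constant ratios $k(\vu_i)$. First I would invoke the RNDB hypothesis to produce a positive detailed balanced equilibrium $\vx^* \in \R_{>0}^s$: the rate conditions \eqref{eq:ratecond1}--\eqref{eq:ratecond2} of Theorem \ref{thm:RNDBrateconds} hold, guaranteeing existence of such $\vx^*$, and equation \eqref{eq:RNDBeq} then gives the crucial identity $(\vx^*)^{\vu_i} = k_1(\vu_i)/k_1(-\vu_i) = k(\vu_i)$ for every reaction vector $\vu_i \in V(\RR)$. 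Since detailed balance implies complex balance (Definition \ref{def:complex-balance}), $\vx^*$ is a complex-balanced equilibrium, so Theorem \ref{thm:anderson} applies and yields $\pi(\va) \propto (\vx^*)^\va/\va!$ on the communicating class $\Lambda_{\va_0}$.

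The core computation is then a one-line reindexing. For $\va \in \Lambda_{\va_0}$ write $\va - \va_0 = \sum_{i=1}^v \alpha_i \vu_i$ with $\valpha = \mathrm{comp}_\vu(\va;\va_0)$. Multiplicativity of the monomial map together with the identity above gives
\[
(\vx^*)^{\va} = (\vx^*)^{\va_0}\,(\vx^*)^{\va-\va_0} = (\vx^*)^{\va_0}\prod_{i=1}^v \big((\vx^*)^{\vu_i}\big)^{\alpha_i} = (\vx^*)^{\va_0}\prod_{i=1}^v k(\vu_i)^{\alpha_i}.
\]
Because $\va_0$ is fixed on the communicating class, the factor $(\vx^*)^{\va_0}$ is a constant and is absorbed into the proportionality, leaving exactly \eqref{eq:RNDBstatdist}. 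A pleasant feature of routing through $\vx^*$ is that well-definedness is automatic: the quantity $(\vx^*)^{\va-\va_0}$ depends only on the vector $\va-\va_0$ and not on the chosen decomposition $\valpha$, so the displayed identity shows that $\prod_{i=1}^v k(\vu_i)^{\alpha_i}$ is independent of the (non-unique) choice of $\mathrm{comp}_\vu(\va;\va_0)$.

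I would also record the alternative ``direct'' verification, which constructs $\pi$ without reference to the deterministic model and is where the real structural content sits. Taking the candidate $\mu(\va) := \frac{1}{\va!}\prod_{i=1}^v k(\vu_i)^{\alpha_i}$, one checks the Markov-chain detailed balance equation $\mu(\va)\rho(\va,\va+\vu) = \mu(\va+\vu)\rho(\va+\vu,\va)$ across a single transition $\va \to \va+\vu$. Using $\mu(\va+\vu)/\mu(\va) = k(\vu)\,\va!/(\va+\vu)!$, the RNDB relation $k_i(-\vu) = k_i(\vu)/k(\vu)$, Lemma \ref{lem:back} (which gives $\vy_i(-\vu) = \vy_i(\vu)+\vu$), and Property 2 of Lemma \ref{lem:calcs} (which gives $(\va)_{\vy_i(\vu)} = \frac{\va!}{(\va+\vu)!}(\va+\vu)_{\vy_i(\vu)+\vu}$), both sides collapse to $\sum_i k_i(\vu)(\va)_{\vy_i(\vu)} = \rho(\va,\va+\vu)$; here stationarity is inherited from detailed balance, consistent with Theorem \ref{thm:main}. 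The only genuine obstacle in either route is confirming that the formula \eqref{eq:RNDBstatdist} is well defined, i.e. that it does not depend on the choice of $\valpha$; this is precisely where the circuit condition \eqref{eq:ratecond2} is needed, since any two decompositions differ by an integer relation $\sum_i \beta_i \vu_i = \v0$, which is a reaction cycle and hence forces $\prod_i k(\vu_i)^{\beta_i} = 1$.
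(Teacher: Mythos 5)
Your main argument is exactly the paper's proof: invoke RNDB $\Rightarrow$ complex balance, apply Theorem \ref{thm:anderson} to get $\pi(\va) \propto (\vx^*)^{\va}/\va!$, and then rewrite $(\vx^*)^{\va-\va_0}$ as $\prod_i k(\vu_i)^{\alpha_i}$ via the identity $(\vx^*)^{\vu_i} = k(\vu_i)$ from \eqref{eq:RNDBeq}, absorbing the constant $(\vx^*)^{\va_0}$ into the normalization. Your two supplements are correct and go slightly beyond the paper: (i) the paper handles independence of the decomposition $\valpha$ by appealing to uniqueness of $\pi$, whereas you observe directly that $\prod_i k(\vu_i)^{\alpha_i} = (\vx^*)^{\va-\va_0}$ depends only on $\va-\va_0$ (equivalently, that any integer relation $\sum_i \beta_i \vu_i = \v0$ is a reaction cycle so the circuit condition \eqref{eq:ratecond2} forces $\prod_i k(\vu_i)^{\beta_i}=1$) --- a cleaner and self-contained justification; (ii) your direct verification of $\mu(\va)\rho(\va,\va+\vu) = \mu(\va+\vu)\rho(\va+\vu,\va)$ using \eqref{eq:ratecond1}, Lemma \ref{lem:back} and Property 2 of Lemma \ref{lem:calcs} is the construction the paper alludes to (``the stationary distribution can be directly constructed from the reaction network detailed balance property'') but never carries out; it checks out and has the advantage of bypassing Anderson et al.\ entirely, at the cost of needing the well-definedness of $\mu$ up front rather than getting it for free from $\vx^*$.
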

\begin{proof} Since the system is reaction network detailed balanced, it is complex balanced and so by Theorem \ref{thm:anderson} if $\vx^*$ is an equilibrium of the deterministic system, then for $\va \in \Lambda_{\va_0}$
\ba
\pi(\va) \propto \frac{(\vx^*)^\va}{\va!} = \frac{(\vx^*)^{\va_0 + \sum_{i=1}^v \alpha_i \vu_i}}{\va!} \propto \frac{(\vx^*)^{\sum_{i=1}^v \alpha_i \vu_i}}{\va!}. 
\ea
Reaction network detailed balance implies that for $\vu \in V(\RR)$, $(\vx^*)^\vu = k(\vu)$, and so 
\ba
\pi(\va) \propto  \frac{1}{\va!} \prod_{i=1}^v ((\vx^*)^{\vu_i})^{\alpha_i} = \frac{1}{\va!} \prod_{i=1}^v k(\vu_i)^{\alpha_i}. 
\ea
Uniqueness of $\pi$ implies that the solution is independent of the choice of the sequence $(\alpha_i | 1 \le i \le s)$, and of the reference state $\va_0$. 
\end{proof}

\paragraph{Examples} \label{sec:examples}
\begin{enumerate}

\item Consider the network presented in figure \ref{fig:horn_ex}, which was studied by Horn and Jackson \cite{horn1972general}, and by Feinberg \cite{feinberg1989necessary}. 
\begin{figure}[h!]
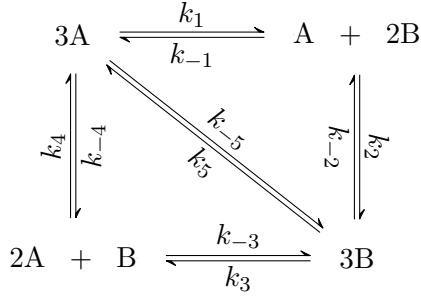

\centering
 \schemestart
 3A
 \arrow(1--2){<=>[$k_{1}$][$k_{-1}$]}
A \+ 2B  
 \arrow(2--3){<=>[$k_2$][$k_{-2}$]}[-90]  
3B
\arrow(3--4){<=>[$k_{-3}$][$k_{3}$]}[180] 
2A \+ B
\arrow(4--1){<=>[$k_4$][$k_{-4}$]}[90]  
 \arrow(@1--@3){<=>[$k_{-5}$][$k_{5}$]}[-45]
\schemestop
\caption{Network studied by Horn and Jackson in \cite{horn1972general} and Feinberg in \cite{feinberg1989necessary}} \label{fig:horn_ex}
\end{figure}
Reaction network detailed balance requires that for some $k >0$, the following conditions should hold on the rate constants:
\begin{align*}
\left(\frac{k_1}{k_{-1}}, \frac{k_2}{k_{-2}}, \frac{k_3}{k_{-3}}, \frac{k_4}{k_{-4}}, \frac{k_5}{k_{-5}}\right)  = \left(k^2, k, \frac{1}{k^2} , \frac{1}{k} , \frac{1}{k^3} \right)
\end{align*}
It is easy to see that if $\va(t) = (a(t),b(t))$ represents the state of the system at time $t$, $a(t)+b(t)$ is independent of $t$. Let $(0,a_0)$ be a reference state in the communicating class containing $(a,a_0 -a)$ for $0 \le a \le a_0$. The reaction vectors for this system are $\vu_1=(-1,1)$ and $\vu_2=(1,-1)$. We wish to find $(\alpha_1,\alpha_2)$ such that $\alpha_1 \vu_1 + \alpha_2 \vu_2 = (a,a_0-a) - (0,a_0) = (a,-a) = a (1,-1)$. One possible choice is $(\alpha_1,\alpha_2) = (0,a)$. So by Theorem \ref{thm:RNDBstatdist},  
\ba
\pi((a,a_0-a)) &\propto \frac{1}{a! (a_0 - a)!} \left(\frac{1}{k}\right)^a
\mbox{, or upon normalization } \nonumber\\
\pi((a,a_0-a)) &= \left(\frac{k}{1+k}\right)^{a_0} \left(\frac{1}{k}\right)^a {a_0 \choose a}
\ea
\item The following is a well-known model of the phosphofructokinase reaction as part of glycolysis cycle and is adapted from Gatermann et al. \cite{gatermann2005toric}. 
\ba
B \stackrel[k_{2}]{k_{-2}}{\rlas}  0 \stackrel[k_{-1}]{k_1}{\rlas} A \stackrel[k_{-3}]{k_3}{\rlas} C \quad, \quad 2A + B  \stackrel[k_{-4}]{k_4}{\rlas} 3A  
\ea
Since $r(\vu) =1$ for all $\vu \in V(\RR)$, by Theorem \ref{thm:oneveconerxn} this network has RNDB if and only if it has MCDB. Let $\vu_1 := (1,0,0)$, $\vu_2 := (0,1,0)$, $\vu_3 := (-1,0,1)$, and $\vu_4 := (1,-1,0)$, then it is easy to see that the only nontrivial reaction cycle is $(-\vu_1, \vu_2, \vu_4)$. So RNDB requires that $k_{-1} k_2 k_4 = k_{1} k_{-2} k_{-4}$. $\Z_{\ge 0}^3$ forms a single communicating class. Applying Theorem \ref{thm:RNDBstatdist}, we find that 
\ba
\pi((a,b,c)) = e^{-n_1} e^{-n_2} e^{-n_3}  \frac{n_1^a}{a!} \frac{n_2^b}{b!} \frac{n_3^c}{c!}
\ea
where $n_1 = \frac{k_1}{k_{-1}}$, $n_2 = \frac{k_2}{k_{-2}}$, and $n_3 = \frac{k_1}{k_{-1}}\frac{k_3}{k_{-3}}$. 
\item As a final example, we calculate the stationary distribution of the Markov chain arising from the CRN in figure \ref{fig:crn_ex}. This reversible CRN has 5 species, 6 complexes and 5 pairs of reversible reactions. Let ${\vect x} = (a,b,c,d,e) \in \Z_{\ge 0}^5$ be the state of the system. The stoichiometric subspace is the linear span of the following set of reaction vectors: $\{(-1,1,0,0,0),(0,1,0,-1,-1),(1,-1,1,0,0) \}$. Elementary linear algebra provides two conserved quantities, $a+b+d = l_1$ and  $a+b+e = l_2$. So a positive stoichiometric compatibility class is specified by two positive numbers $l_1$ and $l_2$. We write ${\vect x} = (a,b,c,l_1 -a -b ,l_2 - a -b)$ and take the reference state to be ${\vect x}_0 := (0,0,0,l_1 ,l_2 )$ and so 
\[
{\vect x} - {\vect x}_0 = (a,b,c,-a-b,-a-b) = (c-a)(-1,1,0,0,0) + (a+b)(0,1,0,-1,-1) + c (1,-1,1,0,0).
\]
 Thus we find the stationary distribution to be:
\ba
\pi({\vect x}) \propto \frac{1}{a! b! c! (l_1-a-b)! (l_2-a-b)!} \left(\frac{k_4 k_{-1}}{k_{-4} k_1}\right)^{a} \left(\frac{k_4}{k_{-4}}\right)^{b} \left(\frac{k_1 k_5}{k_{-1}k_{-5}}\right)^{c}
\ea
\end{enumerate}

\subsection*{Appendix: Birth and death processes} \label{sec:b&d}
This section is a slight digression, we study an example of a birth and death process arising from a chemical reaction network. Generically such a system has MCDB but not RNDB. If the deterministic system has multiple stable steady states, and if the stochastic system has a stationary distribution then we may expect it to be a bimodal distribution.  

When $V(\RR) = \{\vu, -\vu\}$ and $r(\vu) \ge 2$ the network, in general, has MCDB (as shown in Theorem \ref{thm:b&d}) but not RNDB. Satisfying MCDB does not require any constraints on the rate parameters, however RNDB involves at least one condition, and therefore at least one less degree of freedom in the available parameter space.  Network 2 in Section \ref{sec:motexamples} provides one example. Another is given by the following network:
\ba \label{eq:onerxnatom}
0 \stackrel[k_{-1}]{k_1}{\rlas} A \quad, \quad 2A \stackrel[k_{-2}]{k_2}{\rlas} 3A
\ea
The network in \eqref{eq:onerxnatom} is notable because it is (in a sense described below) the simplest example of a fully open network with multiple positive mass-action stable steady states. See for instance \cite{joshi2013complete,joshi2012atoms}, where a close relative of \eqref{eq:onerxnatom} -- obtained by making the reaction $2A \to 3A$ irreversible -- is studied and shown to be the ``{\em smallest atom of multistationarity}". A fully open network is a chemical reaction network where every chemical species is in inflow and outflow, thus for every species $A$ in the network a reaction of the type $0 \rlas A$ is included in the network. An atom of multistationarity is a fully open network which has the property of possessing multiple positive steady states (for some positive parameter values) and is a minimal network with respect to this property in the sense that if a single species or a single reaction is removed then the network loses multistationarity. The network \eqref{eq:onerxnatom} is a minimal element within the class of fully open networks which admit multiple positive steady states that are {\em stable} (see \cite{joshi2014survey}). We extend the notion of {\em atom of multistationarity} to that of {\em atom of multistability}, to be a network which admits multiple stable steady states and is minimal with respect to possessing this property.

It is a fairly simple exercise to find reaction rate constants for which \eqref{eq:onerxnatom} does not possess RNDB and has multiple positive stable steady states. For instance, consider the rate constants below:
\ba \label{eq:onerxnatom_example}
0 \stackrel[3.335]{13.5}{\rlas} A \quad, \quad 2A \stackrel[0.001]{0.132}{\rlas} 3A
\ea
The network above possesses two stable steady states at $a_1 = 5$ and $a_3 = 100$ and an unstable steady state at $a_2 = 27$. The stationary distribution is easily calculated to be 
\ba \label{eq:statdistonerxn}
\pi(a) = \frac{\Gamma}{a!} \prod_{i=0}^{a-1} \frac{k_{1}+k_{2}i(i-1)}{k_{-1}+k_{-2}i(i-1)} \quad, \quad a \in \Z_{\ge 0}
\ea
where $\Gamma$ is the normalization constant. Figure \ref{fig:statdistonerxn} depicts a graph of the equation in \eqref{eq:statdistonerxn}. The stationary distribution is seen to be bimodal with the two modes located approximately at the stable steady states of the deterministic system. 
\begin{figure}[H] 
\center{\includegraphics[scale=0.6]{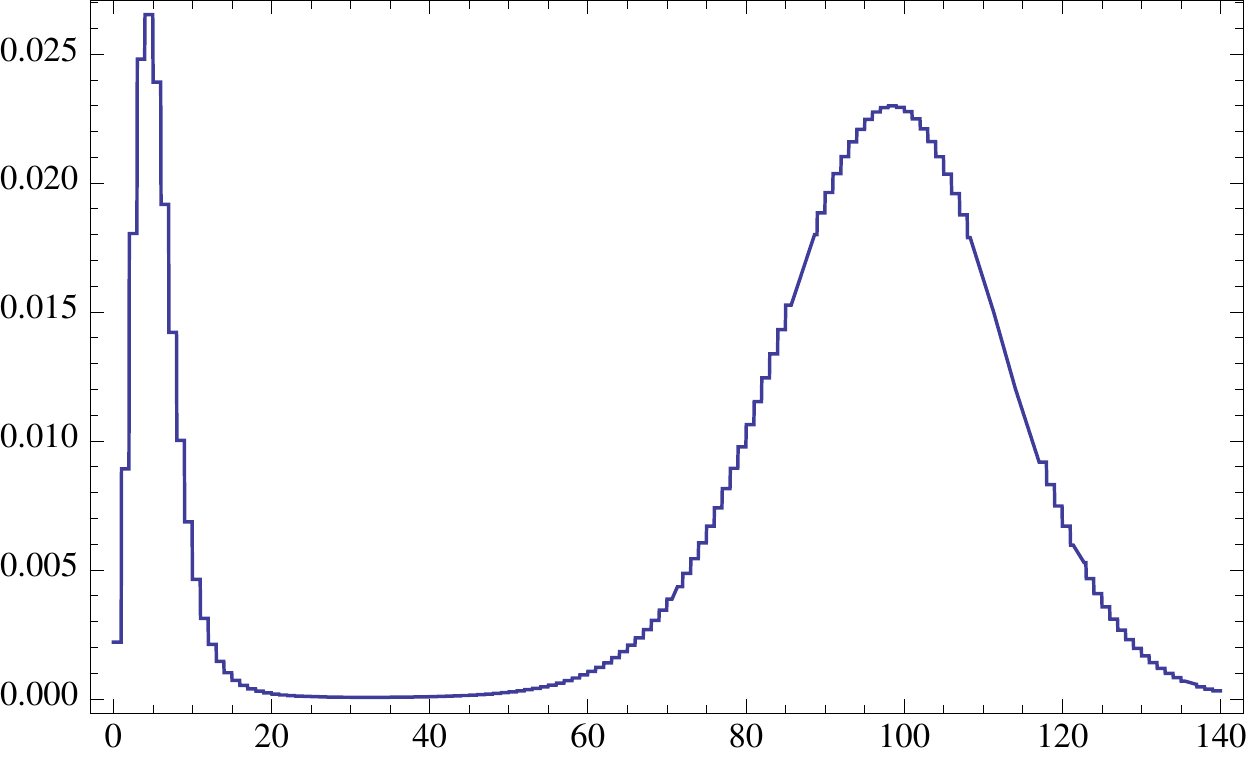}}
\caption{Plot of the stationary distribution for the network in \eqref{eq:onerxnatom_example}}
\label{fig:statdistonerxn}
\end{figure}

\subsection*{Acknowledgments}
I am extremely grateful to David Anderson for insightful comments and discussions, and pointing me to the relevant references. I would also like to thank the organizers of the American Institute of Mathematics (AIM) workshop on {\it Mathematical problems arising from biochemical reaction networks} -- Alicia Dickenstein, Jeremy Gunawardena, and Anne Shiu. The project of this paper originated from the stimulating discussions at the AIM workshop. I would like to thank the two anonymous referees for a careful reading and valuable suggestions for improvement of the manuscript. 

\bibliographystyle{amsplain}
\bibliography{detbal}

\end{document}